\documentclass{amsart}
\usepackage{systeme}
\usepackage{fullpage}
\usepackage{fontenc}
\usepackage{lmodern}
\usepackage{amsfonts, amsmath, amssymb, amscd, amsthm}
\usepackage{color}
\usepackage{subfig}
\usepackage{rotate,float}
\usepackage{fancybox,graphics,rotating}
\newcommand{\R}{\mathbb R}
\def\supp{\mathop{\rm supp}\nolimits}
\def\H{{\mathcal H}}
\def\Z{{\mathbb Z}}
\def\M{{\mathcal M}}
\newcommand{\N}{\mathbb N}

\newcommand{\dsp}{\displaystyle}
\newcommand{\eps}{\varepsilon}
\newtheorem{proposition}{Proposition}
\newtheorem{theorem}{Theorem}

\newtheorem{corollary}{Corollary}
\newtheorem{lemma}{Lemma}

\newtheorem{hypothesis}{Hypothesis}
\newcommand{\weaktendsto}[1]{\renewcommand{\arraystretch}{0.5}
\begin{array}[t]{c}
\rightharpoonup \\
{ \scriptstyle #1 }
\end{array}
\renewcommand{\arraystretch}{1}}

\title{\Large{\bf{On the orbital stability of  the Degasperis-Procesi antipeakon-peakon profile}}}
\date{\today}
\begin{document}
\author[B. Khorbatly and L. Molinet]{Bashar Khorbatly and Luc Molinet}
\address{Bashar Khorbatly, Laboratory of Mathematics-EDST, Department of Mathematics, Faculty of Sciences 1, Lebanese University, Beirut, Lebanon  and \\
 Institut Denis Poisson, Universit\'e de Tours, Universit\'e d'Orl\'eans, CNRS,  Parc Grandmont, 37200 Tours, France.}
\email{Bashar-elkhorbatly@hotmail.com}
\address{Luc Molinet, Institut Denis Poisson, Universit\'e de Tours, Universit\'e d'Orl\'eans, CNRS,  Parc Grandmont, 37200 Tours, France.}
\email{Luc.Molinet@univ-tours.fr}
\subjclass[2010]{35Q35,35Q51, 35B40} 
\keywords{ Degasperi-Procesi equation,  orbital  stability, antipeakon-peakon profile}

\maketitle
\begin{abstract}
In this paper, we prove an orbital stability result for the Degasperis-Procesi peakon with respect to perturbations having a momentum density that is first negative and then positive. This leads to  the orbital  stability of  the antipeakon-peakon profile with respect to such perturbations.
\end{abstract}

\section{Introduction}

In this paper, we consider the Degasperis-Procesi equation (DP) first derived in \cite{DP}, usually written as
\begin{equation}\label{DP1}
\left\{ 
\begin{array}{l}
u_{t}-u_{txx}+4uu_{x}=3u_{x}u_{xx}+uu_{xxx},
\qquad(t,x)\in\mathbb{R}_{+}\times\mathbb{R},\\
u(0,x)=u_0(x) , \quad x\in\R \; .
\end{array}
\right. 
\end{equation}
The DP equation has been proved to be physically relevant for water waves (see \cite{MR2481064}) as an asymptotic shallow-water approximation to the Euler equations in some specific regime. It shares a lot of properties with the famous Camassa-Holm equation (CH) that reads
 \begin{equation}
u_t -u_{txx}=- 3 u u_x + 2 u_x u_{xx} + u u_{xxx}, \quad 
(t,x)\in\R_+\times \R \, . \label{Camassa1}\\
\end{equation}
In particular, it has a bi-hamiltonian structure, it is completely integrable (see \cite{DHH})  and has got the same explicit peaked solitary waves. These solitary waves are called \textit{peakons} whenever $c>0$  and {\it antipeakons} whenever $c<0$ and are defined by
\begin{equation}
u(t,x)=\varphi_{c}(x-ct)=c\varphi(x-ct)
=ce^{-|x-ct|},
\quad c\in\mathbb{R}^{*},
\quad(t,x)\in\mathbb{R}^2\; .
\label{1.7}
\end{equation}
Note that to give a sense to these solutions one has to apply 
 $(1-\partial^{2}_{x})^{-1}$ to \eqref{DP1}, to rewrite it under the form
\begin{equation}\label{DP}
u_{t}+\frac{1}{2}\partial_{x}( u^{2})+\frac{3}{2}(1-\partial^{2}_{x})^{-1}\partial_{x}(u^{2})=0,
\qquad(t,x)\in\mathbb{R}_{+}\times\mathbb{R}.
\end{equation}
However,  in  contrast with the CH equation, the DP equation has also shock peaked waves (see for instance \cite{L})
  which are given by 
  $$
  u(t,x) =-\frac{1}{t+k} \text{sgn}(x) e^{-|x|} , \quad k>0 \quad (t,x)\in \R_+\times \R \; .
  $$
  Another important difference between the CH and the DP equations  is due to the fact that  the DP conservations laws permit only to control the $ L^2$-norm of the solution whereas the $ H^1$-norm is a conserved quantity for the CH equation.
   In particular, without any supplementary hypotheses, the solutions of the DP equation may be unbounded contrary to the CH-solutions.  
In this paper we will make use of the three following conservation laws of the DP equation :
\begin{align}
M(u)=\int_{\R} y , \quad 
E(u)=\int_{\mathbb{R}}yv=\int_{\mathbb{R}}\left(4v^{2}+5v^{2}_{x}+v^{2}_{xx}\right) \label{EE}\\
\text{and} \quad \quad F(u)=\int_{\mathbb{R}}u^{3}=\int_{\mathbb{R}}\left(-v^{3}_{xx}+12vv^{2}_{xx}-48v^{2}v_{xx}+64v^{3}\right),
\label{i}
\end{align}
where  $y=(1-\partial^{2}_{x})u\text{ and } v=(4-\partial^{2}_{x})^{-1}u$. \\
It is worth noticing that these two variables, the momentum density $y=(1-\partial^{2}_{x})u$  and  the smooth variable $v=(4-\partial^{2}_{x})^{-1}u$ play a crucial role in the DP dynamic. In the sequel we will  often make use of the fact that 
\eqref{DP1} can be rewritten under the form 
\begin{equation}\label{DP2}
y_t+uy_x+3u_xy=0,\qquad(t,x)\in\mathbb{R}_{+}\times\mathbb{R},
\end{equation}
which is a transport equations for the momentum density as well as under the form 
\begin{equation}\label{DP3}
v_t=-\partial_x(1-\partial_x^2)^{-1} u^2 ,\qquad(t,x)\in\mathbb{R}_{+}\times\mathbb{R}.
\end{equation}
Note that, in the same way as $ v $ is associated with $ u $,  we will associate with the peakon profile  $\varphi_c$ the so-called    {\it smooth-peakon} profile $ \rho_c $ that is given by
\begin{equation}\label{1.88}
\rho_{c}=(4-\partial^{2}_{x})^{-1}\varphi_{c}=\frac{1}{4}e^{-2\vert \cdot\vert}\ast\varphi_c
=\frac{c}{3}e^{-|\cdot|}-\frac{c}{6}e^{-2|\cdot|}\ge 0
\; .
\end{equation}
\begin{figure}[!htb]
\vspace{-0.5cm}
\centering
\subfloat[
 Peakon and antipeakon profiles]
{\includegraphics[width=8cm, height=6.5cm]{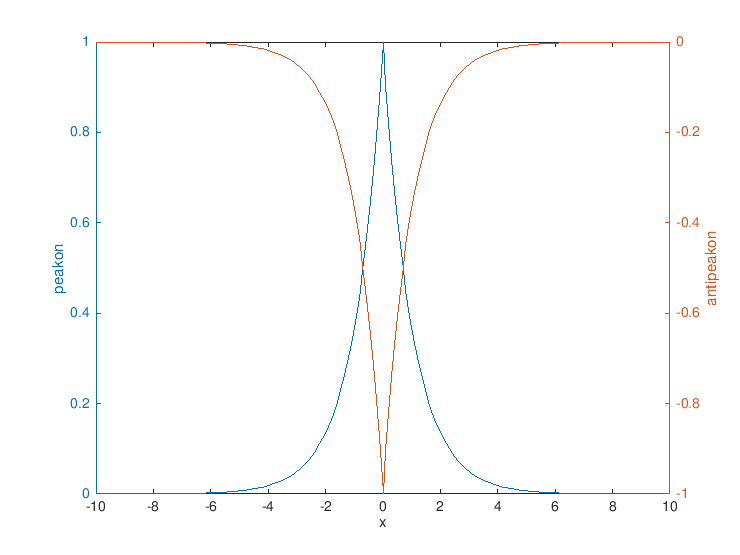}\label{F2}}
\subfloat[
Smooth peakon and    
 smooth antipeakon profiles]
{\includegraphics[width=8cm, height=6.5cm]{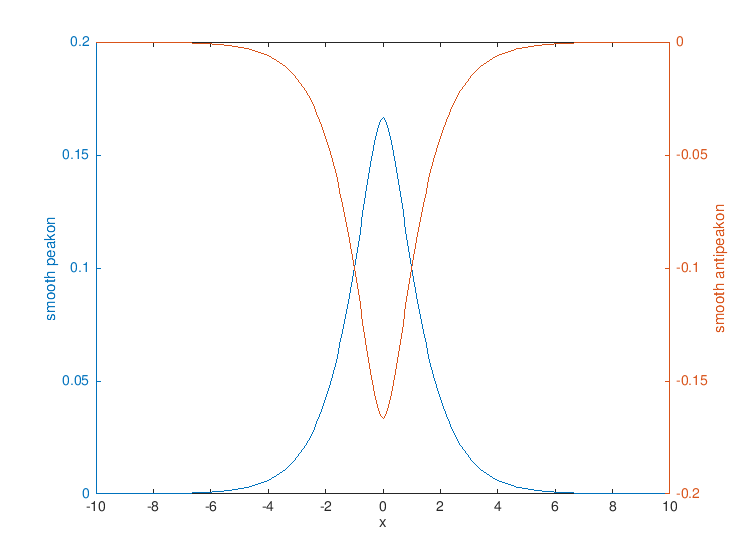}}
\caption{(A) Peakon and antipeakon representative curves with speed  $c=\pm 1$. They are even functions that admit a single maximum $c $ (resp. maximum $-c$)  at the origin. (B) Smooth  peakon and smooth antipeakon representative curves with speed  $c=\pm 1$. They are even $ C^2$ functions that admit a single maximum $c/6$ (resp. minimum $-c/6$)  at the origin. }
\label{smothanipek}
\end{figure}
In \cite{LL} (see also \cite{AK} for a great simplification)  an orbital stability\footnote{See \cite{LM1} for an asymptotic stability result in the class of functions with a  positive momentum density.} result is proven for the DP peakon  by adapting the approach first developed by Constantin and Strauss \cite{CS} for the Camassa-Holm peakon. However, in deep contrast to the Camassa-Holm case, the proof in \cite{LL} (and also in  \cite{AK}) crucially use that the  momentum density of the perturbation is non negative. This is absolutely required for instance in [\cite{LL}, Lemma 3.5] to get the crucial estimate on the auxiliary function $ h$ (see Section   \ref{sect5}  for the definition of $ h$)). Up to our knowledge, there is no available stability result for the Degasperis-Procesi peakons without this requirement on the momentum density 
 and one of the main contribution of  this work is to give a first stability result for the DP peakon with respect to  perturbations that do not share this sign requirement.  At this stage, it is worth noticing that the global existence of smooth solutions to the DP equation is only known for initial data that have  either   a momentum density with a constant sign or a momentum density that is first non negative and then non positive.  
 
The first part of this  paper is devoted to the proof of a stability result for  the peakon with respect to perturbations that belong to this second class of initial data. We would like to emphasize that the key supplementary argument with respect to the case of a non negative momentum density is of a dynamic nature. Inspired by similar considerations for the Camassa-Holm equation contained in \cite{LM1}, we study the dynamic of the momentum density $ y(t) $ at the left of a 
smooth curve $ x(t)$ such that $u(t,\cdot)-\varphi_c(\cdot-x(t)) $ remains small for  all $ t\in [0,T]$ with $ T >0$ large enough. This is in deep contrast with the arguments in the case $ y\ge 0 $ and with the common arguments for orbital stability
 that are of static nature : They only use the   conservation laws together with the continuity of the solution. 
 
In a second time, we combine this stability result with some almost  monotony results to get the orbital stability of the DP antipeakon-peakon profile and more generally of  trains of antipeakon-peakons.

Before stating our results let us introduce some notations and some function spaces that will appear in the statements. 
For $ p\in [1,+\infty] $ we denote by $ L^p(\R) $ the usual Lebesgue spaces endowed with their usual norm
 $ \| \cdot\|_{L^p} $.  We  notice that by integration by parts, it holds
$$
\|u(t,\cdot)\|^{2}_{L^{2}}=\int_{\mathbb{R}}(4v-v_{xx})^{2}dx=\int_{\mathbb{R}}\left(16v^{2}+8v^{2}_{x}+v^{2}_{xx}\right)dx
$$
and thus 
$$
E(u) \le \|u\|_{L^2}^2\le 4 E(u)\; .
$$
Therefore, 
$E(\cdot)$ is equivalent to $\|\cdot\|^{2}_{L^{2}(\mathbb{R})}$ and in the sequel 
 of this paper we set 
\begin{equation}
\|u\|_{\mathcal{H}}=\sqrt{E(u)}\quad  \text{ so that }\quad \|u\|_{\mathcal{H}}\le \|u\|_{L^2} \le 2  \|u\|_{\mathcal{H}}
\label{1.5}
\end{equation}
As in \cite{CM}, we will work in the space $ Y $ defined by 
\begin{equation}\label{defy}
Y:=\Bigl\{u\in L^2(\R) \quad\text{with} \quad \quad u-u_{xx}\in \M(\R)  \Bigr\}
\end{equation}
where $ \M(\R) $ is the space of finite Radon measure on $ \R$ that is endowed with the norm
 $  \|\cdot\|_{\M} $ where 
$$
\|y\|_{\M}:=\sup_{\varphi\in C(\R), \|\varphi\|_{L^\infty}\le 1} |\langle y, \varphi \rangle | \;.
$$
\begin{hypothesis}\label{hyp}
We will say that $u_0\in Y$ satisfies Hypothesis \ref{hyp} if there exists $x_0\in\R$ such that its momentum density $y_0=u_0-u_{0,xx}$ satisfies
\begin{equation}
\text{supp }y_0^{-}\subset]-\infty,x_0]\qquad\text{ and }\qquad\text{supp }y_0^{+}\subset[x_0,+\infty[.
\end{equation}
where $y_0^+$ and $ y_0^- $ are respectively the positive and the negative part of the Radon measure $ y_0$.
\end{hypothesis}
\begin{theorem}[Stability of a single  Peakon]\label{stabpeakon}
There exists $ 0<\eps_0<1 $ such that for any $ c>0 $, $A>0 $
 and $ 0<\eps<\eps_0 \frac{1\wedge c^2}{(2+c)^3} $,  there exists $ 0<\delta=\delta(A,\eps,c)\le \varepsilon^4 $ such that   for  any  $u_0 \in Y $ satisfying Hypothesis \ref{hyp} with
 \begin{equation}\label{P2}
\Vert u_0-\varphi_{c}\Vert_{\mathcal{H}}\le \delta\le \eps^4
\end{equation}
and
 \begin{equation}
\Vert u_0-u_{0,xx}\Vert_{\mathcal{M}}\leq A,
\label{P1}
\end{equation}
the emanating solution of the DP equation satisfies
\begin{equation}
\|u(t,\cdot)-\varphi_{c}(\cdot-\xi(t))\|_{\mathcal{H}}\le 2 (2+c) \; \eps,~~\forall t\in\R_+
\label{P3}
\end{equation}
and 
\begin{equation}
\|u(t,\cdot)-\varphi_{c}(\cdot-\xi(t))\|_{L^\infty}\le 8 (2+c)^{2}\varepsilon^{2/3},~~\forall t\in \R_+\; , 
\label{P44}
\end{equation}
where $\xi(t)\in\mathbb{R}$ is the only point where the function 
$v(t,\cdot)=(4-\partial^{2}_{x})^{-1}u(t,\cdot)$ reaches its maximum on $\R$.
\end{theorem}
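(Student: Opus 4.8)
The plan is to run a Constantin--Strauss type argument in the smooth variable $v=(4-\partial_x^2)^{-1}u$, driven by the two conserved quantities $E$ and $F$ of \eqref{EE}--\eqref{i}. First I would record the rigid peakon values. A direct computation gives $y_{\varphi_c}=(1-\partial_x^2)\varphi_c=2c\,\delta_0$, so that, using \eqref{1.88} and $M_c:=\rho_c(0)=c/6$,
\[
E(\varphi_c)=\int v\,dy_{\varphi_c}=2c\,\rho_c(0)=\frac{c^2}{3}=12M_c^2,\qquad F(\varphi_c)=\frac{2}{3}c^3=144M_c^3=12M_c\,E(\varphi_c).
\]
Thus for the peakon the maximum $M:=\max_x v=v(\xi)$ and the conserved quantities satisfy the exact relations $E=12M^2$ and $F=12ME$. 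The strategy is then to establish, for a general $u$ close to $\varphi_c$, a functional relation between $F(u)$, $E(u)$ and $M$ (this is the content of the static estimates of \cite{LL,AK}, built on the auxiliary function $h$ of Section~\ref{sect5}) together with a coercivity bound, so that smallness of $E(u)-E(\varphi_c)$ and $F(u)-F(\varphi_c)$ forces $M$ to remain within $O(\varepsilon)$ of $M_c$, and hence $\|u-\varphi_c(\cdot-\xi)\|_{\mathcal H}$ to stay small; here $\xi$ is the unique maximiser of $v$, which is well defined because $v$ is $C^2$, decaying, and strictly concave near its peak.

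The second step is to isolate precisely where the hypothesis $y\ge0$ was used in \cite{LL,AK}. Writing $E(u)=\int_\R v\,dy$ (legitimate since $v$ is continuous and $y\in\M(\R)$), the clean control of the energy by its value at the peak, $E(u)=\int v\,dy\le M\int dy$, holds only when $y\ge0$; in general
\[
M\int_\R dy-E(u)=\int_\R (M-v)\,dy=\int_\R(M-v)\,dy^{+}-\int_\R(M-v)\,dy^{-},
\]
and since $v\le M$ everywhere the first integral is $\ge0$ while the \emph{defect} $\int_\R(M-v)\,dy^{-}\ge0$ is precisely the term that the estimate on $h$ cannot absorb once $y^{-}\neq0$. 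Hence the whole argument reduces to showing that this defect coming from the negative part $y^{-}$ stays small along the flow.

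The hard part, and the genuinely new ingredient, is to control that defect dynamically. Using the transport form \eqref{DP2}, $y_t+uy_x+3u_xy=0$, the negative part $y^{-}$ is carried along the characteristics $\dot q=u(t,q)$ and, by Hypothesis~\ref{hyp}, starts to the left of $x_0$. Inspired by \cite{LM1}, I would introduce a smooth curve $x(t)$ lying slightly to the left of the peak $\xi(t)$ and run a continuity (bootstrap) argument on the maximal interval where $\|u(t)-\varphi_c(\cdot-\xi(t))\|_{\mathcal H}\le 2(2+c)\varepsilon$. On that interval I would show that (i) the characteristics emanating from $\supp y^{-}$ cannot cross $x(t)$, so $y^{-}$ stays confined to the left, where the weight $M-v$ is close to $M$ but $v$ itself is exponentially small far from $\xi$; and (ii) because $v=(4-\partial_x^2)^{-1}u$ is smoothing, $v$ and $M-v$ are Lipschitz with constants controlled by $E(u)$ and $\|y\|_{\M}\le A$, which lets one close a differential inequality for $t\mapsto\int_\R(M-v)\,dy^{-}$ and keep it $O(\varepsilon)$. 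The delicate points — which is where I expect the real difficulty to lie — are that $M(t)=\max v(t,\cdot)$ is only Lipschitz in $t$, and that one must \emph{propagate}, rather than merely assume, the separation between $\supp y^{-}$ and $\xi(t)$; this dynamic step is exactly what replaces the static sign condition of \cite{LL,AK}.

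Finally I would assemble the pieces into a standard continuity argument. Choosing $\delta\le\varepsilon^4$ makes $|E(u_0)-E(\varphi_c)|$ and $|F(u_0)-F(\varphi_c)|$ of order $\varepsilon^4$, and the static relation together with the dynamic defect control keep $M(t)$ within $O(\varepsilon)$ of $M_c$, hence $\|u-\varphi_c(\cdot-\xi)\|_{\mathcal H}\le 2(2+c)\varepsilon$ for all $t$; the smallness threshold $\varepsilon<\varepsilon_0(1\wedge c^2)/(2+c)^3$ provides exactly the room needed so that the bootstrap never saturates, giving \eqref{P3}. For the $L^\infty$ bound \eqref{P44} I would apply the elementary one-dimensional interpolation $\|w\|_{L^\infty}^3\le C\|w\|_{L^2}^2\,\|w_x\|_{L^\infty}$ to $w=u-\varphi_c(\cdot-\xi)$: by \eqref{1.5} one has $\|w\|_{L^2}\le 2\|w\|_{\mathcal H}=O((2+c)\varepsilon)$, while $\|w_x\|_{L^\infty}$ is bounded in terms of $c$ and $A$ (since $u=\tfrac12 e^{-|\cdot|}\ast y$ gives $\|u_x\|_{L^\infty}\le\tfrac12\|y\|_{\M}$ and $\|\varphi_{c,x}\|_{L^\infty}=c$), and this produces the $\varepsilon^{2/3}$ rate with the stated constant.
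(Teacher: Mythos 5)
Your overall architecture --- conservation of $E$ and $F$, a static cubic relation in $M=\max_{\R} v$ built on the auxiliary function $h$, a dynamic control of the negative part of the momentum density near the peak, and a bootstrap --- is indeed the paper's strategy, but the two mechanisms you propose for the key steps would not close. First, the dynamic step. The obstruction in \cite{LL,AK} is not the inequality $E(u)\le M\int dy$ but the pointwise bound $h\le 18M$ near the peak, which reduces to the smallness of $(u-6v)^{+}=\bigl(\tfrac12(e^{-|\cdot|}-e^{-2|\cdot|})\ast(-y)\bigr)^{+}$ there, i.e.\ of an exponentially weighted integral of $y^-$ centred near $\xi(t)$ (see \eqref{ql}); your quantity $\int(M-v)\,dy^-$ does not occur in the argument. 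More importantly, quantitatively this defect must be brought down to $O(\eps^2)$, not $O(\eps)$: feeding a defect of size $\eta$ into Lemma \ref{Lemma P6} yields $|c/6-M|\lesssim \sqrt{\eta}\,\eps(2+c)/\sqrt{c}$ and hence, via the quadratic identity, $\|u-\varphi_c(\cdot-\xi)\|_{\H}\lesssim c^{1/4}(2+c)^{1/2}\eta^{1/4}\eps^{1/2}$, which beats the bootstrap threshold $2(2+c)\eps$ only when $\eta\lesssim\eps^2$. Mere confinement of $y^-$ to the left of a curve at $O(1)$ distance from the peak, followed by a Gr\"onwall argument, cannot produce this: $\|y^-(t)\|_{\M}$ may be of order $A\gg\eps^2$ and sit at bounded distance from $\xi(t)$, so the weighted integral is a priori only $O(\eps^{2/3})$ for $t>0$ (via Lemma \ref{LemmaPP1}), and its time derivative is of order $A$, not $\eps^2$. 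The decisive mechanism in the paper is different: near the sign-change point $x_0(t)$ one has $u\ge c/4$ and $u_x\ge u$ by \eqref{dodo}, so the flow map stretches like $q_x\gtrsim e^{ct/8}$ and the relation $y(t,q)q_x^3=y_0$ \emph{annihilates} the mass of $y^-$ within distance $ct/16$ of the peak at rate $e^{-ct/8}$ (Proposition \ref{pro2}), the remaining mass being suppressed by the kernel $e^{-|\cdot|}$ (Corollary \ref{coro1}). This only yields $\eta\le\eps^2$ after a time $T_\eps\sim c^{-1}\ln(A/\eps^2)$, and the initial layer $[0,T_\eps]$ is covered by continuity with respect to the data --- which is precisely why $\delta$ must depend on $A$. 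Your plan omits this two-phase structure and the Jacobian mechanism entirely.

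Second, the $L^\infty$ bound \eqref{P44}. The interpolation $\|w\|_{L^\infty}^3\le C\|w\|_{L^2}^2\|w_x\|_{L^\infty}$ is valid, but $\|w_x\|_{L^\infty}\le\tfrac12\|y(t)\|_{\M}+c$ is not uniformly bounded in time by a function of $c$ and $A$ alone: $\|y(t)\|_{\M}$ grows (see \eqref{50}), so this route gives a bound whose constant depends on $A$ and degrades in $t$, not the stated uniform $8(2+c)^2\eps^{2/3}$. The paper's Lemma \ref{LemmaPP1} circumvents $\|y\|_{\M}$ altogether by exploiting only the one-sided bounds $u_x\ge u$ on $]-\infty,x_0]$ and $u_x\ge-u$ on $[x_0,+\infty[$ from \eqref{dodo}, which hold for all time under Hypothesis \ref{hyp} with no growth; this lemma is moreover needed throughout the bootstrap (not only at the end), e.g.\ to verify the hypotheses of the modulation and $h$-estimates.
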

Combining the above stability of a single peakon with the general framework first introduced in \cite{MMT} and more precisely following  \cite{MR2542735}-\cite{dikalm} we obtain the stability of a train of well-ordered antipeakons and peakons. This contains in particular the stability of the antipeakon-peakon profile. 
\begin{theorem}\label{stabantipeakonpeakon}
Let be given $ N_- \in \N^*$ negative   velocities $ c_{-N_-} <..<c_{-1}<0 $, $ N_+ \in \N^*$ positive velocities $0<c_1<..<c_{N_+} $ and $ A>0 $. There exist $B=B(\vec{c})>0$, $L_0=L_0(A,\vec{c}) >0 $ and $ 0<\eps_0=\varepsilon_0(\vec{c})<1 $ such that for any $ 
0<\varepsilon<\eps_0(\vec{c}) $ there exists $ 0<\delta(\eps,A,\vec{c})<\eps^4	$  such that if  $ u\in C(\R_+;H^1) $ is
   the  solution of the DP equation emanating from $ u_0\in Y $, satisfying Hypothesis \ref{hyp} with 
    \begin{equation}
\Vert u_0-u_{0,xx}\Vert_{\mathcal{M}}\leq A,
\label{huhu0}
\end{equation}
and
   \begin{equation}\label{huhu}
 \|u_0-\sum_{j=-N_-\atop j\neq 0}^{N_+}  \varphi_{c_j}(\cdot-z_j^0) \|_{\H} \le \delta\le \eps^4
 \end{equation}
 for some  $ z^0_{-N_-}<..<z^0_{-1}<z^0_1<\cdot\cdot\cdot<z^0_{N_+} $ such that 
 \begin{equation} \label{distz}
  z_j^0-z_{q}^0\ge L\ge L_0 ,   \quad \forall (j,q) \in \Bigl( [[-N_- , N_+]]\setminus\{0\}\Bigr)^2, \quad j> q  \; , 
 \end{equation}
  then there exist $ N_-+N_+ $  functions $ \xi_{-N_-}(\cdot), .., \xi_{-1}(\cdot), \xi_1(\cdot), ..,\xi_{N_+}(\cdot) $ 
  such that
\begin{equation}
\sup_{t\in\R+} \|u(t,\cdot)-\sum_{j=-N_-\atop j\neq 0}^{N_+} \varphi_{c_j}(\cdot-\xi_j(t)) \|_{\H} <
B(\eps+ L^{-1/8})\;  \label{ini2}
\end{equation}
and 
\begin{equation}
\sup_{t\in\R+} \|u(t,\cdot)-\sum_{j=-N_-\atop j\neq 0}^{N_+} \varphi_{c_j}(\cdot-\xi_j(t)) \|_{L^\infty} \lesssim
\eps^{2/3}+ L^{-\frac{1}{12}}\; .  \label{ini3}
\end{equation}
Moreover, for any $ t\ge 0  $ and $ i\in [[1, N_+]] $(resp. $i\in [[-N_{-},-1]])$, $ \xi_i( t) $ is the only point of maximum (resp. minimum) 
 of $ v(t) $ on $[\xi_i(t)-L/4,\xi_i(t)+L/4] $.
 \end{theorem}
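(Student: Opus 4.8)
The plan is to combine the single-bump stability of Theorem \ref{stabpeakon} with localized almost-monotonicity estimates, following the general strategy of \cite{MMT}, \cite{MR2542735}, \cite{dikalm}. The first observation I would record is that a single antipeakon is stable as well: the map $u(t,x)\mapsto -u(t,-x)$ leaves the DP equation \eqref{DP} invariant, sends $\varphi_c$ to $\varphi_{-c}$ and preserves Hypothesis \ref{hyp} (with $x_0$ replaced by $-x_0$), so Theorem \ref{stabpeakon} immediately yields the analogous statement for $c<0$. The second observation is that the configuration is dynamically stable: antipeakons ($c_j<0$) travel to the left and peakons ($c_j>0$) to the right, and since $c_{-N_-}<\cdots<c_{-1}<0<c_1<\cdots<c_{N_+}$ with the positions ordered accordingly, every gap $\xi_{j}(t)-\xi_{j-1}(t)$ is expected to stay $\gtrsim L$ and in fact to grow linearly in time.

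I would run a bootstrap argument. Let $T^\star$ be the supremum of the times $T$ on which one can write $u(t,\cdot)$ as $O(\eps+L^{-1/8})$-close in $\H$ to a sum $\sum_j\varphi_{c_j}(\cdot-\xi_j(t))$, with the $\xi_j$ the appropriate local extrema of $v(t,\cdot)$ and the gaps remaining $\ge L/2$. On $[0,T^\star]$ a modulation argument (the implicit function theorem applied to the defining relations $\partial_x v(t,\xi_j(t))=0$, using that $v$ is $C^2$ and uniformly close to the translated smooth profiles $\rho_{c_j}$ of \eqref{1.88}, which have a unique nondegenerate extremum) produces $C^1$ curves $\xi_j$ with $\dot\xi_j$ close to $c_j$; this keeps the gaps large and in fact growing, so the bootstrap on the geometric configuration is never the binding constraint.

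The core of the proof is the almost-monotonicity of localized versions of the conserved quantities $E$ and $F$. I would pick points $m_j(t)$ in the middle of each gap, moving at an intermediate speed, and smooth cut-offs $\Psi_j$ built from a fixed profile $\psi$ (with $\psi'$ exponentially localized) to define $E_j(t)=\int(4v^2+5v_x^2+v_{xx}^2)\,\Psi_j$ and a corresponding localized $F_j$. Differentiating in time and using the equation \eqref{DP3} for $v$ together with the exponential decay of the peakon profiles and the separation of the bumps, I expect to get $E_j(t_2)-E_j(t_1)\le C\,e^{-L/C}$ and an analogous one-sided bound for $F_j$, uniformly on $[0,T^\star]$. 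Combined with conservation of the total $E$ and $F$, these estimates confine to each region an amount of energy and cubic momentum that differs from $E(\varphi_{c_j})$ and $F(\varphi_{c_j})$ only by $O(\eps^2)+O(e^{-L/C})$, thereby preventing any transfer of mass between the bumps.

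With the energy and cubic functional localized and pinned near their single-peakon values, I would then replay the single-bump mechanism of Theorem \ref{stabpeakon} in each window $[\xi_j-L/4,\xi_j+L/4]$: the variational inequality that forces closeness to $\varphi_{c_j}$ from the near-optimality of $(E,F)$ and the value of $\max v$ runs identically on the localized quantities and yields $\|u(t,\cdot)-\varphi_{c_j}(\cdot-\xi_j(t))\|_{\H}\lesssim\eps+L^{-1/8}$ on each region. Summing over $j$ closes the bootstrap and gives \eqref{ini2}, while the $L^\infty$ estimate \eqref{ini3} follows from \eqref{P44} applied bump by bump, and the uniqueness of $\xi_j$ as the extremum of $v$ on $[\xi_j-L/4,\xi_j+L/4]$ is inherited from the corresponding property of $\rho_{c_j}$. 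The main obstacle, and the place where DP genuinely differs from Camassa-Holm, is the monotonicity step: only the $L^2$-norm is controlled and the momentum density changes sign, so I expect the dynamic control of $y(t)$ to the left of the relevant curves supplied by Theorem \ref{stabpeakon}---rather than a purely static argument---to be what makes the localized energy bound come with the correct sign, and reconciling this local argument across the antipeakon/peakon interface prescribed by Hypothesis \ref{hyp} is the delicate point.
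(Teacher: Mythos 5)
Your architecture (modulation, localized almost-monotonicity, replaying the single-bump variational argument in each window, reflection symmetry for the antipeakons) is indeed the one the paper follows, and you correctly single out the sign of the momentum density as the delicate point; but you flag that point without resolving it, and that is where the proof genuinely lives. The single-peakon control of $\Vert y(t)\Vert_{\M}$ --- needed both for the dynamic estimate on $y^-$ and for the bound $u-6v\le\alpha^2$ that makes $h\le 18M+\alpha^2$ work --- rests on the global lower bound $u\ge -4\alpha^{2/3}(2+c)$, which is false once antipeakons are present. The paper must introduce a new ingredient: an almost-monotonicity of $E(\cdot)+\tfrac{c_1}{2^9}M(\cdot)$ at the right of a curve travelling slightly to the left of the slowest peakon (Lemma \ref{lemmo}), which bounds $\int_{y_1(t)+2}^{+\infty}y(t)$ and, via Gr\"onwall on $\int y^+$ together with conservation of $M$ and the reflection symmetry, yields the growth bound of Proposition \ref{prop5}; only then can Proposition \ref{pro2} be adapted (Proposition \ref{pro2loc}), using that the lower bound $u\gtrsim -2^{-5}c_1$ is only required on $[x_{-1}(t)+L/4,+\infty[$. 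A second omission concerns the initial layer: $\sum_j\varphi_{c_j}(\cdot-z_j^0-c_jt)$ is not an exact solution, so the continuity-with-respect-to-data argument that handles $[0,T_\eps]$ for a single peakon does not apply; the paper needs Lemma \ref{exsol} (the sum is an approximate solution with error $e^{-L/2}$ on a time window of length $\sim\ln(L^{3/4})$), whose proof itself requires the $\M$-bound on $y$.

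There is also an overstatement in your monotonicity step. What can be proved is that the cumulative quantities $\mathcal{J}_{j,\lambda,K}$, localized at the \emph{right} of curves $y_j(t)$ moving with the bumps, are almost non-increasing (Proposition \ref{monotonicitylem}); the energy in an individual window, $E_j=\mathcal{J}_{j,0}-\mathcal{J}_{j+1,0}$, is not thereby pinned near $E(\varphi_{c_j})$, since $\mathcal{J}_{j+1}$ may decrease and feed energy into the $j$-th window. One cannot run the single-bump mechanism in each window independently: the localized variational inequalities of Lemma \ref{Lemma P666} must be summed over $j$ and an Abel transformation performed, using the ordering $M_1<\cdots<M_{N_+}$ inherited from $c_1<\cdots<c_{N_+}$ to give the terms $(M_i-M_{i-1})\Delta_0^T\mathcal{J}_{i,0,K}$ a favourable sign, before one concludes that each $\delta_i=c_i/6-M_i$ is $O(\eps^2+L^{-1/4})$ and invokes the global quadratic identity of Lemma \ref{lemma20}.
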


\section{Global well-posedness results} 
We first recall some obvious estimates that will be useful in the sequel of this paper.  
Noticing that $ p(x)=\frac{1}{2}e^{-|x|} $ satisfies $p\ast y=(1-\partial_x^2)^{-1} y $ for any $y\in H^{-1}(\R) $ we easily get
$$
\|u\|_{W^{1,1}}=\|p\ast (u-u_{xx}) \|_{W^{1,1}}\lesssim \| u-u_{xx}\|_{\M}
$$
and
 t$$
\|u_{xx}\|_{\M}\le \|u\|_{L^1}+ \|u-u_{xx} \|_{\M}
$$
 which ensures  that 
\begin{equation} \label{bv}
Y\hookrightarrow  \left\{ u\in W^{1,1}(\R) \mbox{ with } u_x\in {\mathcal BV}(\R) \right\} \; .
\end{equation}
 It is also worth noticing that  for $ u\in C^\infty_0(\R) $, satisfying Hypothesis \ref{hyp},
\begin{equation}\label{formulev}
u(x)=\frac{1}{2} \int_{-\infty}^x e^{x'-x} (u-u_{xx})(x') dx' +\frac{1}{2} \int_x^{+\infty} e^{x-x'} (u-u_{xx})(x') dx'
\end{equation}
and 
$$
u_x(x)=-\frac{1}{2}\int_{-\infty}^x e^{x'-x} (u-u_{xx})(x') dx' + \frac{1}{2}\int_x^{+\infty} e^{x-x'} (u-u_{xx})(x') dx' \; ,
$$
so that for $ x\le x_0 $ we get 
$$
u_x(x) =u(x)-e^{-x} \int_{-\infty}^x e^{x'} y(x') \, dx' \ge u(x)
$$
whereas for $ x\ge x_0 $ we get 
$$
u_x(x) =-u(x)+e^{x} \int^{+\infty}_x e^{-x'} y(x') \, dx' \ge -u(x)
$$
Throughout this paper, we will denote $ \{\rho_n\}_{n\ge 1} $ the mollifiers defined by 
\begin{equation} \label{rho}
\rho_n=\Bigl(\int_{\R} \rho(\xi) \, d\xi 
\Bigr)^{-1} n \rho(n\cdot ) \mbox{ with } \rho(x)=\left\{ 
 \begin{array}{lcl} e^{1/(x^2-1)} & \mbox{for} & |x|<1 \\
0 & \mbox{for} & |x|\ge 1 .
\end{array}
\right.
\end{equation}
Following \cite{E} we approximate $ u\in Y $ satisfying  Hypothesis \ref{hyp} by the  sequence of functions
\begin{equation}\label{app}
u_n=  p\ast y_n \;\text{with}\;y_n=-(\rho_n\ast y^-)(\cdot+\frac{1}{n})+(\rho_n\ast y^+)(\cdot-\frac{1}{n})
 \text{ and } y=u-u_{xx},
\end{equation}
that belong to $Y\cap H^\infty(\R) $  and satisfy Hypothesis \ref{hyp} with the same $ x_0 $.
It is not too hard to check that 
\begin{equation}\label{estyn}
\|y_n\|_{L^1} \le \|y\|_{\mathcal M} .
\end{equation}
Moreover,  noticing that 
$$
u_n  = -\Bigl( \rho_n \ast (p \ast y^-)\Bigr) (\cdot+\frac{1}{n})+\Bigl(\rho_n\ast (p\ast  y^+)\Bigr)(\cdot-\frac{1}{n} ),
$$
with  $ p\ast y^{\mp} \in H^1(\R) \cap W^{1,1}(\R) $,  we infer that 
\begin{equation}\label{mm}
u_n \to u \in H^1(\R) \cap W^{1,1}(\R) \; .
\end{equation}
that ensures that for any $ u\in Y $ satisfying  Hypothesis \ref{hyp} it holds
\begin{equation}\label{dodo}
 u_x \ge u \text{ on } ]-\infty, x_0[\quad  \text{and } \quad  u_x\ge -u \text{ on } ]x_0,+\infty[   \; .
\end{equation}
The following propositions  briefly recall the global well-posedness results for the Cauchy problem of the DP equation 
(see for instance \cite{MR2271927} and \cite{LY} for details of the proof) and its consequences.
 \begin{proposition}{(Strong solutions \cite{LY}, \cite{MR2271927})}\label{smoothWP} \\
 Let $ u_0\in H^s(\R)$ with $ s\ge 3  $. Then the initial value problem \eqref{DP} has a  unique solution $ u\in C\big([0,T]; H^s(\R)\big)  \cap C^1\big([0,T]; H^{s-1}(\R)\big) $ where $ T=T\big(\|u_0\|_{H^{\frac{3}{2}+}}\big)>0 $ and, for any $ r>0 $,  the map $ u_0 \to u $ is continuous  from $ B(0,r)_{H^s} $ into $ C\big([0,T(r); H^s(\R)\big) $.\\
 Moreover, let $ T^* >0 $ be the maximal time of existence of $ u $ in $ H^s(\R)$ then 
 \begin{equation}
 T^*<+\infty \quad \Leftrightarrow  \quad \liminf_{t\nearrow T^*} u_x=-\infty \; .
 \end{equation}
If furthermore $y_0= u_{0}-u_{0,xx}\in L^1(\R) $  and $ u_0  $ satisfies Hypothesis \ref{hyp}  then $ T^*=+\infty $  and 
 $ y=u-u_{xx}\in L^\infty_{loc} \big(\R_+;L^1(\R)\big) $ with 
 
  \begin{equation}\label{estimatey}
\|y(t)\|_{L^1}  \le e^{3t^2 \|u_0\|_{L^2} +2 t \|u_0\|_{L^\infty} } \|y_0\|_{L^1} , \quad\forall t\in \R_+ ,
  \end{equation}
  and
 \begin{equation}\label{conserM}
\int_{\R} y(t,x) \, dx=\int_{\R} y(0,x)\, dx , \quad\forall t\in \R_+ \; .
  \end{equation}
\end{proposition}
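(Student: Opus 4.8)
The local existence, uniqueness and persistence in $H^s$, together with the continuity of the flow and the blow-up alternative $T^\ast<+\infty \Leftrightarrow \liminf_{t\nearrow T^\ast} u_x=-\infty$, are by now classical consequences of Kato's quasilinear theory applied to \eqref{DP}, and I would simply take them from \cite{LY,MR2271927}. The substantial and paper-relevant part is the global existence and the quantitative control of $y$ under Hypothesis \ref{hyp}, which I would obtain by transporting the momentum density along the flow. Let $q(t,\cdot)$ be the flow map of $u$, i.e. the solution of $\partial_t q(t,x)=u(t,q(t,x))$, $q(0,x)=x$. Since $u(t)\in H^s\hookrightarrow C^2$ for $s\ge 3$, $q(t,\cdot)$ is a well-defined increasing diffeomorphism of $\R$ as long as the solution exists, with $\partial_x q(t,x)=\exp\bigl(\int_0^t u_x(s,q(s,x))\,ds\bigr)>0$.

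First I would show that the spatial sign pattern of $y_0$ prescribed by Hypothesis \ref{hyp} is propagated by the flow. Reading the transport equation \eqref{DP2} along characteristics gives $\frac{d}{dt}\,y(t,q(t,x))=-3u_x(t,q(t,x))\,y(t,q(t,x))$, hence $y(t,q(t,x))=y_0(x)\exp\bigl(-3\int_0^t u_x(s,q(s,x))\,ds\bigr)$, so $y(t,q(t,x))$ keeps the sign of $y_0(x)$ for all $t$. Since $y_0\in H^{s-2}\cap L^1$ is continuous and changes sign at $x_0$, one has $y_0(x_0)=0$, and because $q(t,\cdot)$ preserves the order of points, $y(t,\cdot)\le 0$ on $]-\infty,x_0(t)]$ and $y(t,\cdot)\ge 0$ on $[x_0(t),+\infty[$ with $x_0(t):=q(t,x_0)$ and $y(t,x_0(t))=0$. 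Thus $u(t)$ satisfies Hypothesis \ref{hyp} with $x_0$ replaced by $x_0(t)$, and repeating the computation leading to \eqref{dodo} yields $u_x(t,\cdot)\ge u(t,\cdot)$ on $]-\infty,x_0(t)[$ and $u_x(t,\cdot)\ge -u(t,\cdot)$ on $]x_0(t),+\infty[$; in particular $u_x(t,x)\ge -\|u(t)\|_{L^\infty}$ for every $x$.

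To upgrade this one-sided bound into global existence I would control $\|u(t)\|_{L^\infty}$ in time. Rewriting \eqref{DP} as $u_t+uu_x=-\tfrac32\,p_x\ast u^2$ with $p=\tfrac12 e^{-|\cdot|}$ and evaluating along the flow, $\frac{d}{dt}u(t,q(t,x))=-\tfrac32(p_x\ast u^2)(t,q(t,x))$, so that $\bigl|\frac{d}{dt}u(t,q(t,x))\bigr|\le \tfrac32\|p_x\|_{L^\infty}\|u(t)\|_{L^2}^2\le \tfrac34\|u(t)\|_{L^2}^2$. Since $E$ is conserved and equivalent to $\|\cdot\|_{L^2}^2$ by \eqref{1.5}, $\|u(t)\|_{L^2}$ stays bounded by a fixed multiple of $\|u_0\|_{L^2}$, and because $q(t,\cdot)$ is onto, integrating in $t$ gives a linear-in-time bound $\|u(t)\|_{L^\infty}\le \|u_0\|_{L^\infty}+C\,t\,\|u_0\|_{L^2}^2$. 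Consequently $u_x(t,\cdot)\ge -\|u(t)\|_{L^\infty}$ stays finite on every bounded time interval, the blow-up alternative cannot trigger, and $T^\ast=+\infty$.

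It remains to establish \eqref{estimatey} and \eqref{conserM}. Using $y(t,x_0(t))=0$ and $y_t=-(uy)_x-2u_xy$, differentiating $\int y^-=-\int_{-\infty}^{x_0(t)}y$ and $\int y^+=\int_{x_0(t)}^{+\infty}y$ and integrating by parts (the boundary terms vanish by the decay of $u$ and because $y$ vanishes at $x_0(t)$) yields $\frac{d}{dt}\int y^\pm\,dx=-2\int u_x\,y^\pm\,dx$; on the respective half-lines $-u_x\le\|u\|_{L^\infty}$ and $y^\pm\ge 0$, whence $\frac{d}{dt}\|y(t)\|_{L^1}\le 2\|u(t)\|_{L^\infty}\|y(t)\|_{L^1}$. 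Grönwall's lemma together with the linear bound on $\|u\|_{L^\infty}$ then produces exactly the exponential factor in \eqref{estimatey}. Finally, integrating \eqref{DP2} over $\R$ gives $\frac{d}{dt}\int y=-2\int u_x\,y=-2\int u_x(u-u_{xx})=0$, since $\int uu_x=\int u_xu_{xx}=0$, which is \eqref{conserM}. I expect the main obstacle to lie in the second and third steps: one must be certain that the sign structure of Hypothesis \ref{hyp} is genuinely preserved by the flow and that it translates into the one-sided lower bound on $u_x$, for this is precisely the mechanism that forbids the gradient from blowing up downward and thereby rules out finite-time singularity formation.
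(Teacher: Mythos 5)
Your argument is correct, but note that the paper itself offers no proof of this proposition: it is stated as a recalled result with the proof deferred to \cite{LY} and \cite{MR2271927}. Your reconstruction is essentially the standard argument from those references, and it is fully consistent with how the paper uses the same mechanism elsewhere — the sign propagation $y(t,q(t,x))q_x^3(t,x)=y_0(x)$ is exactly \eqref{13b}, the one-sided bound $u_x\ge -\Vert u\Vert_{L^\infty}$ is the content of \eqref{dodo} transported by the flow, and your computation $\frac{d}{dt}\int y^{\pm}=-2\int u_x y^{\pm}$ followed by Gr\"onwall is precisely the scheme the authors run in the lemma leading to \eqref{50}. Two small remarks. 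First, carrying your constants through ($\vert\partial_x p\ast\tfrac32 u^2\vert\le\tfrac34\Vert u\Vert_{L^2}^2\le 3\Vert u_0\Vert_{L^2}^2$ by conservation of $E$ and \eqref{1.5}) gives the exponent $3t^2\Vert u_0\Vert_{L^2}^2+2t\Vert u_0\Vert_{L^\infty}$, whereas \eqref{estimatey} as printed has $\Vert u_0\Vert_{L^2}$ without the square; your version is the one that the computation actually produces, so the display is most likely a typo. Second, to make the Gr\"onwall step airtight you should record that $y(t)\in L^1$ persists, which follows from $\int\vert y(t)\vert\,dx=\int\vert y_0\vert\,q_x^{-2}\,dx$ and the lower bound $q_x\ge\exp\bigl(-t\sup_{[0,t]}\Vert u\Vert_{L^\infty}\bigr)$ that your $L^\infty$ estimate already provides; this is a routine point but worth stating, since Gr\"onwall presupposes the finiteness and absolute continuity of $t\mapsto\Vert y(t)\Vert_{L^1}$.
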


\begin{proposition}(Global Weak Solution \cite{MR2271927}) \label{weakGWP}\\
 Let $ u_0 \in Y $ satisfying Hypothesis \ref{hyp}  for some 
  $ x_0\in\R $. \vspace*{2mm}\\
 {\bf 1. Uniqueness and global existence :} \eqref{DP} has a unique solution 
 $$
  u\in  C\big(\R_+; H^1(\R)\big) \cap C^1\big(\R_+;L^2(\R)\big)\cap L^\infty_{loc}\big(\R_+; Y\big) \; .
  $$
 $ M(u)=\langle y, 1 \rangle $, $ E(u)=\langle y, v\rangle   $ and $ F(u) $ are conservation laws . Moreover, for any $ t\in \R_+$, the density momentum 
   $ y(t) $ satisfies   $\supp y^{-}(t)\subset ]-\infty,x_0(t)] $ and $\supp y^+(t) \subset [x_0(t),+\infty[ $ where $ x_0(t)=q(t,x_0) $ with $ q(\cdot,\cdot)$ defined by 
 \begin{equation}\label{defq}
  \left\{ 
  \begin{array}{rcll}
  q_t (t,x) & = & u(t,q(t,x))\, &, \; (t,x)\in \R^2\\
  q(0,x) & =& x\, & , \; x\in\R \; 
  \end{array}
  \right. \; .
  \end{equation} 
  {\bf 2. Continuity with respect to  initial data }: For any sequence $ \{u_{0,n}\} $ bounded in $ Y $ that satisfy Hypothesis \ref{hyp} and such that 
    $ u_{0,n} \to u_0 $ in $ H^1(\R ) $,  the emanating sequence of solutions $ \{u_n\}  $ satisfies for any $ T>0 $
  \begin{equation}\label{cont2}
  u_n \to u \mbox{ in } C\big([0,T]; H^1(\R)\big) \; . \end{equation}
 and
    \begin{equation}\label{cont3}
(1-\partial_x^2) u_n \weaktendsto{n\to\infty} \hspace*{-3mm} \ast \; (1-\partial_x^2) u  \mbox{ in } L^\infty\big(]0,T[; {\mathcal M}(\R)\big)\quad .
   \end{equation}
  
\end{proposition}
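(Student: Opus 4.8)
The plan is to construct the solution by regularization from the smooth theory of Proposition~\ref{smoothWP}, to propagate the conservation laws and the support splitting from the smooth level, and then to extract uniqueness and continuous dependence from the one-sided bound \eqref{dodo} on $u_x$ combined with the transport structure \eqref{DP2}. First I would approximate $u_0$ by the smooth sequence $\{u_{0,n}\}$ of \eqref{app}, which belongs to $Y\cap H^\infty(\R)$, satisfies Hypothesis~\ref{hyp} with the same $x_0$, obeys $\|y_{0,n}\|_{L^1}\le\|y_0\|_{\M}$ by \eqref{estyn} and converges to $u_0$ in $H^1(\R)\cap W^{1,1}(\R)$ by \eqref{mm}. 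By Proposition~\ref{smoothWP} each datum generates a \emph{global} smooth solution $u_n$, globality being exactly the last assertion of that proposition for data with $L^1$ momentum density satisfying Hypothesis~\ref{hyp}. Using the conservation of $E$ (hence of $\|\cdot\|_{L^2}$ by \eqref{1.5}), the bound $\|u_n(t)\|_{L^\infty}\le\tfrac12\|y_n(t)\|_{L^1}$ coming from $u_n=p\ast y_n$, and \eqref{estimatey}, I obtain a bound on $\|y_n(t)\|_{L^1}$, and thereby on $\|u_n(t)\|_{H^1}$, that is uniform in $n$ on each interval $[0,T]$; the equation \eqref{DP} then controls $\partial_t u_n$ in $L^\infty(0,T;L^2)$.

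Next, these uniform $H^1$-in-space and Lipschitz-in-time bounds give, via Arzel\`a--Ascoli, a subsequence with $u_n\to u$ in $C([0,T];L^2_{loc})$ and weakly-$\ast$ in $L^\infty(0,T;H^1)$, while $y_n=(1-\partial_x^2)u_n$ is bounded in $L^\infty(0,T;\M(\R))$ and converges weakly-$\ast$. The quadratic nonlinearities in \eqref{DP} pass to the limit by the local strong convergence (the nonlocal term being smoothing), producing a solution $u\in C(\R_+;H^1)\cap C^1(\R_+;L^2)\cap L^\infty_{loc}(\R_+;Y)$. The support property I would read off at the smooth level: the characteristic form of \eqref{DP2} along the flow $q_n$ of \eqref{defq} reads $y_n(t,q_n(t,x))\,q_{n,x}(t,x)^3=y_{0,n}(x)$, and since $q_{n,x}>0$ this forces $\mathrm{sgn}\,y_n(t,q_n(t,x))=\mathrm{sgn}\,y_{0,n}(x)$; hence $\supp y_n^{\mp}(t)$ is the $q_n(t,\cdot)$-image of $\supp y_{0,n}^{\mp}$, which splits at $q_n(t,x_0)$. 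Passing to the limit and using convergence of the flows transfers this to $x_0(t)=q(t,x_0)$, and conservation of $M$, $E$, $F$, valid for the smooth $u_n$ by direct differentiation, is preserved in the limit.

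The main obstacle is uniqueness. The structural input is the Oleinik-type one-sided bound obtained by propagating \eqref{dodo}: for each $t$, $u_x(t)\ge u(t)$ to the left of $x_0(t)$ and $u_x(t)\ge -u(t)$ to its right, so $u_x(t)\ge-\|u(t)\|_{L^\infty}$ everywhere, which is precisely the entropy condition precluding the non-uniqueness typical of transport with a merely Lipschitz velocity. I would prove uniqueness by comparing two solutions $u,\tilde u$ issuing from the same data: the difference solves the equation obtained from \eqref{DP}, and estimating $u-\tilde u$ in $L^2$ while controlling the momentum densities in a weak norm, the one-sided bound closes a Gr\"onwall inequality. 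Equivalently one may pass to the Lagrangian variables, in which the transport of $y$ becomes an ODE system whose right-hand side is rendered Lipschitz by the sign and support structure; this is the route followed in \cite{CM,MR2271927}.

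Finally, granted uniqueness, the continuity statements follow by a soft argument. For a bounded-in-$Y$ sequence $u_{0,n}\to u_0$ in $H^1$ satisfying Hypothesis~\ref{hyp}, the uniform estimates above hold, so every subsequence of $\{u_n\}$ has a further subsequence converging as above to a weak solution with data $u_0$; by uniqueness this limit is $u$, whence the full sequence converges and $y_n=(1-\partial_x^2)u_n$ converges weakly-$\ast$, giving \eqref{cont3}. Strong $L^2(\R)$-convergence for each fixed $t$ then follows from $E(u_n(t))=E(u_{0,n})\to E(u_0)=E(u(t))$ together with the weak $L^2$-convergence, since $\|\cdot\|_{\H}=\sqrt{E}$ is a Hilbert norm equivalent to $\|\cdot\|_{L^2}$. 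The upgrade to the strong convergence \eqref{cont2} in $C([0,T];H^1)$, and its uniformity in $t$, is obtained as in \cite{MR2271927} by exploiting the measure convergence \eqref{cont3} through the representation $u_n=p\ast y_n$ together with the time-equicontinuity provided by \eqref{DP}.
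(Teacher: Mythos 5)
Your route is the same as the paper's: like you, the paper takes Assertion 1 (existence, uniqueness, support splitting) from \cite{MR2271927}, builds the limit solution from the smooth solutions emanating from the mollified data \eqref{app} using the bound \eqref{estimatey} plus Helly/Aubin--Lions/Arzel\`a--Ascoli compactness, identifies the limit by uniqueness, upgrades the weak convergence to strong $C([0,T];L^2(\R))$ convergence via conservation of the Hilbert norm $E$, interpolates with the uniform $H^{\frac{3}{2}-}$ bound coming from the $\M$-bound on $y$ to reach \eqref{cont2}, and obtains the conservation of $F$ by passing to the limit from the smooth level.

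There is, however, one concrete gap in your final paragraph. For a \emph{general} sequence $\{u_{0,n}\}$ bounded in $Y$, your claim that ``the uniform estimates above hold'' is not justified as written: the growth estimate \eqref{estimatey} is available only for smooth ($H^3$) solutions with $y_0\in L^1(\R)$, whereas the solutions $u_n$ emanating from non-smooth $u_{0,n}\in Y$ are merely the weak solutions of Assertion 1, for which no quantitative, $n$-uniform bound on $\sup_{[0,T]}\Vert y_n(t)\Vert_{\M}$ has been established at that point; without it, neither the compactness step nor the $H^{\frac{3}{2}-}$ interpolation can be launched. The paper resolves this by an explicit two-step bootstrap: it first proves \eqref{cont3} for the particular mollified sequence \eqref{app}, then passes to the limit in \eqref{estimatey} (weak-$*$ lower semicontinuity of the $\M$-norm) to obtain the estimate \eqref{estimatey2}, valid for every weak solution with data in $Y$ satisfying Hypothesis \ref{hyp}, and only then reruns the same compactness/uniqueness argument for arbitrary bounded sequences. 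Your own construction contains all the ingredients for this bootstrap, but the step must be made explicit. A smaller remark: your uniqueness discussion leans on the one-sided Oleinik-type bound from \eqref{dodo}, but that is not what drives uniqueness here; in the class $L^\infty_{loc}(\R_+;Y)$ one has $u_x\in{\mathcal BV}(\R)\subset L^\infty(\R)$ by \eqref{bv}, and the Gr\"onwall estimate (in $L^2$ or $W^{1,1}$, as in \cite{CM,MR2271927}) closes with this two-sided bound alone --- Hypothesis \ref{hyp} is what secures \emph{global existence}, not uniqueness.
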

\begin{proof} Assertion 1. is proved in  \cite{MR2271927} except the conservation of $F(u)$. But this is clearly a direct consequence of the conservation of $F $ for smooth solutions together with \eqref{cont2}. So let us prove Assertion  2. We first assume that 
$\{u_{0,n}\}  $ is the sequence defined in \eqref{app}.  
In view of  the conservation of $ \H $ and \eqref{estimatey}, the sequence $ \{u_n\} $ of smooth solutions to the DP equation emanating from $\{u_{0,n}\} $ is bounded in $ C([0,T]; H^1) \cap L^\infty(]0,T[; Y) $ for any fixed $ T>0 $.  
 Therefore, there exists $ w\in  L^\infty(\R_+;H^1(\R)) $ with $ (1-\partial_x^2) w \in 
   L^\infty_{loc}(\R_+; {\mathcal M(\R)}) $ such that, for any $ T>0$,
   $$
     u_n \weaktendsto{n\to\infty} w \in L^\infty(]0,T[; H^1(\R))  \quad \mbox{and}\quad (1-\partial_x^2) u_n \weaktendsto{n\to\infty} \hspace*{-3mm} \ast \; (1-\partial_x^2) w  \mbox{ in } L^\infty(]0,T[; {\mathcal M}(\R))  \; .
   $$
  Moreover, in view of \eqref{DP}, $ \{\partial_t u_n\} $ is bounded in $L^\infty(]0,T[; L^2(\R) \cap L^1(\R) )$ and Helly's,  Aubin-Lions compactness and Arzela-Ascoli theorems  ensure that $ w $ is a solution to \eqref{DP} that belongs to $ C_{w}([0,T]; H^1(\R)) $ with  $ w(0)=u_0 $. In particular, $ w_t\in L^\infty(]0,T[; L^2(\R)) $ and thus $ w\in C([0,T];L^2(\R)) $. Since $ w\in L^\infty(]0,T[; H^{\frac{3}{2}-} (\R))$, this  actually implies that 
   $ w\in C([0,T];  H^{\frac{3}{2}-}(\R)) $ and thus $ w_t\in C([0,T] ; L^2(\R))$. Therefore, 
 $w$  belongs to the uniqueness class which ensures that $ w=u$ and  that \eqref{cont3} holds for this sequence. In particular passing to the limit in \eqref{estimatey} we infer that for any $ u_0\in Y $ satisfying Hypothesis \ref{hyp} it holds
 \begin{equation}\label{estimatey2}
\|y(t)\|_{\M}  \le e^{3t^2 \|u_0\|_{L^2} +2 t \|u_0\|_{L^\infty} } \|y_0\|_{\M} , \forall t\in \R_+ \; .
  \end{equation}
With \eqref{estimatey2} in hands, we can now proceed exactly in the same way but for any sequence $ \{u_{0,n}\}$
 bounded in  $ Y $  that converges to $ u_0 $ in $H^1(\R) $. This shows that  \eqref{cont3} holds. 
  Finally, the conservation of $ E(\cdot) $ together with  the  weak convergence result in  $C_{w}([0,T]; H^1(\R)) $ 
   lead to  a strong convergence result in $C([0,T]; L^2(\R)) $ that leads to \eqref{cont2} by using that 
   $u\in L^\infty_{loc} (\R_+; H^{\frac{3}{2}-} (\R))$.
\end{proof}

In the sequel, we will make a constant use of the following properties of the flow-map $q(\cdot,\cdot) $ established for instance in 
\cite{ZYin} : Under the hypotheses of Proposition \ref{weakGWP}, 
\begin{enumerate}
\item 
The mapping $q(t,\cdot)$ is an increasing diffeomorphism of $\R$ with
\begin{equation}\label{13}
q_x(t,x)=exp\Big(\int_0^tu_x\big(s,q(s,x)\big)ds\Big)>0,\qquad\forall(t,x)\in\R_{+}\times\R.
\end{equation}
 \item  If moreover $u_0\in H^3(\R) $  then  
\begin{equation}\label{13b}
y\big(t,q(t,x)\big)q_x^3(t,x)=y_0(x),\qquad\forall(t,x)\in\R_{+}\times\R.
\end{equation}
In particular, for all $ t\ge 0 $, 
\begin{equation}\label{13c}
y\big(t,x_0(t))\big)=y\big(t,q(t,x_0)\big)=0 ,\qquad\forall(t,x)\in\R_{+}\times\R.
\end{equation}
\end{enumerate}



\section{Some uniform $ L^\infty $-estimates}
In \cite{LY} it is proven that as far as the solution to the DP equation stays smooth, its $ L^\infty $-norm can be bounded by a polynomial function of time with coefficients that depend only on the $ L^2 $ and $ L^\infty $-norm of the initial data. In this section we first improve this result under  Hypothesis 1 by showing that the solution is then  bounded in positive times by a constant that only depends on the $L^2$-norm of the initial data. This result is not directly needed in our work but we think that it has its own interest.  In a second time we use the same type of arguments to prove that any function that is $ L^2$-close to a peakon profile and satisfies Hypothesis 1, is actually $ L^\infty $-close to the peakon profile. This last result  will be very useful for our work and will for instance enable us to prove that as far as $u $ stays $ L^2$-close to a translation of a peakon profile, the growth of the total variation of its momentum density can be control by an exponential function of the  time but with a small constant in front of the time. This will be the aim of the last lemma of this section. 

\begin{lemma}\label{LemmaPP0}
For any  $u_0\in Y$ satisfying Hypothesis 1,   the associated solution $ u\in C(\R_+; H^1) $ to \eqref{DP}  given by Proposition \ref{weakGWP} satisfies
\begin{equation}\label{u0uinf}
\Vert u(t)\Vert_{L^{\infty}(\R)}\leq 2(1+\sqrt{2})\Vert u_0\Vert_{\mathcal{H}}, \quad \forall t\in \R_+.
\end{equation}
\end{lemma}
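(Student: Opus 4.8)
The plan is to reduce the time-dependent statement to a purely static inequality for functions satisfying Hypothesis \ref{hyp}, and then to exploit the conservation of $E$. Since $E(\cdot)$ is a conservation law (Proposition \ref{weakGWP}) and $\|u\|_{\H}=\sqrt{E(u)}$, one has $\|u(t)\|_{\H}=\|u_0\|_{\H}$ for all $t\ge 0$. Moreover, by Proposition \ref{weakGWP}, for each $t$ the profile $u(t)$ still satisfies Hypothesis \ref{hyp}, with center $x_0(t)=q(t,x_0)$. Consequently it suffices to establish the static estimate
\begin{equation*}
\|w\|_{L^\infty}\le \sqrt 2\,\|w\|_{L^2}
\end{equation*}
for every $w$ satisfying Hypothesis \ref{hyp}, and then to combine it with \eqref{1.5} and the conservation of $\H$ to obtain $\|u(t)\|_{L^\infty}\le \sqrt2\,\|u(t)\|_{L^2}\le 2\sqrt 2\,\|u_0\|_{\H}\le 2(1+\sqrt 2)\,\|u_0\|_{\H}$.

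To prove the static estimate I would first reduce to smooth solutions: approximating $u_0$ by the mollified data $u_{0,n}$ of \eqref{app}, which belong to $H^\infty(\R)\cap Y$, satisfy Hypothesis \ref{hyp} with the same $x_0$ and generate global smooth solutions $u_n$ by Proposition \ref{smoothWP}, it is enough to bound $\|u_n(t)\|_{L^\infty}$ and to pass to the limit at the end via the convergence $u_n\to u$ in $C([0,T];H^1)\hookrightarrow C([0,T];L^\infty)$ furnished by \eqref{cont2}. Fix such a smooth $w:=u_n(t)$; it satisfies the pointwise inequalities of \eqref{dodo}, namely $w_x\ge w$ on $]-\infty,x_0(t)[$ and $w_x\ge -w$ on $]x_0(t),+\infty[$, equivalently $e^{-x}w$ is non-decreasing on the left of $x_0(t)$ and $e^{x}w$ is non-decreasing on its right.

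The heart of the argument is the localization of the extrema. If $M:=\max_{\R} w>0$ is attained at $\bar x$, then necessarily $\bar x\ge x_0(t)$: otherwise $w_x(\bar x)=0$ while $w_x\ge w=M>0$ at $\bar x$, a contradiction. Since $e^{x}w$ is non-decreasing on $]x_0(t),+\infty[$, hence on $[\bar x,+\infty[$, we get $w(x)\ge M\,e^{\bar x-x}>0$ for all $x\ge\bar x$, whence
\begin{equation*}
\|w\|_{L^2}^2\ge \int_{\bar x}^{+\infty} w^2\,dx\ge M^2\int_{\bar x}^{+\infty} e^{2(\bar x-x)}\,dx=\frac{M^2}{2},
\end{equation*}
i.e. $\max w\le\sqrt 2\,\|w\|_{L^2}$. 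Symmetrically, if $m:=\min_{\R}w<0$ is attained at $\underline x$, the inequality $w_x\ge -w$ on the right of $x_0(t)$ forces $\underline x\le x_0(t)$, the monotonicity of $e^{-x}w$ on $]-\infty,x_0(t)[$ gives $w(x)\le m\,e^{x-\underline x}<0$ for $x\le\underline x$, and integrating over $]-\infty,\underline x]$ yields $-\min w\le\sqrt 2\,\|w\|_{L^2}$. Combining the two cases gives $\|w\|_{L^\infty}=\max(\max w,-\min w)\le\sqrt 2\,\|w\|_{L^2}$, which is the desired static estimate (and in fact yields a constant slightly better than the stated one).

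I expect the only delicate point to be this localization of the maximum and minimum. For the smooth approximations $u_n$ it is immediate from $w_x(\bar x)=0$ as above, which is precisely why I would argue first on smooth solutions and only pass to the limit at the very end; working directly in $Y$ one would instead have to invoke the one-sided derivatives of the $BV$ function $w_x$ at the extremal points. Everything else — the conservation of $E$, the norm equivalence \eqref{1.5}, and the passage to the limit through \eqref{cont2} — is routine.
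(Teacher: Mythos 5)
Your proof is correct, but it follows a genuinely different route from the paper's. The paper never localizes the extrema of $u(t)$ relative to $x_0(t)$: it only uses the consequence $u_x\ge -|u|$ of \eqref{dodo}, valid on all of $\R$, together with a pigeonhole (mean-value for integrals) argument on intervals of unit length. Concretely, for any $x$ it produces a nearby point $\eta$ with $|u(t,\eta)|\le 2\Vert u_0\Vert_{\H}$, integrates the differential inequality over the bounded interval $[\eta,x]$ with Cauchy--Schwarz to get the lower bound $u(t,x)\ge -2(1+\sqrt2)\Vert u_0\Vert_{\H}$, and rules out a large positive value by the symmetric contradiction argument to the right. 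That version works directly for $u(t)\in Y$ (where $u\in W^{1,1}$ suffices to integrate $u_x$), with no need to pass through smooth approximations. Your argument instead exploits the full sign structure of \eqref{dodo} to place the global maximum to the right of $x_0(t)$ and the global minimum to its left, and then integrates the resulting exponential lower (resp.\ upper) bound over an infinite half-line; this buys the sharper constant $2\sqrt2$ in place of $2(1+\sqrt2)$, at the price of needing $w_x(\bar x)=0$ at the extremum, hence the preliminary reduction to the smooth approximations $u_n$ and the limit passage through \eqref{cont2} — a step you correctly identify and handle. Both proofs rest on the same two pillars (the one-sided inequality \eqref{dodo} and the conservation of $E$ via \eqref{1.5}), so your version is a legitimate, slightly stronger alternative.
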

\begin{proof}
We fix $t\in \R_+ $, $ x\in \R $  
 and denote by $\mathcal{E}(x)$ the integer part of $x$. Since $u(t,\cdot)\in H^1(\R)\hookrightarrow C(\R)$, 
  the Mean-Value theorem for integrals together with \eqref{1.5} and the conservation of $ \|\cdot\|_{\H} $ ensure that there exists  $\eta\in \big[\mathcal{E}(x)-1,\mathcal{E}(x)\big]$ such that
$$
u^2(t,\eta)=\int_{\mathcal{E}(x)-1}^{\mathcal{E}(x)}u^2(t,\theta)d\theta\leq\Vert u(t,\cdot)\Vert_{L^2(\R)}^2\leq4\Vert u(t,\cdot)\Vert_{\mathcal{H}}^2=4\Vert u_0\Vert_{\mathcal{H}}^2.
$$
Therefore,  by \eqref{dodo} and \eqref{1.5}, since $ 0\le x-\eta\le 2 $, one may write
\begin{eqnarray}
u(t,x)& = & u(t,\eta)+\int^{x}_{\eta}u_{\theta}(t,\theta)d\theta\geq  -2\Vert u_0\Vert_{\mathcal{H}}
-\int_{\eta}^x |u(t,\theta)| \, d\theta \geq  -2\Vert u_0\Vert_{\mathcal{H}}
-2\sqrt{x-\eta}\Vert u_0\Vert_{\mathcal{H}}\nonumber \\
& \geq&  -2(1+\sqrt{2})\Vert u_0\Vert_{\mathcal{H}} \; .\label{e3}
\end{eqnarray}
Now, suppose that there exists $x_{\ast}\in\R$ such that $u(t,x_{\ast})>2(1+\sqrt{2})\Vert u_0\Vert_{\mathcal{H}}$.  Then, on one side the Mean-Value theorem for integrals similarly ensures that there exists  $\gamma\in \big[\mathcal{E}(x_{\ast})+1,\mathcal{E}(x_{\ast})+2\big]$ such that
$$
u^2(t,\gamma)=\int_{\mathcal{E}(x_{\ast})+1}^{\mathcal{E}(x_{\ast})+2}u^2(t,\theta)d\theta\leq\Vert u(t,\cdot)\Vert_{L^2(\R)}^2\leq4\Vert u_0\Vert_{\mathcal{H}}^2\; .
$$
On the other side, \eqref{dodo} again leads to 
\begin{equation}
u(t,\gamma)=u(t,x_{\ast})+\int_{x_{\ast}}^{\gamma}u_{\theta}(t,\theta)d\theta>2(1+\sqrt{2})\Vert u_0\Vert_{\mathcal{H}}-2\sqrt{\gamma-x_{\ast}}\Vert u_0\Vert_{\mathcal{H}}>2\Vert u_0\Vert_{\mathcal{H}}\; .
\end{equation}
The fact that  the two above estimates are not compatible completes the proof of the lemma.
\end{proof}

\begin{lemma}[$L^{\infty}$  approximations]\label{LemmaPP1}
Let  $ \psi\in W^{1,\infty}(\R)\cap L^2(\R) $  and  $u\in Y $, satisfying Hypothesis 1, then  
\begin{equation}
\|u-\psi\|_{L^{\infty}(\mathbb{R})}\le 2 \|u-\psi\|_{\mathcal{H}}^{2/3} \big(1+  \sqrt{2} \|u-\psi\|_{\mathcal{H}}^{2/3}+ 
\|\psi\|_{L^\infty}+\|\psi'\|_{L^\infty}\big) \; .
\label{PP22}
\end{equation}

In particular, for any $(c,r) \in \R^2 $ it holds 
\begin{equation}
\|u-\varphi_{c}(\cdot-r)\|_{L^{\infty}(\mathbb{R})}\le 2 \|u-\varphi_{c}(\cdot-r)\|_{\mathcal{H}}^{2/3} \big(1+  \sqrt{2} \|u-\varphi_{c}(\cdot-r)\|_{\mathcal{H}}^{2/3}+ 2 c\big) \; .
\label{PP2}
\end{equation}
\end{lemma}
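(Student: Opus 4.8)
The plan is to run the mean-value/contradiction scheme of Lemma \ref{LemmaPP0}, but on an interval whose length $\ell$ is kept as a free parameter and optimized only at the very end. The one preliminary observation I would record is that \eqref{dodo} yields, \emph{on all of} $\R$, the single inequality $u_x\ge -|u|$ a.e.: on $]-\infty,x_0[$ one has $u_x\ge u\ge-|u|$ and on $]x_0,+\infty[$ one has $u_x\ge -u\ge -|u|$. Set $w:=u-\psi$ and $M:=\|u-\psi\|_{\mathcal H}$. Since $u\in Y\hookrightarrow W^{1,1}(\R)$ by \eqref{bv} and $\psi\in W^{1,\infty}(\R)$, the function $w$ is continuous and absolutely continuous, and \eqref{1.5} applied to $w$ gives $\|w\|_{L^2}\le 2M$. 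If $M=0$ there is nothing to prove, so assume $M>0$.

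I would then prove, for every fixed $\ell>0$ and every $x\in\R$, the two-sided bound
\begin{equation}
|w(x)|\le \frac{2M}{\sqrt\ell}+2M\sqrt\ell+\ell\,\|\psi\|_{L^\infty}+\ell\,\|\psi'\|_{L^\infty}\,. \nonumber
\end{equation}
For the lower bound, the mean-value theorem for integrals applied to the continuous function $w^2$ on $[x-\ell,x]$ produces $\eta\in[x-\ell,x]$ with $w^2(\eta)\,\ell=\int_{x-\ell}^x w^2\le \|w\|_{L^2}^2\le 4M^2$, hence $|w(\eta)|\le 2M/\sqrt\ell$; writing $w(x)=w(\eta)+\int_\eta^x(u_\theta-\psi')\,d\theta$ and using $u_\theta\ge-|u|$, $|u|\le|w|+\|\psi\|_{L^\infty}$, $|\psi'|\le\|\psi'\|_{L^\infty}$ and $\int_\eta^x|w|\le\sqrt{x-\eta}\,\|w\|_{L^2}\le 2M\sqrt\ell$ gives the displayed lower bound. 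For the upper bound I argue by contradiction exactly as in Lemma \ref{LemmaPP0}: if $w(x_*)$ exceeds the right-hand side above, the same mean-value argument on $[x_*,x_*+\ell]$ yields $\gamma\in[x_*,x_*+\ell]$ with $|w(\gamma)|\le 2M/\sqrt\ell$, and the identity $w(\gamma)=w(x_*)+\int_{x_*}^\gamma(u_\theta-\psi')\,d\theta$ together with $u_\theta\ge-|u|$ forces $w(\gamma)>2M/\sqrt\ell\ge|w(\gamma)|$, a contradiction.

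Finally I would optimize in $\ell$. Keeping $\ell$ free is precisely what allows the $\psi$-dependent part to be made small: choosing $\ell:=M^{2/3}$ converts every term into a multiple of $M^{2/3}$, since $2M/\sqrt\ell=2M^{2/3}$, $2M\sqrt\ell=2M^{4/3}$ and $\ell(\|\psi\|_{L^\infty}+\|\psi'\|_{L^\infty})=M^{2/3}(\|\psi\|_{L^\infty}+\|\psi'\|_{L^\infty})$, whose sum is bounded by $2M^{2/3}\bigl(1+\sqrt2\,M^{2/3}+\|\psi\|_{L^\infty}+\|\psi'\|_{L^\infty}\bigr)$; this is \eqref{PP22}. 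The particular case \eqref{PP2} then follows by taking $\psi=\varphi_c(\cdot-r)$ and using $\|\varphi_c\|_{L^\infty}+\|\varphi_c'\|_{L^\infty}=2|c|$ (which equals $2c$ for $c>0$). The only genuinely delicate points are collapsing the two one-sided gradient bounds of \eqref{dodo} into the single global inequality $u_x\ge-|u|$, and recognizing that the choice $\ell=M^{2/3}$ is dictated not by minimizing the $u$-terms (that would give $\ell=1$) but by forcing the $\psi$-dependent contribution to scale like $M^{2/3}$ instead of remaining $O(1)$ — which is exactly the feature needed when \eqref{PP22} is later applied with $\psi$ a peakon profile.
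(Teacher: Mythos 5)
Your proof is correct and follows essentially the same route as the paper's: the mean-value theorem for integrals on an interval of length $M^{2/3}$, the collapsed one-sided bound $u_x\ge -|u|$ coming from \eqref{dodo}, Cauchy--Schwarz, and a contradiction argument for the upper bound; keeping the interval length $\ell$ as a free parameter rather than fixing $\alpha=\|u-\psi\|_{\mathcal H}^{2/3}$ at the outset is only a cosmetic difference (and in fact yields a marginally sharper constant). The remark that the exponent $2/3$ is forced by the $\psi$-dependent term, and the observation that the stated constant $2c$ in \eqref{PP2} should read $2|c|$ for $c<0$, are both accurate.
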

\noindent
\begin{proof}
We first notice that \eqref{PP2} follows directly from \eqref{PP22} by taking $ \psi=\varphi_c(\cdot-r) $ and using that 
$\|\varphi_c\|_{L^\infty}=\|\varphi_c'\|_{L^\infty}=c$.

Let us now prove \eqref{PP2}. 
We set $ \alpha=\|u-\psi\|_{\mathcal{H}}^{2/3} $.
Fixing  $x\in\R$, there exists $ k\in \Z $ such $x\in  \big[k\alpha,(k+1)\alpha\big[$. Therefore, applying the Mean-Value theorem on the interval   $\big[(k-1)\alpha,k\alpha\big]$, we obtain that there exists  $\eta\in\big[(k-1)\alpha,k\alpha\big]$ such that
\begin{equation}\label{contractss}
\big[u(\eta)-\psi(\eta)\big]^2=\frac{1}{\alpha}\int_{(k-1)\alpha}^{k\alpha}\big[u(\theta)-\psi(\theta)\big]^2d\theta\leq\frac{4}{\alpha}\Vert u-\psi\Vert_{\mathcal{H}}^2=4 \alpha^2.
\end{equation}
Now,  in view of  \eqref{dodo}, we get
\begin{equation}
u(x)-\psi(x)=u(\eta)-\psi(\eta)+\int^{x}_{\eta}[u_{x}(\theta)-\psi'(\theta)]d\theta\geq-2\alpha-\sqrt{2\alpha}\, 
\big\Vert\vert u\vert+|\psi'|\big\Vert_{L^2(](k-1)\alpha,(k+1)\alpha[)}.
\end{equation}
and  the triangular inequality together with \eqref{1.5}  yield
$$
\big\Vert |u|+|\psi' \vert\big\Vert_{L^2(](k-1)\alpha,(k+1)\alpha[)} \le 
  \big\Vert |u-\psi |+|\psi|+|\psi'|\big\Vert_{L^2(](k-1)\alpha,(k+1)\alpha[)}\le 2 \|u-\psi\|_{\H} +
  \sqrt{2\alpha} \, (\|\psi\|_{L^\infty}+\|\psi'\|_{L^\infty}).
$$
We thus eventually   get 
\begin{equation}
u(x)-\psi(x)\geq-2\alpha \big(1+  \sqrt{2}\alpha  + \|\psi\|_{L^\infty}+\|\psi'\|_{L^\infty}\big).
\end{equation}
Now, suppose  that there exists $x_{\ast}\in\R$ such that  
$$
u(x_{\ast})-\psi(x_{\ast})>2 \alpha \big(1+  \sqrt{2} \alpha+\|\psi\|_{L^\infty}+\|\psi'\|_{L^\infty}\big)
$$
 Similarly,  there exists $k_{\ast}\in\R$ such that $x_{\ast}\in\big[k_{\ast}\alpha,(k_{\ast}+1)\alpha\big[$ and applying the Mean-Value theorem for integrals on $\big[(k_{\ast}+1)\alpha,(k_{\ast}+2)\alpha\big]$  we obtain that  there exists   $\gamma\in\big[(k_{\ast}+1)\alpha,(k_{\ast}+2)\alpha\big]$ such that, on one hand, 
$$
\big[u(\gamma)-\psi(\gamma)\big]^2=\frac{1}{\alpha}\int_{(k_{\ast}+1)\alpha}^{(k_{\ast}+2)\alpha}\big[u(\theta)-\psi(\theta)\big]^2d\theta 
 \le  4 \alpha^2 \; . $$
On the other hand, proceeding as above  we get
\begin{align*}
u(\gamma)-\psi(\gamma) & =u(x_{\ast})-\psi(x_{\ast})+\int_{x_{\ast}}^{\gamma}[u_{x}(\theta)-\psi'(\theta)]d\theta\\
& >2\alpha \Big(1+  \sqrt{2} \alpha+ \|\psi\|_{L^\infty}+\|\psi'\|_{L^\infty}\Big)-\sqrt{2\alpha}\Bigl(2\alpha^{3/2}+ 
\sqrt{2\alpha} (\|\psi\|_{L^\infty}+\|\psi'\|_{L^\infty}) \Bigr)>  2 \alpha \; .
\end{align*}
The incompatibility of the two above estimates completes the proof of  the lemma.
\end{proof}
\begin{lemma}
Let $u_0\in Y $ satisfying Hypothesis \ref{hyp} and $ u\in C(\R_+;H^1) \cap L^\infty(\R_+; Y) $ be the associated solution to DP given by Proposition \ref{weakGWP}.
If for some $ c\ge 0$, $0<\alpha<1 $ and $ T>0 $ it holds 
\begin{equation}\label{assum1}
\sup_{t\in[0,T]}\inf_{r\in\R} \Vert u(t,\cdot)-\varphi_c(\cdot-r)\Vert_{\mathcal{H}}\leq\alpha,
\end{equation}
then 
\begin{equation}\label{50b}
u(t)\ge -4\alpha^\frac{2}{3}(2+c),  \quad \forall t\in  [0,T] 
\end{equation}
and
\begin{equation}\label{50}
\sup_{t\in [0,T]} \Vert y(t)\Vert_{\M}\leq \big(1+2e^{8\alpha^\frac{2}{3}(2+c)t}\big)\Vert y_0\Vert_{\M}.
\end{equation}
\end{lemma}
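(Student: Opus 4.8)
The plan is to prove the pointwise lower bound \eqref{50b} first, and then exploit it together with the conservation of the momentum $ M $ to control the growth of $ \|y(t)\|_{\M} $; the conceptual heart of the argument is that \emph{only the negative part of the momentum needs to be estimated directly}.

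\medskip
\noindent\textbf{Step 1: the lower bound \eqref{50b}.} Fix $ t\in[0,T] $. By Proposition \ref{weakGWP}, $ u(t) $ lies in $ Y $ and satisfies Hypothesis \ref{hyp} (with $ x_0 $ replaced by $ x_0(t) $), so the $ L^\infty $-approximation estimate \eqref{PP2} of Lemma \ref{LemmaPP1} applies. Given any $ \alpha'>\alpha $, assumption \eqref{assum1} provides $ r\in\R $ with $ \|u(t)-\varphi_c(\cdot-r)\|_{\H}\le\alpha' $, whence
\[
\|u(t)-\varphi_c(\cdot-r)\|_{L^\infty}\le 2(\alpha')^{2/3}\big(1+\sqrt{2}\,(\alpha')^{2/3}+2c\big).
\]
Since $ \varphi_c\ge 0 $ for $ c\ge 0 $, this forces $ u(t,x)\ge -2(\alpha')^{2/3}(1+\sqrt{2}\,(\alpha')^{2/3}+2c) $ for every $ x $. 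Letting $ \alpha'\downarrow\alpha $ and using $ 0<\alpha<1 $ (so $ \sqrt{2}\,\alpha^{2/3}<\sqrt{2}<3 $ and hence $ 1+\sqrt{2}\,\alpha^{2/3}+2c<2(2+c) $) yields \eqref{50b}; the embedding $ u\in C(\R_+;H^1)\hookrightarrow C(\R_+;C_b(\R)) $ makes the bound valid for every $ t\in[0,T] $.

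\medskip
\noindent\textbf{Step 2: reduction to the negative part, for smooth solutions.} Assume first $ u_0\in Y\cap H^\infty $, so that $ y_0\in L^1 $ and the sign structure is preserved along characteristics. By conservation of $ M $, \eqref{conserM},
\[
\|y^{+}(t)\|_{L^1}-\|y^{-}(t)\|_{L^1}=\int_\R y(t)=\int_\R y_0=\|y_0^{+}\|_{L^1}-\|y_0^{-}\|_{L^1},
\]
hence $ \|y(t)\|_{L^1}=\|y_0^{+}\|_{L^1}-\|y_0^{-}\|_{L^1}+2\|y^{-}(t)\|_{L^1} $, and it suffices to bound $ \|y^{-}(t)\|_{L^1} $. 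Performing the change of variables $ x=q(t,z) $ and using \eqref{13b} gives $ |y(t,q(t,z))|=|y_0(z)|/q_x^3(t,z) $, so that, the negative part being carried by the characteristics starting from $ \{z<x_0\} $ (which stay in $ \{x<x_0(s)\} $ for all $ s $, since $ q(s,\cdot) $ is increasing with $ q(s,x_0)=x_0(s) $),
\[
\|y^{-}(t)\|_{L^1}=\int_{\{z<x_0\}}\frac{y_0^{-}(z)}{q_x^2(t,z)}\,dz.
\]
On this region, Hypothesis \ref{hyp} for $ u(s) $ together with \eqref{dodo} and \eqref{50b} give $ u_x(s,q(s,z))\ge u(s,q(s,z))\ge -4\alpha^{2/3}(2+c) $, so that by \eqref{13},
\[
q_x^{-2}(t,z)=\exp\Big(-2\int_0^t u_x(s,q(s,z))\,ds\Big)\le e^{8\alpha^{2/3}(2+c)t}.
\]
Thus $ \|y^{-}(t)\|_{L^1}\le e^{8\alpha^{2/3}(2+c)t}\|y_0^{-}\|_{L^1} $, and substituting above — adding and subtracting $ \|y_0^{-}\|_{L^1} $ and bounding $ \|y_0^{\pm}\|_{L^1}\le\|y_0\|_{L^1} $ — produces $ \|y(t)\|_{L^1}\le\big(1+2e^{8\alpha^{2/3}(2+c)t}\big)\|y_0\|_{L^1} $.

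\medskip
\noindent\textbf{Step 3: removing the smoothness assumption.} For general $ u_0\in Y $ satisfying Hypothesis \ref{hyp}, I would use the approximation \eqref{app}, giving $ u_{0,n}\in Y\cap H^\infty $ with $ \|y_{0,n}\|_{L^1}\le\|y_0\|_{\M} $ by \eqref{estyn} and $ u_{0,n}\to u_0 $ in $ H^1 $. By the continuity statement \eqref{cont2}, $ \sup_{[0,T]}\|u_n(t)-u(t)\|_{\H}\to 0 $, so each $ u_n $ satisfies \eqref{assum1} with $ \alpha $ replaced by $ \alpha_n:=\sup_{[0,T]}\inf_{r}\|u_n(t)-\varphi_c(\cdot-r)\|_{\H} $, where $ \limsup_n\alpha_n\le\alpha $. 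Applying Step 2 to $ u_n $ and passing to the limit via the weak-$*$ convergence \eqref{cont3} and the lower semicontinuity of $ \|\cdot\|_{\M} $ yields \eqref{50}, exactly as \eqref{estimatey2} was obtained from \eqref{estimatey}. The step requiring the most care is precisely this limiting procedure — in particular verifying that the lower bound \eqref{50b} transfers to the approximants with a constant converging to $ 4\alpha^{2/3}(2+c) $ — whereas the genuinely new idea is the reduction, through conservation of $ M $, to the negative part of the momentum: this is exactly the part supported to the left of $ x_0(t) $, where \eqref{dodo} and \eqref{50b} force the small exponential rate $ 8\alpha^{2/3}(2+c) $. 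A direct attempt to bound the positive part would only give a rate of order $ c $, which is why the use of $ M $ is essential.
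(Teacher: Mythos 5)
Your proof is correct and follows essentially the same route as the paper: the $L^\infty$ lower bound obtained from Lemma \ref{LemmaPP1} and the positivity of $\varphi_c$, the inequality $u_x\ge u$ from \eqref{dodo} on the support of $y^-$, the conservation of $M$ to transfer the decay of the negative part to a bound on the full mass, and a density argument via \eqref{app} and Proposition \ref{weakGWP}. The only (immaterial) difference is that you estimate $\Vert y^{-}(t)\Vert_{L^1}$ through the Lagrangian identity \eqref{13b} and the explicit bound on $q_x^{-2}$ along characteristics, whereas the paper differentiates $\int_\R y^-$ in time using \eqref{DP2} and applies Gr\"onwall's inequality — the two computations are equivalent.
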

\begin{proof}
According to Proposition \ref{weakGWP},  approximating $ u_0 $ by the sequence $u_{0,n} $ given by \eqref{app},  it suffices to prove the result 
 for smooth initial data $ u_0\in  Y\cap H^\infty(\R)$ satisfying Hypothesis \ref{hyp}.
We notice that since $ \varphi_c>0 $ on $ \R $, \eqref{assum1} together with Lemma \ref{LemmaPP1} ensure that 
 for all $ t\in [0,T]$, 
$$
u(t,\cdot)\ge -2\alpha^{2/3} \big(1+  \sqrt{2} \alpha^{2/3}+ 2 c\big)\ge -4\alpha^\frac{2}{3}(2+c)  \quad \text{on}\;\R \; .
$$
Therefore, according to  \eqref{DP2}, \eqref{defq}, \eqref{13c} and \eqref{dodo}, we have
\begin{align*}
\frac{d}{dt}\int_{\R}y^{-}(t,x)dx=-\frac{d}{dt}\int_{-\infty}^{q(t,x_0)}y(t,x)dx&=2\int_{-\infty}^{q(t,x_0)}u_x(t,x)y(t,x)dx\\
&\leq-2\int_{-\infty}^{q(t,x_0)}\left(-u(t,x)\right)y(t,x)dx\\
&\leq 8  \alpha^\frac{2}{3}(2+c)  \int_{\R}y^{-}(t,x)dx.
\end{align*}
Hence, Gr$\ddot{\text{o}}$nwall's inequality yields
\begin{equation}
\int_{\R}y^{-}(t,x)dx\leq e^{8\alpha^\frac{2}{3} (2+c)  t}\int_{\R}y^{-}_0(x)dx.
\end{equation}
Moreover, since, according to Proposition \ref{smoothWP},  $\dsp M(u)=\int_{\R} y $ is conserved for positive times, it holds $\dsp \int_{\R}y^{+}(t,x)dx=\int_{\R}y_0(x)dx+\int_{\R}y^{-}(t,x)dx$ and thus  
\begin{equation}\label{l1y}
\Vert y(t,\cdot)\Vert _{L^1(\R)}\leq \big(1+2e^{8\alpha^\frac{2}{3}(2+c)  t}\big)\Vert y_0\Vert_{L^1(\R)}.
\end{equation}
\end{proof}


\section{A dynamic estimate on $y^-$}\label{section4y-}
In this section we assume that $\sup_{t\in [0,T]} \inf_{r\in\R} \|u(t)-\varphi_c(\cdot-r)\|_{\H} <\varepsilon $
for some $ T>0 $ and some $ 0<\varepsilon<1  $ small enough.  Then we can construct a $ C^1$-function $ x \, :\, [0,T] \to \R $ such that
$ \sup_{t\in [0,T]}  \|u(t)-\varphi_c(\cdot-x(t))\|_{\H} \lesssim \varepsilon $ and we study the behavior of $ y^- $ in an  growing with time interval at the left of $x(t) $. 
\begin{lemma}\label{modulation}
There exist $ \tilde{\eps}_0>0 $, $0<\kappa_0<1 $ and $ K\ge 1 $ such that if a solution $ u\in C([0,T]; H^1(\R)) $ to 
\eqref{DP} satisfies for some $ c>0 $ and some function $ r \, : [0,T] \to \R $, 
\begin{equation}\label{assum3}
\sup_{t\in[0,T]} \Vert u(t,\cdot)-\varphi_c(\cdot-r(t))\Vert_{\mathcal{H}}< c \tilde{\eps}_0,
\end{equation}
then  there exist a  unique function $x\colon[0,T]\longrightarrow\R$ such that
\begin{equation}\label{nanaa}
\sup_{t\in[0,T]}\big\vert x(t)-r(t)\big\vert\le \kappa_0<\ln(3/2)
\end{equation}
and 
\begin{equation}\label{orto}
\int_{\R} v(t,x) \rho'(x-x(t)) \, dx =0 , \quad \forall t\in [0,T] \; .
\end{equation}
where $ v=(4-\partial_x^2)^{-1} u $ and $\rho=(4-\partial_x^2)^{-1}\varphi $. 
Moreover, $ x(\cdot) \in C^1([0,T]) $ with 
\begin{equation}\label{nanaa2}
\sup_{t\in[0,T]}\big\vert \dot{x}(t)-c\big\vert\leq \frac{c}{8},
\end{equation}
and if 
\begin{equation}\label{nana}
\sup_{t\in[0,T]}\big\Vert u(t,\cdot)-\varphi_c(\cdot-r(t))\big\Vert_{\H}< \eps ,
\end{equation}
 for some $ 0<\varepsilon\le  c \tilde{\eps}_0 $ then 
\begin{equation}\label{64}
\sup_{t\in[0,T]}\big\Vert u(t,\cdot)-\varphi_c(\cdot-x(t))\big\Vert_{\mathcal{H}}\leq K \varepsilon \; .
\end{equation}
\end{lemma}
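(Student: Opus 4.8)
The plan is to prove existence and uniqueness of $x(t)$ via the implicit function theorem applied to the orthogonality condition \eqref{orto}, and then upgrade the modulation estimate using a coercivity-type argument. First I would introduce the functional $Y(t,x):=\int_\R v(t,y)\,\rho'(y-x)\,dy$ and study it as a function of the shift parameter $x$ near $r(t)$. Since $\rho$ is even, $\rho'$ is odd, and the reference profile satisfies $\int_\R \rho_c(y-r)\,\rho'(y-r)\,dy=0$ by oddness; differentiating in $x$ gives $\partial_x Y(t,x)=-\int_\R v(t,y)\,\rho''(y-x)\,dy$, which evaluated at the exact peakon equals $-\int \rho_c\,\rho''=\int (\rho_c')^2>0$, i.e. a nondegenerate (strictly positive) value. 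By the smallness hypothesis \eqref{assum3}, writing $v=\rho_c(\cdot-r)+(v-\rho_c(\cdot-r))$ and noting that $\|v-\rho_c(\cdot-r)\|$ in the relevant norm is controlled by $\|u-\varphi_c(\cdot-r)\|_{\mathcal H}$ (since $v=(4-\partial_x^2)^{-1}u$ smooths), the functional $Y(t,\cdot)$ is a small perturbation of its profile value, and $\partial_x Y$ stays bounded away from zero on a fixed interval $|x-r(t)|\le\kappa_0$. The implicit function theorem then yields a unique $C^1$ solution $x(t)$ of $Y(t,x(t))=0$ with $|x(t)-r(t)|\le\kappa_0$, provided $\kappa_0$ is chosen smaller than $\ln(3/2)$ and $\tilde\eps_0$ is taken small enough; the $C^1$ regularity in $t$ and the formula for $\dot x$ follow because $v$ is $C^1$ in $t$ (from \eqref{DP3}) and $\partial_x Y\neq 0$.

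Next I would establish \eqref{nanaa2}. Differentiating the identity $Y(t,x(t))=0$ in $t$ gives $\dot x(t)=\partial_t Y/\partial_x Y$ evaluated along the curve. Using \eqref{DP3} to express $v_t=-\partial_x(1-\partial_x^2)^{-1}u^2$, I would substitute $u=\varphi_c(\cdot-r)+(u-\varphi_c(\cdot-r))$ and compute the leading term, which for the exact peakon reproduces the expected speed $c$ (the peakon travels at speed $c$, so its modulation parameter moves at rate $c$), while the remainder is controlled by $\|u-\varphi_c\|_{\mathcal H}$ times a constant. Choosing $\tilde\eps_0$ small guarantees the deviation is at most $c/8$. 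This is essentially a bookkeeping computation, using the explicit form of $\rho_c$ and $\varphi_c$ together with the convolution structure of $(4-\partial_x^2)^{-1}$ and $(1-\partial_x^2)^{-1}$.

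The estimate \eqref{64} is the genuinely delicate point, and I expect it to be the main obstacle. The issue is that \eqref{64} must hold with a \emph{fixed} constant $K$ independent of $\eps$, so one cannot simply use the triangle inequality through $r(t)$, which would only give a bound proportional to $\eps+\kappa_0$ with $\kappa_0$ a fixed (not small) number. Instead the orthogonality condition \eqref{orto} must be exploited: it forces $x(t)$ to be close to the actual ``center'' of $u(t)$, not merely within $\kappa_0$ of $r(t)$. The plan is to write $u(t,\cdot)-\varphi_c(\cdot-x(t))=\big(u(t,\cdot)-\varphi_c(\cdot-r(t))\big)+\big(\varphi_c(\cdot-r(t))-\varphi_c(\cdot-x(t))\big)$ and bound the second difference quantitatively in terms of $|x(t)-r(t)|$. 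The key is that the orthogonality relation, when expanded to first order around $r(t)$, shows $|x(t)-r(t)|\lesssim \|u(t)-\varphi_c(\cdot-r(t))\|_{\mathcal H}/\partial_x Y\lesssim \eps$ rather than merely $\le\kappa_0$: since $Y(t,r(t))=\int v\,\rho'(\cdot-r)$ equals the value for the exact peakon (zero) plus an error of size $\eps$, and $Y(t,x(t))=0$, the mean value theorem combined with the nondegeneracy $\partial_x Y\ge \mathrm{const}>0$ gives $|x(t)-r(t)|\le C\eps$. Feeding this linear-in-$\eps$ bound back into $\|\varphi_c(\cdot-r(t))-\varphi_c(\cdot-x(t))\|_{\mathcal H}\le \|\varphi_c'\|\,|x(t)-r(t)|\lesssim c\,\eps$ yields \eqref{64} with $K$ depending only on $c$ through fixed constants.

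One subtlety to handle carefully is the regularity and well-definedness of all integrals: since $u\in C([0,T];H^1)$ only, working with $v=(4-\partial_x^2)^{-1}u\in C([0,T];H^3)$ is essential, as $\rho'$ and $\rho''$ pair smoothly against $v$ and its derivatives, so every quantity above is well-defined and continuous in $t$. I would also verify the strict inequality $\kappa_0<\ln(3/2)$ is compatible by choosing $\kappa_0$ a priori (for instance so that $\varphi_c(\cdot-x)$ and $\varphi_c(\cdot-r)$ have overlapping supports of the maxima region), which only constrains $\tilde\eps_0$ from above and does not affect the final constant $K$.
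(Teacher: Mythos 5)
Your proposal follows essentially the same route as the paper: the implicit function theorem applied to the orthogonality functional $\int_{\R} v\,\rho'(\cdot-x)$ with the nondegeneracy $\partial_x Y=\Vert\rho'\Vert_{L^2}^2>0$, the quantitative bound $|x(t)-r(t)|\lesssim\eps$ coming out of the IFT and feeding a triangle inequality to get \eqref{64}, and time-differentiation of the orthogonality relation combined with the equation $v_t=-\frac{1}{2}\partial_x(1-\partial_x^2)^{-1}u^2$ to get \eqref{nanaa2}. The one point to tighten is the $c$-dependence: the paper runs the IFT on the normalized variable $\frac{1}{c}v$ near the unit profile $\rho$, so that the nondegeneracy constant (which in your unnormalized setting is of order $c$) exactly cancels the factor $c$ in $\Vert\varphi_c(\cdot-r)-\varphi_c(\cdot-x)\Vert_{\H}\lesssim c\,|x-r|$, yielding a constant $K$ genuinely independent of $c$ as the statement requires, rather than ``depending only on $c$ through fixed constants.''
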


\begin{proof}
We  follow  the same approach as in \cite{AK}, by requiring an orthogonality condition on $ v=(4-\partial_x^2)^{-1} u $ instead of $ u$.  This will be useful to get the $ C^1$-regularity of $ x(\cdot) $. In the sequel of the proof, we endow $ H^2(\R) $ with the norm (that is equivalent to the usual norm)
$$
\|v\|_{H^2}^2:=\int_{\R} 4 v^2+5v_x^2+v_{xx}^2 = \|(4-\partial_x^2) v \|_{\H}^2 
$$
where the last identity follows from \eqref{i}.
Let $0<\eps<1$. For $r\in\R$ we introduce the function $Y\colon (-\eps,\eps)\times B_{H^2}\big(\rho(\cdot-r),\eps\big)\longrightarrow\R$ defined by
$$
Y(y,v)=\int_{\R}\big[v(t,x)-\rho(x-r-y)\big]\rho'(x-r-y)dx.
$$
It is clearly that $Y\big(0,\rho(\cdot-r)\big)=0$ and that $Y$ is of class $C^1$. Moreover, by integration by parts,  it holds
$$
\frac{\partial Y}{\partial y}(y,v)=-\int_{\R}v(t,x)\rho''(x-r-y)dx.
$$
Hence, by integration by parts we may write
\begin{equation}\label{eded}
\frac{\partial Y}{\partial y}\big(0,\rho(\cdot-r)\big)=-\int_{\R}\rho(x-r)\partial_x^2\rho(x-r)dx=\Vert \partial_x\rho(\cdot-r)\Vert_{L^2(\R)}^2=\frac{5}{54}\neq0.
\end{equation}
From the Implicit Function Theorem  we deduce that there exist $\tilde{\eps}_0>0$, $0<\kappa_0<\ln(3/2)$ and a $C^1$-function $y_r\colon B_{H^2}\big(\rho(\cdot-r),\tilde{\eps}_0\big)\longrightarrow]-\kappa_0,\kappa_0[$ which is uniquely determined such that
$$
Y\big(y_r(v),v\big)=Y\big(0,\rho(\cdot-r)\big)=0,\qquad \forall v\in B_{H^2}\big(\rho(\cdot-r),\tilde{\eps}_0\big).
$$
In particular, there exists $C_0>0$ such that if $v\in B_{H^2}\big(\rho(\cdot-r),\beta\big)$, with $0<\beta\leq\tilde{\eps}_0$, then 
\begin{equation}\label{ycb}
\big\vert y_r(v(t,\cdot))\big\vert\leq C_0\beta.
\end{equation}
Note that by a translation symmetry argument $\tilde{\eps}_0$, $\kappa_0$, and $C_0$ are independent of $r\in\R$. Therefore, by uniqueness, we can define a $C^1$-mapping $\tilde{x}\colon \bigcup_{r\in\R}B_{H^2}\big(\rho(\cdot-r),\tilde{\eps}_0\big)\longrightarrow]-\kappa_0,\kappa_0[$ by setting
$$
\tilde{x}(v)=r+y_r(v)\qquad\forall v\in B_{H^2}\big(\rho(\cdot-r),\tilde{\eps}_0\big).
$$
Now, according to \eqref{assum3}, it holds $ \{\frac{1}{c}v(t,), [0,T]\} \subset \cup_{z\in\R} B_{H^2}(\rho (\cdot-z),\tilde{\varepsilon}_0)$  so that we can define  the function $ x(\cdot)$ on $\R$ by setting $ x(t)=\tilde{x}(v(t)) $. By construction $ x(\cdot) $ satisfies  \eqref{nanaa}-\eqref{orto}.
 Moreover, \eqref{nana} together with \eqref{ycb} ensure that for any $ c>0 $ and any $
 0<\varepsilon<c \tilde{\eps}_0 $, it holds 
 \begin{equation}\label{vb1}
 \|\frac{1}{c}u(t)-\varphi(\cdot-x(t))\|_{\H}  \le   (\frac{\varepsilon}{c}) + \sup_{|z|\le C_0 \frac{\varepsilon}{c}} \|\varphi-\varphi(\cdot-z))\|_{\H}  
  \lesssim   \frac{\varepsilon}{c}
 \end{equation}
which proves \eqref{64}.

  In view of \eqref{DP}, any solution $ u\in C(\R;H^1(\R)) $ of (D-P) satisfies $ u_t\in C(\R;L^2(\R)) $ and thus belongs to $C^1(\R;L^2(\R)) $. This ensures that $ v\in C^1(\R;H^2(\R))$ so that   the mapping $ t\mapsto x(t)=\tilde{x}(v(t)) $ is of class $ C^1$ on $ \R $.  
  
  Now, we notice that applying the operator $(4-\partial_x^2)^{-1} $ to the two members of \eqref{DP} and using that 
\begin{equation}\label{zq}
(4-\partial_x^2)^{-1} (1-\partial_x^2)^{-1}= \frac{1}{3} (1-\partial_x^2)^{-1}-  \frac{1}{3} (4-\partial_x^2)^{-1}\; , 
\end{equation}
we get that $ v$ satisfies
\begin{equation}\label{vt}
v_t=-\frac{1}{2} \partial_x (1-\partial_x^2)^{-1} u^2\; .
\end{equation} 
 On the other hand, 
  setting  $ R(t,\cdot)=c \rho (\cdot-x(t))$ and $ w=v-R$ and differentiating \eqref{orto} with respect to time we get 
  \begin{eqnarray}\label{vb}
   \int_{\R} w_t \rho'(\cdot-x(t))&=& \dot{x}(t) \int_{\R} w\,  \rho''(\cdot-x(t)) \nonumber \\
   & = & -\dot{x}(t) \int_{\R} \partial_x w \, \rho'(\cdot-x(t))\nonumber \\
   &=& (\dot{x}(t)-c)O(\|w\|_{H^1})+ c \, O(\|w\|_{H^1})\; .
  \end{eqnarray}
  Substituting $ v $ by $ w+R $ in \eqref{vt} and using that $ R $ satisfies
  $$
  \partial_t R +(\dot{x}-c) \partial_x R =-\frac{1}{2}\partial_x (1-\partial_x^2)^{-1}  \varphi_c^2(\cdot-x(t))  \; ,
  $$
  we infer that $ w$ satisfies 
  $$
  w_t -(\dot{x}-c) \partial_x R = -\frac{1}{2} \partial_x (1-\partial_x^2)^{-1} \Bigl( u^2-\varphi_c^2\Bigr)
  =-\frac{1}{2} \partial_x (1-\partial_x^2)^{-1} \Bigl( (u-\varphi_c)(u+\varphi_c)\Bigr) \; .
  $$
  Taking the $ L^2$-scalar product of this last equality with $\rho'(\cdot-x(t)) $ and using \eqref{vb} together with \eqref{assum3} and \eqref{64} we get, for all $ t\in [0,T] $, 
  $$
  \Bigl|(\dot{x}(t)-c) \Bigl(\int_{\R}\,  \partial_x R(t,\cdot) \,  \rho'(\cdot-x(t))+c\, O(\|w\|_{H^1})\Bigr) \Bigr|\le  O(\|w\|_{H^1})+O(\Vert u-\varphi_c(\cdot-x(t)\Vert_{\mathcal{H}}) \lesssim K c\, \tilde{\eps}_0\; .
  $$
  Therefore, by recalling \eqref{eded} and   possibly decreasing the value of  $ \tilde{\eps}_0>0 $ so that $ K\tilde{\eps}_0 \ll 1$,
   we obtain \eqref{nanaa2}. 
\end{proof}
\begin{proposition}\label{pro2}
There exists $\eps_0>0 $ such that for any  $u_0\in Y\cap H^\infty(\R) $ satisfying Hypothesis \ref{hyp}, if  the solution $u\in C(\R_{+}; H^\infty(\mathbb{R})\big)$ emanating from $u_0$ satisfies for some
 $ c>0  $, $T >0 $ and some function $ r \, : [0,T] \to \R $,
\begin{equation}\label{assum4}
\sup_{t\in[0,T]} \Vert u(t,\cdot)-\varphi_c(\cdot-r(t))\Vert_{\mathcal{H}}<   \eps_0(1\wedge c^2), 
\end{equation}
  then
 for all $t\in[0,T]$,
\begin{equation}\label{81}
\Vert y^{-}(t,\cdot)\Vert_{L^1(]r(t)-\frac{1}{16}ct,+\infty[)}\leq e^{-ct/8}\Vert y_0\Vert_{L^1(\R)}.
\end{equation}
where $y^- =\max(-y,0)$ and $x(\cdot)$ is the $C^1$-function 
constructed in Lemma \ref{modulation}.
\end{proposition}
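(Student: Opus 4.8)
The plan is to argue in Lagrangian coordinates, spending the sign condition \eqref{dodo} that Hypothesis \ref{hyp} forces on the solution. Since $u_0\in Y\cap H^\infty$ I may work directly with the smooth flow, so that the transport identities \eqref{13}--\eqref{13c} hold pointwise. First I would invoke Lemma \ref{modulation}: assumption \eqref{assum4} (with $\eps_0\le\tilde\eps_0$, so that $\eps_0(1\wedge c^2)$ fits the smallness required there) provides the $C^1$ curve $x(\cdot)$ with $|x(t)-r(t)|\le\kappa_0$ from \eqref{nanaa}, $|\dot x(t)-c|\le c/8$ from \eqref{nanaa2}, and the closeness \eqref{64}. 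Set $a(t):=r(t)-\frac1{16}ct$ and $\Phi(t):=\|y^-(t)\|_{L^1(]a(t),+\infty[)}$. Because the support property $\supp y^-(t)\subset\,]-\infty,x_0(t)]$ with $x_0(t)=q(t,x_0)$ is preserved (Proposition \ref{weakGWP}), and since $x_0\le r(0)$ (the positive momentum of the nearby peakon sits at its peak), $\Phi(t)$ only sees the characteristics issued from $\{\xi\le x_0\}$ whose image lies to the right of $a(t)$; in particular $\Phi(0)=0$.

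The heart of the matter is a lower bound on the Jacobian $q_x$ along the characteristics carrying $y^-$. For $\xi<x_0$, order preservation gives $q(s,\xi)<x_0(s)$ for every $s$, so \eqref{dodo} applied at time $s$ yields $u_x(s,q(s,\xi))\ge u(s,q(s,\xi))=\partial_s q(s,\xi)$ by \eqref{defq}. Integrating in $s$ and using \eqref{13} I would obtain
\[
q_x(t,\xi)=\exp\Big(\int_0^t u_x(s,q(s,\xi))\,ds\Big)\ge \exp\big(q(t,\xi)-\xi\big).
\]
This is exactly where the first-negative-then-positive structure is used, and it is the only genuinely nontrivial input.

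With this in hand I would pass to Lagrangian variables. Since $y_0(\xi)<0$ on $\{\xi<x_0\}$, \eqref{13b} gives $y^-(t,q(t,\xi))=y_0^-(\xi)/q_x^3(t,\xi)$, and the change of variables $x=q(t,\xi)$ turns the localized norm into
\[
\Phi(t)=\int_{\zeta(t)}^{x_0}\frac{y_0^-(\xi)}{q_x^2(t,\xi)}\,d\xi,\qquad \zeta(t):=q(t,\cdot)^{-1}(a(t)).
\]
On the range $\xi\in[\zeta(t),x_0]$ one has $q(t,\xi)\ge a(t)$ and $\xi\le x_0$, so the Jacobian bound gives $q_x^{-2}(t,\xi)\le e^{-2(a(t)-x_0)}$, whence $\Phi(t)\le e^{-2(a(t)-x_0)}\|y_0\|_{L^1}$. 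It then remains to check $a(t)-x_0\ge\frac1{16}ct$: from $\dot x\ge\frac78 c$ (via \eqref{nanaa2}) and $|x(t)-r(t)|\le\kappa_0$ one gets $a(t)\ge x(0)-\kappa_0+\frac{13}{16}ct$, and combined with $x(0)\ge x_0-\kappa_0$ this yields $a(t)-x_0\ge\frac{13}{16}ct-2\kappa_0\ge\frac1{16}ct$ once $t\ge 8\kappa_0/(3c)$, giving the claimed $e^{-ct/8}$ decay on that range.

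The main obstacle is twofold. Conceptually it is the Jacobian estimate $q_x\ge e^{q-\xi}$: everything downstream is a change of variables, but this is precisely the step where the sign hypothesis must be spent, and one must verify that $q(s,\xi)<x_0(s)$ throughout $[0,t]$ so that \eqref{dodo} applies at every intermediate time. Technically, the remaining difficulty is the initial layer $t\lesssim\kappa_0/c$, where the crude bound $e^{-2(a(t)-x_0)}\|y_0\|_{L^1}$ is weaker than the target: there I would instead use $\Phi(0)=0$ together with the fact that the Lagrangian window $[\zeta(t),x_0]$ opens only slowly (since $\zeta(0)=a(0)=r(0)\ge x_0$), so the captured mass stays below $e^{-ct/8}\|y_0\|_{L^1}$ on that interval. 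Finally, locating $x_0$ relative to the modulation center—justifying $x_0\le r(0)$ and hence $x(0)\ge x_0-\kappa_0$—rests on the $L^\infty$-closeness to $\varphi_c$ supplied by Lemma \ref{LemmaPP1} and the concentration of the peakon's momentum at its peak.
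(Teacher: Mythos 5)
Your key inequality $q_x(t,\xi)\ge e^{\,q(t,\xi)-\xi}$ for $\xi<x_0$ is correct (it follows from \eqref{13}, \eqref{defq} and \eqref{dodo} exactly as you say), and it is a genuinely different way of spending the sign hypothesis than the paper's proof, which instead splits into two cases according to the position of $x_0(t/2)$ relative to $x(t/2)$ and runs a two-stage Lagrangian estimate on $[0,t/2]$ and $[t/2,t]$ using the localized lower bound $u\ge c/4$ on a window $[x_0(\tau)-\ln 2,x_0(\tau)]$ near the peak. For $ct$ bounded away from $0$ your computation essentially closes. The genuine gap is the initial layer, and it is twofold. First, the input $x_0\le r(0)$ (hence $x(0)\ge x_0-\kappa_0$ and $\Phi(0)=0$) is false in general: $L^\infty$-closeness to $\varphi_c(\cdot-r(0))$ does not prevent the sign change of $y_0$, and even some negative mass, from sitting strictly to the right of $r(0)$ (take $u_0=\varphi_c(\cdot-r(0)-\eta)$ perturbed by a small dipole of momentum near $r(0)+\eta$). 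What can actually be proved, as in \eqref{as} of the paper, is only $x_0\le x(0)+\ln(4/3)$; your lower bound then reads $a(t)-x_0\ge\frac{13}{16}ct-\kappa_0-\ln(4/3)$ and the threshold beyond which it gives $e^{-ct/8}$ is $ct\gtrsim 1$, not $ct\gtrsim\kappa_0$. Second, on that layer your main estimate only yields $\Phi(t)\le e^{2(\kappa_0+\ln(4/3))}\Vert y_0\Vert_{L^1}\approx 4\Vert y_0\Vert_{L^1}$, which exceeds the target $e^{-ct/8}\Vert y_0\Vert_{L^1(\R)}$ (a quantity strictly below $\Vert y_0\Vert_{L^1}$ for every $t>0$, with no constant in front). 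The proposed substitute, that ``the Lagrangian window opens slowly'', is not quantified and cannot be used as stated: $r(\cdot)$ is an arbitrary, not necessarily continuous, function, so $a(t)=r(t)-\tfrac{c}{16}t$ may jump by $2\kappa_0$ and instantly capture an amount of negative mass comparable to $\int y_0^-$, which is not obviously below $e^{-ct/8}\Vert y_0\Vert_{L^1}$ for small positive $t$.

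The layer is repairable inside your framework, but it requires re-importing the paper's localization rather than a soft continuity remark. For $ct=O(1)$, any $\xi$ in the window satisfies $q(t,\xi)\ge a(t)\ge x(t)-O(1)$; since $\dot x\ge\frac{7}{8}c$ by \eqref{nanaa2} while $u(s,z)\le\varphi_c(z-x(s))+10^{-5}c$ by \eqref{tf1}, a characteristic that falls more than a fixed distance behind $x(s)$ recedes at rate $\ge\frac{3}{4}c$ and never returns, so such a $\xi$ must have satisfied $q(s,\xi)\ge x(s)-O(1)$ for all $s\in[0,t]$; there $u(s,q(s,\xi))\ge \mathrm{const}\cdot c$ with a constant exceeding $1/16$, whence $q_x(t,\xi)\ge e^{\mathrm{const}\cdot ct}$ and $q_x^{-2}\le e^{-ct/8}$ pointwise on the window. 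This is precisely the mechanism of Step 1 of the paper's Case 2. Without some such quantitative control of the characteristics on all of $[0,t]$ for small $ct$, the proof as written does not close.
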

\begin{proof}
  Let $ \tilde{\eps}_0 >0 $ and $ K\ge 1 $  be the universal constants that appears in the statement of Lemma \ref{modulation}. Assuming  \eqref{nana} with 
$$
 \eps <\min(c \tilde{\eps}_0, 10^{-20} (1\wedge c^{2})/K)\le \frac{10^{-20}\wedge \tilde{\eps}_0}{K}  (1\wedge c^{2})\; , 
 $$
  \eqref{64}, Lemma \ref{LemmaPP1}  ensure that
\begin{equation}\label{tf1}
\sup_{t\in[0,T]} \Vert u(t,\cdot)-\varphi_c(\cdot-x(t))\Vert_{L^\infty}\leq  10^{-5} c  \; .
\end{equation}
where $ x(\cdot) $ is the $ C^1$-function constructed in Lemma \ref{modulation}.
Therefore, setting
\begin{equation}\label{choice}
\eps_0:= \frac{10^{-20}\wedge \tilde{\eps}_0}{K}   \; ,
\end{equation}
\eqref{assum4} ensures that \eqref{tf1} holds.

Let $ t\in [0,T]$, we separate two possible cases  according to the distance between $x_0(t/2) $  and $ x(t/2) $, where  $ x_0(\cdot) $ is defined in Proposition \ref{weakGWP}.\\
\noindent
{\it Case 1:}
\begin{equation}\label{79}
x_0(t/2)<x(t/2)-\ln(3/2).
\end{equation}
In view of   \eqref{tf1}  and the monotony of $ \varphi $ on $ \R_-$, it holds 
\begin{equation}
u(\tau,x)\leq\varphi_c\big(-\ln(3/2)\big)+\frac{c}{16}=\frac{2}{3}c+\frac{1}{16}c\leq\frac{3}{4}c,
\quad \forall   x\le x(\tau)-\ln(3/2)  \;\text{with}\; \tau\in [0,T]\; .
\end{equation}
In particular   \eqref{79} and   \eqref{defq} lead to
\begin{equation}
\dot{x}_0(t/2)=u\big(t/2,x_0(t/2)\big)\leq\frac{3}{4}c.
\end{equation}
Therefore, since  \eqref{nanaa2}  forces $\dot{x}(t)\ge 7c/8$ on $[0,T]$, a classical continuity argument 
 ensures that $x_0(\cdot)<x(\cdot)-\ln(3/2) $ on  $[t/2,T] $ and thus $ \dot{x}_0(\cdot) \leq\frac{3}{4}c $ on $ [t/2,T] $. 
It follows from \eqref{nanaa} that
$$
r(t)-x_0(t)\ge  x(t)-x_0(t)-\ln(3/2) =\int_{t/2}^t\big[\dot{x}(\theta)-\dot{x}_0(\theta)\big]d\theta+x(t/2)-x_0(t/2)-\ln(3/2) \\
 \geq \frac{c}{16}t \; .
$$
This proves that $y^{-}(t,\cdot)=0$ on $]r(t)-\frac{1}{16}ct,+\infty[$ and thus that  \eqref{81} holds in this case.\\
\noindent
{\it Case 2:}
\begin{equation}\label{83}
x_0(t/2)\geq x(t/2)-\ln(3/2).
\end{equation}
Then we first claim that 
\begin{equation}
x_0(\tau)\geq x(\tau)-\ln(3/2)\qquad\forall\text{$\tau\in[0,t/2]$.}
\label{83'}
\end{equation}
Indeed, assuming the contrary, we would get  as above that $ x_0(\cdot)<x(\cdot)-\ln(3/2) $ on $[\tau,T] $ that would 
contradicts \eqref{83}. Second, we notice that \eqref{tf1} ensures that 
\begin{equation}\label{83''}
u\big(\tau,x(\tau)-\ln(3)\big) \ge \varphi_c(-\ln(3)) - \frac{c}{16}\ge \frac{c}{4}  , \quad \forall \tau\in [0,T] \;.
\end{equation}
Since \eqref{dodo} forces $ u_x(\tau)\ge u(\tau) $ on $]-\infty, x_0(\tau)]$ for any $ \tau \in \R_+$, \eqref{83'}-\eqref{83''} then ensure that 
$ u(\tau) $ is increasing on $[x_0(\tau)-\ln(2),x_0(\tau)] $ and 
\begin{equation}\label{81v}
u_x(\tau, x) \ge u(\tau,x) \ge\frac{c}{4}  , \quad \forall (\tau,x)\in [0,T]\times [x_0(\tau)-\ln 2, x_0(\tau)] \; .
\end{equation}
Now, in this case we divide the proof into two steps.\\
\noindent
{\it Step$\colon1$}. The aim of this step is to prove the following estimate on $ y(t/2)$ :
\begin{equation}\label{91}
\left\vert \int_{x_0(t/2)-\ln 2}^{x_0(t/2)}y(t/2,s)ds\right\vert\leq e^{-\frac{1}{4}ct}\Vert y_0\Vert_{L^1(\R)}.
\end{equation}
For $ \tau\in \R_+ $, we denote by $ q^{-1}(\tau,\cdot) $ the inverse mapping of $ q(\tau,\cdot) $. Then, the change of variables along the flow $ \theta=q^{-1}(t/2,s) $ leads to 
\begin{equation}\label{1001}
\int_{x_0(t/2)-\ln 2}^{x_0(t/2)}y(t/2,s)ds=\int_{q^{-1}(t/2,x_0(t/2)-\ln 2)}^{q^{-1}(t/2,x_0(t/2))}y\big(t/2,q(t/2,\theta)\big)q_x(t/2,\theta)\, d\theta.
\end{equation}
Since $ x_0(\tau)=q(\tau,x_0) $ it clearly holds  $x_0=q^{-1}(t/2,x_0(t/2)) $ and  \eqref{81v} together with \eqref{defq} force 
$$
\partial_t q\big(\tau,q^{-1}(t/2,x)\big) \le\dot{x}_0(\tau) , \quad \forall \hspace{0.5mm}(\tau,x)\in [0,t/2]\times [x_0(t/2)-\ln 2, x_0(t/2)] \; .
$$
This ensures that for all $\tau\in [0,t/2] $, 
\begin{equation}\label{1002}
 0<  x_0(\tau)-q(\tau,q^{-1}(t/2, x_0(t/2)-\ln 2))\le    x_0(t/2)-q(t/2,q^{-1}(t/2, x_0(t/2)-\ln 2))=\ln 2 
\end{equation}
In particular,  for any $ \theta \in [q^{-1}(t/2, x_0(t/2)-\ln 2), x_0] $ and any $\tau\in [0,t/2] $, it holds 
$q(\tau,\theta)\in [x_0(\tau)-\ln 2,x_0(\tau)] $ and \eqref{81v} yields
$$
u_x(\tau,q(\tau,\theta))\ge u(\tau,q(\tau,\theta))\ge c/4 \; .
$$
In view of \eqref{13} we thus deduce that 
$$
q_x(t/2,\theta)=\exp\Bigl(\int_0^{t/2} u_x(\tau, q(\tau,\theta))\, d\tau\Bigr) \ge \exp(\frac{c}{8} t) \; .
$$
Plugging this estimate in \eqref{1001},  using \eqref{13b}, \eqref{1002} and that $ y(\tau,\cdot) \le 0 $ on 
 $ ]-\infty,x_0(\tau)] $ for  $\tau\ge 0 $, we eventually get 
\begin{eqnarray*}
\int_{x_0(t/2)-\ln 2}^{x_0(t/2)}y(t/2,s)ds & \ge &
e^{-\frac{c}{4} t }  \displaystyle \int_{q^{-1}(t/2,x_0(t/2)-\ln 2)}^{q^{-1}(t/2,x_0(t/2))=x_0}y\big(t/2,q(t/2,\theta)\big)q_x^3(t/2,\theta)\, d\theta  \\
& \ge &e^{-\frac{c}{4} t }  \int_{x_0-\ln 2}^{x_0} y(0,\theta) \, d\theta 
\end{eqnarray*}
which proves \eqref{91}. 

{\it Step$\colon2$}. In this step,  we prove that 
\begin{equation}\label{94}
\left\vert \int_{x(t)-\ln (3/2)-\frac{c}{16} t }^{x_0(t)}y(t,s)\, ds\right\vert \le 
e^{c t/8} \left\vert \int_{x_0(t/2)-\ln 2}^{x_0(t/2)}y(t/2,s)\, ds\right\vert
\end{equation}
Clearly, \eqref{94} combined with \eqref{91}  and \eqref{nanaa} prove that  \eqref{81} also holds  in this case which completes the proof of the proposition. 

First, for any $t_{1}\geq0$ we define the function $q_{t_1}(\cdot,\cdot)$ on $\R_{+}\times\R$ as follows
\begin{equation}\label{defq1}
\left\{
\begin{array}{lcl}
\displaystyle \partial_tq_{t_1}(t,x)=u(t,q_{t_1}(t,x)), &\forall(t,x)\in\R_{+}\times\R,\vspace{1mm}\\
\displaystyle q_{t_1}(t_1,x)=x, & x\in\R.
\end{array}
\right.
\end{equation}
 The mapping $q_{t_1}(t,\cdot)$ is an increasing diffeomorphism of $\R$ and we  denote by
 $q^{-1}_{t_1}(t,\cdot)$ it inverse mapping. As in \eqref{13} we have
$$
\partial_xq_{t_1}(t,x)=exp\Big(\int_{t_1}^tu_x\big(s,q_{t_1}(s,x)\big)ds\Big)>0,\qquad\forall(t,x)\in\R_{+}\times\R,
$$
and
\begin{equation}\label{13aaa}
y\big(t,q_{t_1}(t,x)\big)(\partial_xq_{t_1})^3(t,x)=y(t_1,x),\qquad\forall(t,x)\in\R_+\times\R.
\end{equation}
In particular, \eqref{50b},  \eqref{dodo} together with \eqref{tf1} ensure that for any $ \tau\in [t/2, t] $ and any  $x\le x_0(t/2) $, 
\begin{equation}\label{13bbb}
\partial_x q_{t/2}(\tau,x)\ge exp\Big(-\int_{t/2}^t  2^{-5} c \,  ds\Big)\ge e^{-2^{-4} c t } \quad .
\end{equation}
  Using the change of variables $ \theta=q_{t/2}^{-1}(t,s) $ 
 we eventually get 
$$
  \int_{q_{t/2}(t,x_0(t/2)-\ln 2) }^{x_0(t)}y(t,s)\, ds=\int_{x_0(t/2)-\ln 2 }^{x_0(t/2)}y(t,q_{t/2}(t,\theta))
  \partial_x q_{t/2}(t,\theta)\, d\theta
$$
  and \eqref{13aaa}-\eqref{13bbb} lead to
  \begin{equation}\label{asas}
  \int_{q_{t/2}(t,x_0(t/2)-\ln 2) }^{x_0(t)}y(t,s)\, ds \ge 
   e^{c t/8} \int_{x_0(t/2)-\ln 2 }^{x_0(t/2)} 
  y(t/2,\theta)\, d\theta
  \end{equation}
 Now,  we notice  that \eqref{tf1}  forces 
\begin{equation}\label{as}
x_0(\tau)\le x(\tau) + \ln(4/3), \quad \forall \tau\in [0,T]\; .
\end{equation}
Indeed, otherwise since $ u(\tau,x(\tau))\ge c-\frac{c}{16} $ and $ u_x(\tau,\cdot) \ge  u(\tau,\cdot)$ on $]-\infty,x_0(\tau)] $
 this would imply that $ u(\tau, x(\tau)+\ln(4/3) )\ge \frac{15}{16} c $ that is not compatible with 
 $ \varphi_c(\ln(4/3) )=\frac{4}{3}c $  and \eqref{assum4}.
From \eqref{as} we deduce that for all $\tau\in [0,T]$,
\begin{equation}\label{as2}
x_0(\tau)-\ln(2) \le x(\tau)-\ln(3/2)
\end{equation}
and thus 
$$
u(\tau,  x_0(\tau)-\ln(2))\le \varphi_c (x_0(\tau)-x(\tau)-\ln(2))+\frac{c}{16} 
\le \varphi_c (-\ln(3/2))+\frac{c}{16} \le \frac{3c}{4}\; .
$$
Combining this last inequality at $ \tau=t/2 $  with  \eqref{defq1}, \eqref{nanaa2}  and a continuity argument we infer that 
$$
\dot{x}(\tau) -\partial_t q_{t/2}(\tau, x_0(t/2)-\ln(2))\ge \frac{c}{8} , \quad \forall \tau\in [t/2,T]\; ,
$$
which yields 
\begin{equation}\label{as3}
q_{t/2}(t,x_0(t/2)-\ln(2)) \le x(t)-\ln(3/2)-\frac{c}{16} t \; .
\end{equation}
Combining \eqref{asas} and \eqref{as3}, \eqref{94} follows.
\end{proof}


\begin{corollary}\label{coro1}
Under the same hypotheses as in Proposition \ref{pro2},  for all $t\in[0,T]$, it holds 
\begin{equation}\label{lo}
u(t,\cdot)-6v(t,\cdot) \le  e^{9-\frac{ct}{32}}\Vert y_0\Vert_{L^1(\R)} \quad \text{on} \; ]r(t)-8, +\infty[ \;, 
\end{equation}
where $v=(4-\partial_x)^{-1} u $. 
\end{corollary}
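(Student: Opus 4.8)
The plan is to convert the quantity $u-6v$ into a convolution against the momentum density $y$, and then to exploit simultaneously the exponential decay of the convolution kernel and the localized control of $y^-$ furnished by \eqref{81}. Since we work under the hypotheses of Proposition \ref{pro2}, the solution is smooth, so all the manipulations below are licit.

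First I would compute the kernel. Writing $u=(1-\partial_x^2)^{-1}y$ and invoking \eqref{zq} gives $v=(4-\partial_x^2)^{-1}(1-\partial_x^2)^{-1}y=\frac13 u-\frac13(4-\partial_x^2)^{-1}y$, whence $u-6v=-u+2(4-\partial_x^2)^{-1}y$. Recalling that $(1-\partial_x^2)^{-1}$ and $(4-\partial_x^2)^{-1}$ have respective kernels $\frac12 e^{-|\cdot|}$ and $\frac14 e^{-2|\cdot|}$ (cf. \eqref{1.88}), this yields the representation
$$
u(t,x)-6v(t,x)=\int_\R \mathcal K(x-x')\, y(t,x')\, dx',\qquad \mathcal K(s)=\tfrac12\bigl(e^{-2|s|}-e^{-|s|}\bigr).
$$
The key structural fact is that $\mathcal K\le 0$ on $\R$, with $|\mathcal K(s)|=\frac12(e^{-|s|}-e^{-2|s|})\le \frac12 e^{-|s|}$ and $|\mathcal K|\le \frac18$. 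Hence the contribution of $y^+$ is non-positive and
$$
u(t,x)-6v(t,x)\le \int_\R |\mathcal K(x-x')|\, y^-(t,x')\, dx'.
$$

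Next I would estimate this last integral for $x\in]r(t)-8,+\infty[$ by splitting at the point $r(t)-\frac{1}{16}ct$. On the near region $x'>r(t)-\frac{1}{16}ct$ I use $|\mathcal K|\le\frac18$ together with the localized bound \eqref{81}, producing a contribution $\le\frac18 e^{-ct/8}\|y_0\|_{L^1}\le\frac18 e^{-ct/32}\|y_0\|_{L^1}$. On the far region $x'\le r(t)-\frac{1}{16}ct$, the inequality $x>r(t)-8$ forces $|x-x'|\ge\frac{1}{16}ct-8$, so $|\mathcal K(x-x')|\le\frac12 e^{8-ct/16}$; combined with the crude bound $\|y^-(t)\|_{L^1}\le\|y(t)\|_{L^1}\le 3e^{ct/32}\|y_0\|_{L^1}$ (see the last paragraph) this gives a contribution $\le\frac32 e^{8-ct/32}\|y_0\|_{L^1}$. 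Adding the two pieces and checking that $\frac32 e^8+\frac18\le e^9$ yields \eqref{lo} whenever $\frac{1}{16}ct\ge 8$. The remaining range $\frac{1}{16}ct<8$ is handled directly from $|\mathcal K|\le\frac18$ and the same growth bound, since then $\frac38 e^{ct/16}\le\frac38 e^8\le e^9$.

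The one genuine point to verify is the global growth estimate on $\|y(t)\|_{L^1}$. It comes from \eqref{l1y} applied with $\alpha=\eps_0(1\wedge c^2)$, which is admissible since \eqref{assum4} implies \eqref{assum1} with this $\alpha<1$; thus $\|y(t)\|_{L^1}\le(1+2e^{8\alpha^{2/3}(2+c)t})\|y_0\|_{L^1}$. The main obstacle is therefore to ensure that this growth rate does not absorb the decay, i.e. that $8\alpha^{2/3}(2+c)\le c/32$. I would check this by distinguishing $c\le 1$ and $c>1$: in both cases $\alpha^{2/3}\le\eps_0^{2/3}(1\wedge c^{4/3})$ and $(2+c)\le 3\,(1\vee c)$, so that $8\alpha^{2/3}(2+c)\le 24\,\eps_0^{2/3}c$, which is $\le c/32$ thanks to the smallness of $\eps_0$ fixed in \eqref{choice}. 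This gives $\|y(t)\|_{L^1}\le 3e^{ct/32}\|y_0\|_{L^1}$, as used above, and completes the argument.
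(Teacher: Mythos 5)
Your proof is correct and follows essentially the same route as the paper: both express $u-6v$ as the convolution of the signed kernel $\tfrac12(e^{-2|\cdot|}-e^{-|\cdot|})$ with $y$, discard the $y^+$ contribution, split the remaining integral at $r(t)-\tfrac{c}{16}t$, and combine the localized decay \eqref{81} with the global growth bound \eqref{50} (whose rate is absorbed thanks to the smallness of $\eps_0$ in \eqref{choice}). The only differences are cosmetic (your $|\mathcal K|\le 1/8$ versus the paper's $\tfrac12 e^{-|x-z|}\le\tfrac12$, and an explicit case split where the paper writes $e^{0\wedge(8-ct/16)}$).
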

\begin{proof} By \eqref{zq}, it hods 
\begin{align}
6v-u &=  (1-\partial_x^2)^{-1}y - 2 (4-\partial_x^2)^{-1}y  = \frac{1}{2} e^{-|\cdot|} \ast y - \frac{1}{2} e^{-2|\cdot|} \ast y 
\nonumber\\
&= \frac{1}{2} (e^{-|\cdot|}- e^{-2|\cdot|})\ast y \nonumber\\
& \ge -\frac{1}{2} (e^{-|\cdot|}- e^{-2|\cdot|})\ast y^-\ge -\frac{1}{2} e^{-|\cdot|} \ast y^-. \label{ql}
\end{align}
Therefore, for $ x\ge  r(t)-8 $, \eqref{50}, \eqref{81} and \eqref{choice} lead to 
\begin{eqnarray*}
6v(x)-u(x) & \ge &  - \frac{1}{2}\int_{-\infty}^{r(t)-\frac{c}{16}t} e^{-|x-z|} y^-(z)\, dz- \frac{1}{2}
\int_{r(t)-\frac{c}{16}t}^{+\infty} e^{-|x-z|} y^-(z)\, dz\\
& \ge & -\frac{1}{2} e^{0\wedge (8-\frac{c}{16}t)}  (1+2e^{2^{-5} c t})\|y_0\|_{L^1}  -\frac{1}{2} e^{-ct/8}\Vert y_0\Vert_{L^1(\R)}
\\
 & \ge & -e^{9-\frac{ct}{32}}\Vert y_0\Vert_{L^1(\R)} \; .
\end{eqnarray*}
\end{proof}
\section{Proof of Theorem \ref{stabpeakon}}\label{sect5}
Before starting the proof, we need the two following lemmas that will help us to rewrite the problem in a slightly different way. 
The next lemma ensures that the distance in $ \H $ to the translations of $\varphi_c$ is minimized for any  point of maximum of $ v=(4-\partial_x^2)^{-1} u$. 
\begin{lemma}[Quadratic Identity \cite{LL}]\label{Lemma P2} For any $u\in L^{2}(\mathbb{R})$ and $\xi\in\mathbb{R}$, it holds
\begin{equation}
E(u)-E(\varphi_{c})=\|u-\varphi_{c}(\cdot-\xi)\|^{2}_{\mathcal{H}}+4c\left(v(\xi)-\frac{c}{6}\right),
\label{P8}
\end{equation}
where $v=(4-\partial^{2}_{x})^{-1}u$ and $\xi $ is any point where $ v$ reaches its maximum.
\end{lemma}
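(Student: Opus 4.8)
The plan is to prove the quadratic identity \eqref{P8} by a direct expansion of $E(u)-E(\varphi_c)$ around the translated peakon profile, exploiting the fact that at a maximum point $\xi$ of $v$ the first-order (linear) term in the expansion reduces to a clean expression involving $v(\xi)$ alone.

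First I would recall from \eqref{EE} that $E(u)=\langle y,v\rangle=\int_{\R}(4v^2+5v_x^2+v_{xx}^2)$, a quadratic form in $v=(4-\partial_x^2)^{-1}u$, and that the associated bilinear form is
\[
B(v,w)=\int_{\R}\bigl(4vw+5v_xw_x+v_{xx}w_{xx}\bigr),
\]
so that $E(u)=B(v,v)$. Writing $w=v-\rho_c(\cdot-\xi)$, where $\rho_c=(4-\partial_x^2)^{-1}\varphi_c$ is the smooth-peakon profile of \eqref{1.88}, I would expand
\[
E(u)=B(v,v)=B(w,w)+2B\bigl(w,\rho_c(\cdot-\xi)\bigr)+B\bigl(\rho_c,\rho_c\bigr).
\]
Here $B(w,w)=\|u-\varphi_c(\cdot-\xi)\|_{\H}^2$ by the very definition \eqref{1.5} of the $\H$-norm together with the identity $\|v\|_{H^2}^2=\|(4-\partial_x^2)v\|_{\H}^2$ stated in the proof of Lemma \ref{modulation}, and $B(\rho_c,\rho_c)=E(\varphi_c)$. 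Thus the whole content of the lemma is to show that the cross term satisfies $2B\bigl(w,\rho_c(\cdot-\xi)\bigr)=4c\bigl(v(\xi)-\tfrac{c}{6}\bigr)$.

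The key step is to integrate the cross term by parts so as to replace the operator $4-5\partial_x^2+\partial_x^4=(4-\partial_x^2)(1-\partial_x^2)$ acting on $\rho_c$. Since $\varphi_c=(4-\partial_x^2)\rho_c$ and $\varphi_c=c\,e^{-|\cdot|}$ satisfies $(1-\partial_x^2)\varphi_c=2c\,\delta_0$ in the distributional sense, one finds $(4-5\partial_x^2+\partial_x^4)\rho_c(\cdot-\xi)=(1-\partial_x^2)\varphi_c(\cdot-\xi)=2c\,\delta_{\xi}$. Therefore
\[
B\bigl(w,\rho_c(\cdot-\xi)\bigr)=\int_{\R}w\,(4-5\partial_x^2+\partial_x^4)\rho_c(\cdot-\xi)=2c\,w(\xi)=2c\bigl(v(\xi)-\rho_c(0)\bigr),
\]
and since $\rho_c(0)=\tfrac{c}{3}-\tfrac{c}{6}=\tfrac{c}{6}$ by \eqref{1.88}, this gives exactly $4c\bigl(v(\xi)-\tfrac{c}{6}\bigr)$, completing the identity. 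At this point $\xi$ is still an arbitrary real number; the hypothesis that $\xi$ is a maximum point of $v$ is used only to guarantee that $v(\xi)=\max v$ is the natural normalization making $E(u)-E(\varphi_c)\ge\|u-\varphi_c(\cdot-\xi)\|_{\H}^2$ whenever $v(\xi)\ge\tfrac{c}{6}$, i.e.\ to extract stability information rather than to prove the algebraic identity itself.

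The main obstacle I anticipate is justifying the distributional integration by parts rigorously: $\rho_c$ is only $C^1$ (its second derivative jumps at the origin because $\varphi_c$ has a corner there), so the quantity $(4-5\partial_x^2+\partial_x^4)\rho_c$ must be interpreted as the Dirac mass $2c\,\delta_\xi$ coming from the jump in $\varphi_{c,x}$, and one must check that the boundary terms from the integration by parts of the $\int v_{xx}w_{xx}$ and $\int v_xw_x$ pieces either cancel or produce precisely this delta contribution. The cleanest way to handle this is to regularize, split $\R$ into $]-\infty,\xi]$ and $[\xi,+\infty[$, perform the integrations by parts on each half-line where $\rho_c$ is smooth, and collect the jump of $\partial_x\varphi_c$ across $\xi$; since $u\in L^2$ only guarantees $v\in H^2\hookrightarrow C^1$, all the boundary evaluations of $w$ and $w_x$ at $\pm\infty$ vanish and those at $\xi$ combine to give the single term $2c\,w(\xi)$.
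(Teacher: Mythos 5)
Your proposal is correct and follows essentially the same route as the paper: the identical computation appears in the paper's proof of the multipeakon generalization (Lemma \ref{lemma20}), where one expands the quadratic form, uses $(1-\partial_x^2)\varphi_c(\cdot-\xi)=2c\,\delta_\xi$ to evaluate the cross term as $4c\,v(\xi)$, and concludes with $E(\varphi_c)=c^2/3$, i.e.\ $\rho_c(0)=c/6$. Your only slight inaccuracy is that $\rho_c$ is in fact $C^2$ (the Dirac mass comes from the jump of $\varphi_{c,x}$, i.e.\ of $\partial_x^3\rho_c$), and you are right that the identity holds for every $\xi\in\R$, the maximality of $v(\xi)$ being used only downstream.
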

We will also need the following lemma that is implicitly contained in \cite{AK}.
\begin{lemma}\label{LemmaPP}
Let $ u\in L^\infty(\R) \cap L^2(\R) $ such that 
\begin{equation}\label{hyg}
 \|u-\varphi_c(\cdot-r) \|_{L^\infty}\le 10^{-5}\; c 
\end{equation}
for some $ c>0 $ and some $ r\in \R $. Then $ v=(4-\partial_x^2)^{-1} u$ has got a unique point of maximum $ \xi $ on $\R $ and 
\begin{equation}\label{hyg2}
\|u-\varphi_c(\cdot-\xi) \|_{\H}\le \|u-\varphi_c(\cdot-r) \|_{\H}\; .
\end{equation}
Finally, $\xi\in \Theta_r= [r-6.7,r+6.7]$,  is the only critical point of $ v$ in $\Theta_r $ and 
\begin{equation}\label{hyg3}
 \sup_{x\not\in \Theta_r} \big(|u(x)|, |v(x)|,|v_x(x)|\big) \le \frac{c}{100} \; .
\end{equation}
\end{lemma}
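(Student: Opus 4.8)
The plan is to exploit the explicit structure of the smooth peakon profile $\rho$ together with the hypothesis \eqref{hyg}, which says $u$ is uniformly $L^\infty$-close to $\varphi_c(\cdot-r)$. Since $v=(4-\partial_x^2)^{-1}u$ and $\rho=(4-\partial_x^2)^{-1}\varphi$, convolving \eqref{hyg} with the smoothing kernel $\frac14 e^{-2|\cdot|}$ (whose $L^1$-norm is $1/4$) immediately yields the pointwise bounds
$$
\|v-c\rho(\cdot-r)\|_{L^\infty}\le \tfrac14 \cdot 10^{-5} c, \qquad \|v_x-c\rho'(\cdot-r)\|_{L^\infty}\le \tfrac14 \cdot 10^{-5} c,
$$
the second coming from differentiating the convolution (the derivative falls on the kernel, which still has finite $L^1$-norm). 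By translation invariance I may take $r=0$. Recall from \eqref{1.88} that $\rho(x)=\frac13 e^{-|x|}-\frac16 e^{-2|x|}$, which is even, strictly increasing on $\R_-$ and strictly decreasing on $\R_+$, with a single maximum $\rho(0)=1/6$ and derivative vanishing only at $0$.

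\medskip
\noindent
\textbf{Localizing the maximum.} First I would show $v$ has a unique maximum, and that it lies in $\Theta_r$. Away from the origin, $c\rho'(\cdot)$ is bounded away from $0$: since $\rho'$ is explicitly computable and $|\rho'(x)|$ decays like $e^{-|x|}$, on the complement of $[-6.7,6.7]$ one has $|c\rho'(x)|>\tfrac14\cdot 10^{-5}c$, so the perturbation cannot flip its sign and $v_x$ keeps the sign of $\rho'$ there — hence $v$ is strictly increasing on $]-\infty,-6.7[$ and strictly decreasing on $]6.7,+\infty[$. This rules out any critical point outside $\Theta_r$ and forces every maximum into the compact interval. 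The sup bounds in \eqref{hyg3} then follow from the decay of $\rho,\rho',\varphi$ together with the perturbation estimates above, after checking numerically that at $|x|=6.7$ the profile values are below $c/100$ minus the perturbation. For \emph{uniqueness} inside $\Theta_r$ I would argue that $v''$ is strictly negative near $0$ — using $v''=c\rho''+O(10^{-5}c)$ and $\rho''(0)<0$ — so that $v$ is strictly concave on a neighborhood of its critical set, which combined with $v_x$ having a single sign change yields exactly one critical point $\xi$, necessarily the maximum.

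\medskip
\noindent
\textbf{The inequality \eqref{hyg2}.} This is the step I expect to be the main obstacle, and it is where Lemma~\ref{Lemma P2} enters. Apply the Quadratic Identity \eqref{P8} at the point of maximum $\xi$ of $v$ and also at the nearby point $r$: since $\xi$ maximizes $v$ we have $v(\xi)\ge v(r)$, and \eqref{P8} gives
$$
\|u-\varphi_c(\cdot-\xi)\|_{\H}^2 = E(u)-E(\varphi_c)-4c\big(v(\xi)-\tfrac{c}{6}\big).
$$
Because $\xi$ realizes the maximum, $4c(v(\xi)-c/6)$ is the \emph{largest} such correction, so subtracting the corresponding identity written with $r$ in place of $\xi$ — for which the left side is $\|u-\varphi_c(\cdot-r)\|_{\H}^2$ but the correction term uses $v(r)\le v(\xi)$ — yields
$$
\|u-\varphi_c(\cdot-\xi)\|_{\H}^2-\|u-\varphi_c(\cdot-r)\|_{\H}^2 = -4c\big(v(\xi)-v(r)\big)\le 0,
$$
which is exactly \eqref{hyg2}. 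Here I am using that \eqref{P8} holds with $\xi$ \emph{any} maximizer of $v$, so both instances of the identity share the same $E(u)$ and $E(\varphi_c)$ and the only difference is the last term. This clean cancellation is the heart of the argument, and the delicate part is simply ensuring the hypotheses of Lemma~\ref{Lemma P2} (namely that $\xi$ be a genuine maximizer) have been secured by the localization step, which is why I carry out the uniqueness-of-maximum analysis first.
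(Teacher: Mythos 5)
Your treatment of \eqref{hyg2} — applying the quadratic identity \eqref{P8} at $\xi$ and at $r$ and subtracting, so that only the term $-4c\big(v(\xi)-v(r)\big)\le 0$ survives — is exactly the paper's argument (the identity is purely algebraic and valid for every translation point, so using it at the non-maximizing point $r$ is legitimate). Your convolution estimates and the concavity-plus-sign-change argument for uniqueness of the critical point inside $\Theta_r$ also match the paper, which quantifies them via \eqref{zs1}: $\rho''\le\frac{\sqrt2-2}{6}$ on ${\mathcal V}_0$ and $|\rho'|\ge 10^{-4}$ on $\Theta_0\setminus{\mathcal V}_0$. One small numerical slip: the derivative of the kernel $\frac14 e^{-2|\cdot|}$ has $L^1$-norm $\frac12$, not $\frac14$, so the bound on $v_x-c\rho'(\cdot-r)$ is $\frac12\cdot 10^{-5}c$ (cf.\ \eqref{jd}); this is harmless.

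The genuine problem is your localization of the global maximum. You assert that $|c\rho'(x)|>\frac14\cdot 10^{-5}c$ on the complement of $[-6.7,6.7]$, hence that the perturbation cannot flip the sign of $v_x$ there and $v$ is strictly monotone on each unbounded component. This is false: $|\rho'(x)|=\frac13 e^{-|x|}\big(1-e^{-|x|}\big)\to 0$ as $|x|\to\infty$ and drops below $10^{-5}$ once $|x|\gtrsim 12$, after which the perturbation of $v_x$ (controlled only by $\tfrac12\cdot10^{-5}c$ in $L^\infty$) can dominate $c\rho'$. So $v$ may perfectly well have critical points, even local maxima, far from $r$ — note the lemma itself only claims uniqueness of the critical point \emph{in} $\Theta_r$, not on all of $\R$. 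The correct way to see that the \emph{global} maximum nevertheless lies in $\Theta_r$, and is therefore the unique critical point you found there, is the value comparison the paper uses via \eqref{zs2}: outside $\Theta_r$ one has $|v|\le 5\times 10^{-4}c+\tfrac14\cdot10^{-5}c<c/100$, whereas $v(\xi)\ge v(r)\ge c\rho(0)-\tfrac14\cdot 10^{-5}c$ is close to $c/6$. You do record the sup bounds \eqref{hyg3}, so the repair is immediate, but as written your deduction of the unique global maximizer rests on the false monotonicity claim rather than on this comparison.
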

\begin{proof}
Let us first recall that  $v-\rho_c= \frac{1}{4} e^{-2|\cdot|}\ast (u-\varphi_c) $ so that  Young convolution inequalities yield
\begin{equation}\label{jd}
\|v-\rho_c\|_{L^\infty}\le \|\frac{1}{4} e^{-2|\cdot|} \|_{L^1} \|u-\varphi_c\|_{L^\infty} \le \frac{1}{4}  \|u-\varphi_c\|_{L^\infty} 
\quad\text{and}\quad \|(v-\rho_c)'\|_{L^\infty}\le \frac{1}{2}  \|u-\varphi_c\|_{L^\infty} \; .
\end{equation}
Moreover,  $(v-\rho_c)''=4(v-\rho_c)-(u-\varphi_c) $ leads to
$$
\|(v-\rho_c)''\|_{L^\infty}\le 2 \|u-\varphi_c\|_{L^\infty} \; .
$$
Now, the crucial observations in \cite{AK1} are that 
\begin{equation}
\rho''\le \frac{\sqrt{2}-2}{6} \; \text{on} \; {\mathcal V}_0, \quad \rho'(x)=-\rho'(-x)\ge
 10^{-4}, \;\forall x\in \Theta_0/{\mathcal V}_0 \;, \label{zs1}
 \end{equation}
where, $\forall r>0 $,  $ {\mathcal V}_r=[r-\ln\sqrt{2},r+\ln\sqrt{2}] $. 
Therefore, \eqref{hyg} together with \eqref{zs1} ensure that $ v' $ is strictly decreasing on  $ {\mathcal V}_r $ and that 
 $ v'>0 $ on $[r-6.7,r-\ln\sqrt{2}]$ and $ v'<0 $ on $ [r+\ln\sqrt{2},r+6.7] $. This proves that $ v $ has got a unique critical point  $ \xi $ on $ \Theta_r $ that is a local maximum and that $\xi\in {\mathcal V}_r\subset \Theta_r$. Moreover $\rho(0)=1/6 $ together with the direct estimates 
\begin{equation}
  \rho\vee |\rho'|\le 5\times 10^{-4} \;\text{on} \; \R/\Theta_0\quad \text{and}\quad
  \varphi\vee |\varphi'|\le 5\times 10^{-3} \;\text{on} \; \R/\Theta_0\; ,
  \label{zs2}
  \end{equation}
  ensure that this is actually the unique point of maximum of $ v$ on $ \R $. This proves the first part of \eqref{hyg3} whereas the second part follows again from \eqref{zs2}. Finally, \eqref{hyg2} follows directly from Lemma \ref{Lemma P2} together with the fact $ v(\xi) $ is the maximum of $ v$ on $ \R $. 
\end{proof}
Now, let us recall that, by \eqref{app}, we can approximate  any $ u_0\in Y $ satisfying Hypothesis \ref{hyp}  by a sequence $ \{u_{0,n}\}\subset Y \cap H^\infty(\R) $ satisfying Hypothesis \ref{hyp}  such that 
$$
u_{0,n} \to u_0 \;  \text{in} \;  H^1(\R) \cap W^{1,1}(\R) \quad  \text{and}\quad   \|y_n\|_{L^1} \le \|y\|_{\M}\; , \; 
\forall n\in\N.
$$
Therefore the continuity with respect to initial data in Proposition  \ref{weakGWP} ensures that  to prove Theorem  \ref{stabpeakon} we can reduce ourself to  initial data $ u_0\in Y\cap H^\infty $. 

Let $ \eps_0 $ be the universal constant defined in \eqref{choice} and let us fix
\begin{equation}\label{km}
 0<\eps<\eps_0 \frac{1\wedge c^2}{(2+c)^3} \; .
\end{equation}
Let us also fix $ A>0 $. 
 From the continuity with respect to initial data \eqref{cont2} at 
 $ \varphi_c $, the fact that $t\mapsto \varphi_c(\cdot-ct)$ is an exact solution   and the translation symmetry of the (D-P) equation, there exists 
 \begin{equation}\label{defdelta}
  0<\delta=\delta
(A,\varepsilon,c)\le \eps^4
\end{equation}
 such that for any $ u_0\in Y $ satisfying Hypothesis \ref{hyp} and \eqref{P1}-\eqref{P2}
 with $ A $ and $\delta $, it holds 
\begin{equation}\label{kj}
\|u(t)-\varphi_c(x-ct) \|_{\H} \le 2 (2+c)\;\varepsilon , \quad \forall t\in [0, T_{\eps}], \text{with} \quad 
T_\eps=\max\Bigl( 0,\frac{32}{c} \big(9+\ln (A/\eps^2)\big)\Bigr)\; ,
\end{equation}
where $u\in C(\R_+; H^1(\R)) $ is the solution of the (D-P) equation emanating from $ u_0$. So let $ u_0\in Y\cap H^\infty(\R) $ that satisfies Hypothesis \ref{hyp}  and \eqref{P1}-\eqref{P2}
 with $ A $ and $\delta $. \eqref{kj} together with the definition \eqref{choice}
 of $\eps_0 $  and   Lemma \ref{LemmaPP1} then ensure that
 \begin{equation}\label{kjjj}
\|u(t)-\varphi_c(x- ct) \|_{L^\infty} <10^{-5} c  , \quad \forall t\in [0, T_{\eps}], 
\end{equation}
 and Lemma \ref{LemmaPP}   then ensures that
\begin{equation}\label{kjj4}
\|u(t)-\varphi_c(x-\xi(t)) \|_{\H} \le  2 (2+c) \;\varepsilon \; , \quad \forall t\in [0, T_{\eps}], 
\end{equation}
 where $ \xi(t) $ is  the only  point where $ v(t)=(4-\partial_x^2)^{-1} u(t)$ reaches its maximum.

By a continuity argument it remains to prove that  for any $ T\ge T_\eps $, if 
\begin{equation}\label{kkk}
\inf_{r\in\R} \|u(t)-\varphi_c(x-r) \|_{\H} \le  3  (2+c) \;\varepsilon  \quad\text{on} \quad [0,T] 
\end{equation}
then  $   v(T)=(4-\partial_x^2)^{-1} u(T)$ reaches its maximum on $ \R $ at a unique point $ \xi(T) $ 
and 
\begin{equation}\label{kkkk}
\|u(T)-\varphi_c(x-\xi(T)) \|_{\H} \le  2 (2+c) \;\varepsilon \; .
\end{equation}
At this stage it is worth noticing that, as above, \eqref{kkk} together with the definition \eqref{choice}
 of $\eps_0 $  and   Lemma \ref{LemmaPP1} ensure that
 $$
 \inf_{r\in\R} \|u(t)-\varphi_c(x-r) \|_{L^\infty} \le 10^{-5} c   , \quad \forall t\in [0, T] \; .
 $$
 Therefore applying Lemma \ref{LemmaPP}  and again   Lemma \ref{LemmaPP1} we obtain that 
\begin{equation}\label{kjj}
\|u(t)-\varphi_c(x-\xi(t)) \|_{\H} \le  3 (2+c) \;\varepsilon \; 
\quad \text{and} \quad  \|u(t)-\varphi_c(x-\xi(t)) \|_{L^\infty} \le 10^{-5} c  ,\quad \forall t\in [0, T], 
\end{equation}
 where $ \xi(t) $ is  the only  point where $ v(t)=(4-\partial_x^2)^{-1} u(t)$ reaches its maximum. Moreover, \eqref{kkk}   together with \eqref{km}, \eqref{choice},  Corollary \ref{coro1} and the definition of $T_\eps $ in \eqref{kj} then ensure that 
\begin{equation}\label{kj2}
u(t,\cdot)-6v(t,\cdot) \le  \eps^2 \quad \text{on} \quad \Theta_{\xi(t)} , \quad \forall t\in [0,T] \; .
\end{equation}
To prove \eqref{kkkk}, we  follow closely the proof in \cite{AK}, keeping \eqref{kj2} in hands. 
The idea comes back to \cite{CS}  and consists in constructing two functions $ g$ and $ h$ that permits to link in a good way 
 $ E(u)$, $F(u) $ and the maximum $M $ of $ v=(4-\partial^{2}_{x})^{-1}u $. This was first implement in \cite{LL} for the (DP)-equation under the additional hypothesis that the   momentum density of the initial data is non negative.

\begin{lemma}[ See \cite{LL}]\label{Lemma P4}
Let $u\in L^{2}(\mathbb{R})$ and $v=(4-\partial^{2}_{x})^{-1}u\in H^{2}(\mathbb{R})$. Denote by $M=\max_{\R}v(\cdot)=v(\xi)$ and define the function $g$ by
\begin{equation}
  g(x)=\left\{
    \begin{aligned}
     &2v(x)+v_{xx}(x)-3v_{x}(x)=u(x)-6v_x(x)+12v(x),\qquad \forall x\le \xi,\\
     &2v(x)+v_{xx}(x)+3v_{x}(x)=u(x)+6v_x(x)+12v(x),\qquad \forall x\ge \xi.\\
    \end{aligned}
  \right.
  \label{GG1}
\end{equation}
Then it holds
\begin{equation}
\int_{\mathbb{R}}g^{2}(x)dx=E(u)-12M^{2},
\label{GG2}
\end{equation}
and
\begin{equation}\label{improvment}
\int_{\mathbb{R}}g^{2}(x)dx=\big\Vert u-\varphi_c(\cdot-\xi)\big\Vert_{\mathcal{H}}^2-12\left(\frac{c}{6}-M\right)^2
\le \big\Vert u-\varphi_c(\cdot-\xi)\big\Vert_{\mathcal{H}}^2\; .
\end{equation}

\end{lemma}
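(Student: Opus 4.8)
The plan is to prove the two displayed identities \eqref{GG2} and \eqref{improvment} by direct computation, exploiting the fact that $g$ is defined piecewise with a breakpoint exactly at the maximum point $\xi$ of $v$. First I would establish \eqref{GG2}. The key observation is that the $H^2$-norm appearing in $E(u)=\int 4v^2+5v_x^2+v_{xx}^2$ (recall the identity $E(u)=\|v\|_{H^2}^2$ stated in the proof of Lemma \ref{modulation}) can be matched against $\int g^2$ by splitting the real line at $\xi$. On $]-\infty,\xi]$ one has $g=2v+v_{xx}-3v_x$, so expanding $g^2$ gives $4v^2+v_{xx}^2+9v_x^2+4vv_{xx}-12vv_x-6v_xv_{xx}$. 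The plan is to integrate this over $]-\infty,\xi]$ and use integration by parts on the cross terms: the term $\int 4vv_{xx}$ contributes $-\int 4v_x^2$ plus a boundary term $4v(\xi)v_x(\xi)$, the term $-\int 12vv_x=-6\int(v^2)_x$ contributes a boundary term $-6v(\xi)^2$, and $-\int 6v_xv_{xx}=-3\int(v_x^2)_x$ contributes a boundary term $-3v_x(\xi)^2$.

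The decisive simplification is that $\xi$ is the point of maximum of $v$, so $v_x(\xi)=0$. This kills the boundary terms involving $v_x(\xi)$ and leaves only $-6v(\xi)^2=-6M^2$ from the half-line $]-\infty,\xi]$. After the integrations by parts, the surviving integrand on $]-\infty,\xi]$ becomes $4v^2+5v_x^2+v_{xx}^2$, matching exactly the $E$-density. Performing the symmetric computation on $[\xi,+\infty[$ with $g=2v+v_{xx}+3v_x$, I expect the cross terms to again produce $4v^2+5v_x^2+v_{xx}^2$ as the integrand together with a boundary contribution of another $-6M^2$ (the sign flip on the $3v_x$ term compensates the reversed orientation of the boundary). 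Summing the two halves yields $\int_{\R}g^2=\int_{\R}(4v^2+5v_x^2+v_{xx}^2)-12M^2=E(u)-12M^2$, which is \eqref{GG2}.

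For \eqref{improvment}, I would simply combine \eqref{GG2} with the Quadratic Identity of Lemma \ref{Lemma P2}. Writing $E(\varphi_c)=12(c/6)^2\cdot\text{(const)}$—more precisely, applying \eqref{P8} with the peakon itself and noting $E(\varphi_c)=12(c/6)^2$ since $\rho_c(0)=c/6$ is the maximum of $\rho_c$—one gets $E(u)=\|u-\varphi_c(\cdot-\xi)\|_{\H}^2+4c\,M-\tfrac{2c^2}{3}+E(\varphi_c)$. Substituting into \eqref{GG2} and completing the square in $M$ should reorganize the constant and linear-in-$M$ terms into the single quadratic $-12(c/6-M)^2$, giving $\int g^2=\|u-\varphi_c(\cdot-\xi)\|_{\H}^2-12(c/6-M)^2$. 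The final inequality $\int g^2\le\|u-\varphi_c(\cdot-\xi)\|_{\H}^2$ is then immediate since the subtracted term is a nonnegative square.

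The main obstacle I anticipate is purely bookkeeping rather than conceptual: one must track the boundary terms at $\xi$ from both half-lines with their correct signs and verify that all the $v_x(\xi)$-dependent contributions cancel (they do, thanks to $v_x(\xi)=0$) while the $v(\xi)^2$ contributions add up to exactly $-12M^2$. A subtle point worth checking is that $v\in H^2(\R)$ guarantees $v,v_x\to 0$ at $\pm\infty$, so the boundary terms at infinity vanish and only the terms at $\xi$ survive. Verifying the exact numerical constant $E(\varphi_c)=12(c/6)^2$ and the completion of the square for \eqref{improvment} requires care but is routine once \eqref{GG2} and \eqref{P8} are in hand.
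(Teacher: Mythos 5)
Your proposal is correct. For \eqref{GG2} you are carrying out in full the computation that the paper simply delegates to \cite{LL} (``integration by parts and the fact that $v_x(\xi)=0$''): your bookkeeping of the cross terms is right, the two boundary contributions $-6v(\xi)^2$ from the half-lines do add up to $-12M^2$, and $v\in H^2(\R)\hookrightarrow C^1_0(\R)$ justifies both the pointwise evaluation at $\xi$ and the vanishing of the boundary terms at infinity. For \eqref{improvment}, however, you take a genuinely different route from the paper. The paper observes that $\rho_c(\cdot-\xi)$ solves the homogeneous equations $2\rho_c-\rho_c''\mp3\rho_c'=0$ on the two half-lines, so that $g$ is unchanged when $v$ is replaced by $w=v-\rho_c(\cdot-\xi)=(4-\partial_x^2)^{-1}[u-\varphi_c(\cdot-\xi)]$ (note $w_x(\xi)=v_x(\xi)-\rho_c'(0)=0$), and then it simply re-applies \eqref{GG2} to $u-\varphi_c(\cdot-\xi)$, for which the role of $M$ is played by $w(\xi)=M-c/6$. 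You instead combine \eqref{GG2} with the quadratic identity \eqref{P8} of Lemma \ref{Lemma P2}, use $E(\varphi_c)=c^2/3=12(c/6)^2$, and complete the square in $M$; the algebra
$$
\frac{c^2}{3}+4cM-12M^2-\frac{2c^2}{3}=-12\Bigl(\frac{c}{6}-M\Bigr)^2
$$
indeed closes. The two arguments are algebraically equivalent, but the paper's version is structural (it explains \emph{why} the constant $12(c/6-M)^2$ appears: it is the $-12M^2$ term of \eqref{GG2} applied to the perturbation) and reusable verbatim in the localized setting of Lemma \ref{Lemma P444}, whereas yours is shorter given that Lemma \ref{Lemma P2} is already available. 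Only one small caveat: \eqref{P8} is an identity valid for arbitrary $\xi$ (its proof only uses $(1-\partial_x^2)\varphi_c(\cdot-\xi)=2c\delta_\xi$), so your invocation of it at the maximum point is legitimate, but you should state that you are using it as such rather than as a statement tied to the maximizer.
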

\begin{proof} The first identity is proven in \cite{LL} by combining  integration by parts and the fact that 
 $ v_x(\xi)=0 $.
To prove the second identity we remark that by the definition of $\rho_c$ in (\ref{1.88}), it holds 
\begin{equation*}
  \left\{
    \begin{aligned}
     &2\rho_c(\cdot-\xi)-\rho''_c(\cdot-\xi)+3\rho'_c(\cdot-\xi)=0,\qquad \forall x\le \xi,\\
     &2\rho_c(\cdot-\xi)-\rho''_c(\cdot-\xi)-3\rho'_c(\cdot-\xi)=0,\qquad \forall x\ge \xi.\\
    \end{aligned}
  \right.
  \end{equation*}
Therefore, setting $w=v-\rho_c(\cdot-\xi)=(4-\partial_x^2)^{-1}[u-\varphi_c(\cdot-\xi)]$ one may rewrite $g$ as
\begin{equation}
  g=\left\{
    \begin{aligned}
     &2w+w_{xx}-3w_{x}~~\text{on} \; ]-\infty,\xi],\\
     &2w+w_{xx}+3w_{x}~~\text{on} \; [\xi,+\infty[\\
    \end{aligned}
  \right.
  \label{GG4}
\end{equation}
and \eqref{improvment} follows by applying \eqref{GG2} with $ u $ replaced by $ u-\varphi_c(\cdot-\xi)$.
\end{proof}
\begin{lemma}[See \cite{LL}]\label{Lemma P5}
Let $u\in L^{2}(\mathbb{R})$ and $v=(4-\partial^{2}_{x})^{-1}u\in H^{2}(\mathbb{R})$. Denote by $M=\max_{\R}v(\cdot)=v(\xi)$ and define the function $h$ by
\begin{equation}
  h(x)=\left\{
    \begin{aligned}
     &-v_{xx}(x)-6v_{x}(x)+16v(x),\qquad \forall x\le \xi.\\
     &-v_{xx}(x)+6v_{x}(x)+16v(x),\qquad \forall x\ge \xi.\\
    \end{aligned}
  \right.
  \label{HH1}
\end{equation}
Then, it holds
\begin{equation}
F(u)-144M^{3}=\int_{\mathbb{R}}h(x)g^{2}(x)dx.
\label{HH2}
\end{equation}
\end{lemma}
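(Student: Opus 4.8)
The plan is to reduce the claim to a short pointwise algebraic identity. I would first establish \eqref{HH2} for smooth, rapidly decaying $u$ (say $u\in H^\infty(\R)$, which is the only case needed after the density reduction carried out in the previous section) and then extend it to the general case by continuity, both sides of \eqref{HH2} being continuous with respect to the relevant convergence. For such $u$ I would use $u=4v-v_{xx}$ to write $g$, $h$ and $u$ as constant-coefficient differential expressions in the single smooth function $v$, split the integral at $\xi$, and show that on each of the two half-lines the integrand $hg^2-u^3$ is an exact derivative. Granting this, the integral collapses to boundary contributions at $\xi$ and at $\pm\infty$, the latter vanishing because $v,v_x\to0$ at infinity.

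The key observation, which makes the computation short, is that on each half-line the operators defining $g$, $h$ and $u$ share a common first-order factor. On $]-\infty,\xi]$ one has
\begin{equation*}
g=(\partial_x-1)(\partial_x-2)v,\qquad h=-(\partial_x+8)(\partial_x-2)v,\qquad u=-(\partial_x+2)(\partial_x-2)v,
\end{equation*}
so that, setting $\phi:=(\partial_x-2)v=v_x-2v$, one gets $g=\phi_x-\phi$, $h=-\phi_x-8\phi$ and $u=-\phi_x-2\phi$. A direct expansion then yields the pointwise identity
\begin{equation*}
hg^2-u^3=27\,\phi^2\phi_x=9\,\frac{d}{dx}\big(\phi^3\big)\qquad\text{on }]-\infty,\xi].
\end{equation*}
Symmetrically, on $[\xi,+\infty[$ the common factor is $(\partial_x+2)$; introducing $\tilde\phi:=v_x+2v$ one finds $g=\tilde\phi_x+\tilde\phi$, $h=-\tilde\phi_x+8\tilde\phi$, $u=-\tilde\phi_x+2\tilde\phi$, and hence $hg^2-u^3=27\,\tilde\phi^2\tilde\phi_x=9\,\frac{d}{dx}(\tilde\phi^3)$ there.

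It then remains to evaluate the boundary terms. Since $v,v_x\to0$ at $\pm\infty$ we have $\phi^3,\tilde\phi^3\to0$ there, while at $\xi$ we use $v_x(\xi)=0$ and $v(\xi)=M$. Integrating the two half-line identities gives
\begin{equation*}
\int_{-\infty}^{\xi}(hg^2-u^3)\,dx=9\,\phi(\xi)^3=9(-2M)^3=-72M^3,\qquad \int_{\xi}^{+\infty}(hg^2-u^3)\,dx=-9\,\tilde\phi(\xi)^3=-9(2M)^3=-72M^3,
\end{equation*}
and adding them yields $\int_\R(hg^2-u^3)\,dx=-144M^3$, that is \eqref{HH2}. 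I do not expect a genuine obstacle here: once the shared factor $(\partial_x\mp2)$ is spotted the algebra is immediate, the only point requiring a little care being the regularity justification, namely checking that $\phi^3,\tilde\phi^3\in W^{1,1}$ on each half-line so that the fundamental theorem of calculus applies and that the pointwise value $v(\xi)=M$ with $v_x(\xi)=0$ is legitimate, which is exactly what the reduction to smooth $u$ settles.
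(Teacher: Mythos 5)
Your proof is correct, and I checked the algebra: with $u=-(\partial_x+2)(\partial_x-2)v$, and on $]-\infty,\xi]$ the factorizations $g=(\partial_x-1)(\partial_x-2)v$, $h=-(\partial_x+8)(\partial_x-2)v$, one indeed gets, with $\phi=v_x-2v$,
\begin{equation*}
hg^2-u^3=\bigl(-\phi'-8\phi\bigr)\bigl(\phi'-\phi\bigr)^2+\bigl(\phi'+2\phi\bigr)^3=27\,\phi^2\phi',
\end{equation*}
and symmetrically on $[\xi,+\infty[$; the boundary evaluations $\phi(\xi)=-2M$, $\tilde\phi(\xi)=2M$ (using $v_x(\xi)=0$ at the interior maximum) then give $-72M^3-72M^3=-144M^3$, which is exactly \eqref{HH2}. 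The paper itself gives no proof of this lemma: it is quoted from \cite{LL}, where the corresponding identity is obtained by a lengthier direct integration-by-parts computation on each half-line. Your observation that $g$, $h$ and $u$ share the first-order factor $(\partial_x\mp2)$, so that $hg^2-u^3$ collapses to the exact derivative $9(\phi^3)'$, is a genuinely cleaner and self-contained route to the same identity. One small simplification is available on the regularity side: since $v\in H^2(\R)$ you already have $\phi,\tilde\phi\in H^1(\R)$, hence $\phi^3,\tilde\phi^3\in W^{1,1}$ with zero limits at infinity and $v\in C^1$ so that $v_x(\xi)=0$ is immediate; the fundamental theorem of calculus therefore applies directly and the preliminary mollification/density step is not really needed (it only matters if one worries about $u^3$ and $hg^2$ being integrable for a bare $u\in L^2$, a gap already present in the statement as written and harmless in the paper's application where $u\in Y\hookrightarrow L^\infty$).
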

Gathering Lemmas \ref{Lemma P2}, \ref{Lemma P4} and \ref{Lemma P5} and making use of  \eqref{kj2} we derive the crucial 
  relation that linked  $ E(u)$, $F(u) $ and the maximum $M $ of $ v=(4-\partial^{2}_{x})^{-1}u $.

\begin{lemma}\label{Lemma P6}
Let $ \eps>0 $ and $u\in L^{2}(\mathbb{R})$   be such that $v=(4-\partial_x^2)^{-1} u $ has got a unique point $ \xi $ of maximum on $ \R $ with 
\begin{equation}\label{assum6}
\Vert u-\varphi_c(\cdot-\xi)\Vert_{\H}\le 3(2+\eps) \eps, \quad  \Vert u-\varphi_c(\cdot-\xi)\Vert_{L^\infty}\leq   10^{-5} c  \quad  \; \text{and} \; 
u-6v\le  \eps^2 \quad \text{on} \quad  \Theta_\xi\quad \; .
\end{equation}
Then, setting $ M= v(\xi) $, it holds 
 \begin{equation}
M^{3}-\frac{1}{4}E(u)M+\frac{1}{72}F(u)\leq \frac{(2+c)^2}{8} \eps^4\; .
\label{P33}
\end{equation}
\end{lemma}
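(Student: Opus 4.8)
The plan is to combine the three structural identities from Lemmas \ref{Lemma P2}, \ref{Lemma P4}, and \ref{Lemma P5} to produce a single inequality for the cubic polynomial in $M$ appearing on the left of \eqref{P33}. First I would recall from \eqref{GG2} that $\int_\R g^2 = E(u)-12M^2$ and from \eqref{HH2} that $F(u)-144M^3 = \int_\R h\, g^2$. Rearranging the second gives
\begin{equation}\label{planA}
M^3 - \frac{1}{144}F(u) = -\frac{1}{144}\int_\R h(x) g^2(x)\, dx \; .
\end{equation}
The idea is to bound the right-hand side from above by comparing $h$ with a suitable constant multiple of $g^2$ so that the $\int g^2$ factor can be replaced by $E(u)-12M^2$ via \eqref{GG2}.

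The key algebraic observation I would exploit is the relation between $g$ and $h$. From their definitions \eqref{GG1} and \eqref{HH1} one computes, on each half-line, that $h$ and $g$ differ by a controlled amount: writing things in terms of $u$ and $v$, one has on $]-\infty,\xi]$ the expressions $g = u - 6v_x + 12v$ and $h = -v_{xx} - 6v_x + 16v = (4v - u) - 6v_x + 16v = -u + 6v_x\cdot(\text{sign corrections}) + 20v$, so that the combination $h - \lambda g$ can be arranged, using $v_{xx}=4v-u$, to be expressed through $u-6v$ plus terms involving $v$ and $v_x$. The role of the hypothesis $u - 6v \le \eps^2$ on $\Theta_\xi$ together with the pointwise smallness $\|u-\varphi_c(\cdot-\xi)\|_{L^\infty}\le 10^{-5}c$ is precisely to force $h$ to be bounded above (by something close to $6M$, the value associated to the exact peakon) on the support where $g$ is not negligible. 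This is the mechanism that in the non-negative-momentum case of \cite{LL} came from $y\ge 0$; here it is replaced by the dynamically-derived bound \eqref{kj2}. So I would split $\int h g^2$ as $\int_{\Theta_\xi} + \int_{\R\setminus\Theta_\xi}$, control the inner integral using $h \le 6M + C\eps^2$ (or similar) coming from $u-6v\le\eps^2$, and control the outer integral using \eqref{hyg3}, which makes $v$, $v_x$ and hence $g$ exponentially small off $\Theta_\xi$.

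Assembling these bounds into \eqref{planA} and using $\int g^2 = E(u)-12M^2 \le \|u-\varphi_c(\cdot-\xi)\|_\H^2 \le (3(2+c)\eps)^2$ from \eqref{improvment} and \eqref{assum6}, the right-hand side becomes $-\frac{1}{144}\cdot 6M \cdot (E(u)-12M^2)$ plus an error of order $\eps^2 \cdot \eps^2 = \eps^4$. The main term $-\frac{1}{24}M(E(u)-12M^2) = -\frac{1}{24}M E(u) + \frac{1}{2}M^3$ then recombines with $M^3$ on the left; after multiplying through and tidying constants I expect the clean relation $M^3 - \frac14 E(u) M + \frac{1}{72}F(u)$ to emerge, bounded by a constant times $(2+c)^2\eps^4$, matching \eqref{P33}.

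The main obstacle I anticipate is bookkeeping the constants in the error term so as to land exactly on $\frac{(2+c)^2}{8}\eps^4$ rather than merely $O(\eps^4)$. Two sub-difficulties feed into this: first, the factor $M$ multiplying $(E(u)-12M^2)$ is itself only approximately $c/6$, so I must track that $M = \frac{c}{6}+O(\eps)$ and verify the resulting cross terms stay at order $\eps^4$ and not $\eps^3$; this will rely on the sharp form of \eqref{improvment}, namely that $E(u)-12M^2 = \|u-\varphi_c(\cdot-\xi)\|_\H^2 - 12(\frac{c}{6}-M)^2$ contains the favorable negative correction $-12(\frac{c}{6}-M)^2$ that can absorb potentially dangerous terms. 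Second, I must confirm that the contribution of the region $\R\setminus\Theta_\xi$, though exponentially small, is genuinely dominated by the stated $\eps^4$ bound rather than by a fixed constant; here the explicit estimates \eqref{zs2} and \eqref{hyg3} should suffice. Once the constants are pinned down, the inequality \eqref{P33} follows directly.
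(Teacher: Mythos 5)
Your skeleton is the paper's: combine \eqref{GG2}, \eqref{HH2} and \eqref{improvment} with a pointwise upper bound on $h$ extracted from $u-6v\le\eps^2$. But two of your concrete steps would fail as written. First, the constant: the bound you need is $h\le 18M+\eps^2$ on \emph{all} of $\R$, not ``something close to $6M$''. On $]-\infty,\xi]$ one has $h=-v_{xx}-6v_x+16v=u-6v_x+12v$ (note $-v_{xx}=u-4v$; your computation substitutes $-v_{xx}=4v-u$ and lands on $-u+20v+\cdots$), so $h=(u-6v)+18v-6v_x\le\eps^2+18M$ on $\Theta_\xi\cap\{x\le\xi\}$, using $u-6v\le\eps^2$, $v\le M$ and $v_x\ge0$ there (Lemma \ref{LemmaPP}); symmetrically for $x\ge\xi$. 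The value $18M$ is forced: only with it does $18M\int g^2=18M\bigl(E(u)-12M^2\bigr)$ combine with the $144M^3$ of \eqref{HH2} to reproduce \emph{exactly} the polynomial $M^3-\tfrac14E(u)M+\tfrac1{72}F(u)$, leaving $\eps^2\int g^2\le 9(2+c)^2\eps^4$ as the sole error (whence $9/72=1/8$). With $6M$ the cubic and the $ME$ terms do not cancel and you are left with an $O(1)$ discrepancy, not $O(\eps^4)$. Contrary to your worry, no approximation $M=c/6+O(\eps)$ enters the main term — the cancellation is an exact algebraic identity — so there are no $\eps^3$ cross terms and the correction $-12(c/6-M)^2$ in \eqref{improvment} is not needed for this lemma.

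Second, the outer region. You propose to split $\int hg^2$ over $\Theta_\xi$ and its complement and control the outer piece because ``$g$ is exponentially small off $\Theta_\xi$''. It is not: only the peakon profile decays there, the perturbation $u-\varphi_c(\cdot-\xi)$ has no spatial localization, and all that is available is $\int_{\R\setminus\Theta_\xi}g^2\le\int_\R g^2\le 9(2+c)^2\eps^2$. Treating the outer integral as a separate error therefore contributes $O(c\,\eps^2)$, which destroys the $\eps^4$ bound. The paper avoids any splitting of the integral: by \eqref{hyg3} one has $|u|,|v|,|v_x|\le c/100$ off $\Theta_\xi$, hence $h\le 19c/100\le 18M$ there (since $M\ge c/6-10^{-5}c$), so the pointwise bound $h\le 18M+\eps^2$ is global and the whole of $\int g^2$ sits inside the exactly-cancelling main term. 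Finally, the direction of your rearranged identity is muddled: an upper bound on $h$ yields an upper bound on $\int hg^2$, hence on $F(u)$, which is precisely the direction needed since $F$ enters \eqref{P33} with a $+$ sign; bounding $-\tfrac{1}{144}\int hg^2$ ``from above'' would require a lower bound on $h$ that the hypotheses do not provide. That last point is cosmetic once the first two are fixed, but the constant $18M$ and the global (non-split) treatment of $h$ are essential.
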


\begin{proof}
The key is to show that the function $ h $ defined in Lemma \ref{Lemma P5} satisfies $h\leq 18M+\eps^2$ on $\R$. We notice that $ h $ may be rewritten as 
$$
  h(x)=\left\{
    \begin{aligned}
     &u(x)-6v_{x}(x)+12v(x),\qquad \forall x\le \xi.\\
     &u(x)+6v_{x}(x)+12v(x),\qquad \forall x\ge \xi.\\
    \end{aligned}
  \right.
  $$
and  that \eqref{jd} together with  the second inequality in \eqref{assum6} force
 \begin{equation}\label{nz}
|M-c/6|\le 10^{-5} c \; .
 \end{equation}
 Moreover,  Lemma \ref{LemmaPP}  ensures that $ v_x >0 $ on $]\xi-6.7,\xi[ $ and 
$ v_x<0 $ on $ ]\xi, \xi+6.7[ $. 

 We divide $ \R $ into three intervals.  For $ x\in\R/\Theta_{\xi}$, \eqref{hyg3} with $ r=\xi  $  and then \eqref{nz}  ensure that 
\begin{align}
h(x)&\le |u(x)|+6\vert v_x(x)\vert+12|v(x)| \le \frac{19 c}{100}\le 18M.
\label{hles18m}
\end{align}
For  $\xi-6.7<x<\xi$, then $v_x\geq0$ and using that $u-6v\leq\eps^2$ on $\Theta_\xi $, we get 
$$
h(x)\leq18M+\eps^2.
$$
If $\xi<x<\xi+6.7$, then $v_x\leq0$ and using that $u-6v\leq\eps^2$ on $\Theta_\xi $, we get 
$$
h(x)\leq18M+\eps^2.
$$
Therefore it holds, 
$$
h\leq18M+\eps^2 \quad \text{on} \quad \R\; .
$$
Combining \eqref{GG2}, \eqref{improvment}, \eqref{HH2} and the first inequality in \eqref{assum6}, one eventually gets
\begin{align*}
F(u)-144M^3 & =\int_{\R}h(x)g^2(x)dx\leq 18M\big(E(u)-12M^2\big)+\varepsilon^2 \|u-\varphi_c(\cdot-\xi)\|_{\H}^2\\
& \le 18 M E(u)-72M^3 + 9(2+c)^2 \varepsilon^4 \; .
\end{align*}
that completes the proof of the lemma.
\end{proof}
Finally, we will need the following lemma that links the distance between $F(u_0)$ and $F(\varphi_c) $ to  the distance between $ u_0 $ and $ \varphi_c $ in $L^2(\R) $.
\begin{lemma}\label{Lemma P1}
Let $u_0\in Y$ that satisfies Hypothesis \ref{hyp}. If for some $ 0<\gamma<1 $ it holds 
$$
\|u_0-\varphi_{c}\|_{\mathcal{H}}\le \gamma
$$
then 
\begin{equation}
|E(u_0)-E(\varphi_{c})|\le 2 \gamma (2+c)
\label{P4}
\end{equation}
and 
\begin{equation}
|F(u_0)-F(\varphi_{c})|\le 6 \gamma (2+c)^2 \;, 
\label{P5}
\end{equation}
where $\varphi_c$, $\rho_c$ are defined  in (\ref{1.7}), (\ref{1.88}).
\end{lemma}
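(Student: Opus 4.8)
The plan is to estimate $E(u_0)-E(\varphi_c)$ and $F(u_0)-F(\varphi_c)$ directly from their definitions as quadratic and cubic functionals, exploiting the bilinear/trilinear structure together with the equivalence $\|\cdot\|_{\H}\simeq\|\cdot\|_{L^2}$ from \eqref{1.5}. Set $w=u_0-\varphi_c$, so that by hypothesis $\|w\|_{\H}\le\gamma$ and hence $\|w\|_{L^2}\le 2\gamma$. For the energy, I would write $E(u_0)=\langle u_0,u_0\rangle_{\H}=E(\varphi_c)+2\langle\varphi_c,w\rangle_{\H}+\|w\|_{\H}^2$, where $\langle\cdot,\cdot\rangle_{\H}$ is the bilinear form associated to $E$. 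The cross term is controlled by Cauchy--Schwarz, $|\langle\varphi_c,w\rangle_{\H}|\le\|\varphi_c\|_{\H}\|w\|_{\H}$, and since $\|\varphi_c\|_{\H}^2=E(\varphi_c)$ is an explicit constant (a multiple of $c^2$), together with $\|w\|_{\H}\le\gamma<1$ this should give the bound \eqref{P4} with the factor $2(2+c)$, the $(2+c)$ absorbing $\|\varphi_c\|_{\H}$ plus the lower-order $\|w\|_{\H}^2\le\gamma$ remainder.

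For $F$ I would use the trilinear expansion of the cubic functional $F(u)=\int u^3$: writing $u_0=\varphi_c+w$ gives
\begin{equation*}
F(u_0)-F(\varphi_c)=\int_{\R}\bigl(3\varphi_c^2 w+3\varphi_c w^2+w^3\bigr)\,dx.
\end{equation*}
The first term is estimated by $3\|\varphi_c\|_{L^4}^2\|w\|_{L^2}$ (Hölder), or more cleanly by $3\|\varphi_c^2\|_{L^2}\|w\|_{L^2}$; the second by $3\|\varphi_c\|_{L^\infty}\|w\|_{L^2}^2$; the cubic remainder by $\|w\|_{L^\infty}\|w\|_{L^2}^2$. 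Here I must control $\|w\|_{L^\infty}$, and this is exactly where Lemma \ref{LemmaPP1} (more precisely \eqref{PP2}) enters: under Hypothesis \ref{hyp} it yields $\|w\|_{L^\infty}\lesssim\|w\|_{\H}^{2/3}(1+\cdots+2c)$, so $\|w\|_{L^\infty}$ stays bounded in terms of $c$ and $\gamma<1$. Using $\|\varphi_c\|_{L^\infty}=c$ and the explicit $L^2$- and $L^4$-norms of $\varphi_c$ (all constant multiples of appropriate powers of $c$), each term is at most a constant times $\gamma(2+c)^2$, and summing should produce the claimed bound $6\gamma(2+c)^2$ after checking that the numerical constants fit.

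The main obstacle is bookkeeping of the explicit constants so that everything collapses into the clean factors $(2+c)$ and $(2+c)^2$. The genuinely delicate point is the cubic term: a naive bound would involve $\|w\|_{L^\infty}\|w\|_{L^2}^2$, which is $O(\gamma^2)$ rather than $O(\gamma)$, so one uses $\gamma<1$ to absorb the extra power and fold it into the linear-in-$\gamma$ estimate; one must verify that the use of \eqref{PP2} to bound $\|w\|_{L^\infty}$ does not reintroduce an uncontrolled $c$-dependence beyond the allotted $(2+c)^2$. I expect the whole argument to be elementary once the norms of $\varphi_c$ and the $L^\infty$-bound on $w$ are in hand; the only real care is ensuring the final constants are no larger than $2$ and $6$ respectively, which may require grouping the $\gamma$-powers using $\gamma\le\gamma^{2/3}\le 1$ judiciously.
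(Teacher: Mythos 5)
Your proposal is correct and follows essentially the same route as the paper: the energy estimate comes from the bilinear structure of $E$ plus Cauchy--Schwarz, and the cubic estimate comes from a multilinear expansion of $\int u^3$ combined with H\"older, the explicit norms of $\varphi_c$, the $L^\infty$ control supplied by Lemma \ref{LemmaPP1} under Hypothesis \ref{hyp}, and $\gamma<1$ to absorb the higher powers of $\gamma$. The only cosmetic difference is that the paper writes $u_0^3-\varphi_c^3=(u_0-\varphi_c)(u_0^2+u_0\varphi_c+\varphi_c^2)$ and bounds $\|u_0\|_{L^\infty}$, while you expand $(\varphi_c+w)^3-\varphi_c^3$ and bound $\|w\|_{L^\infty}$; the two are equivalent.
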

\begin{proof}
By the triangle inequality and \eqref{EE},
\begin{align*}
|E(u_0)-E(\varphi_{c})|& \le \|u_0-\varphi_c\|_{\H} (\|u_0\|_{\H}+\|\varphi_c\|_{\H})\\
& \le \|u_0-\varphi_c\|_{\H} ( \|u_0-\varphi_c\|_{\H} +2\|\varphi_c\|_{\H})\\
& \le \gamma (\gamma+\frac{2}{\sqrt{3}} c ) 
\end{align*}
Now, 
\begin{align*}
|F(u_0)-F(\varphi_{c})|& \le \|u_0-\varphi_c\|_{L^2} \Bigl\| u^2+2u \varphi_c +\varphi_c^2\Bigr\|_{L^2}\\
& \le \|u_0-\varphi_c\|_{L^2} 
\Bigl[  \|u_0\|_{L^\infty} \|u_0\|_{L^2} + \| \varphi_c\|_{L^\infty} (2 \|u_0\|_{L^2} + \|\varphi_c\|_{L^2} )\Bigr] \\
\end{align*}
and Lemma \ref{LemmaPP1} together with \eqref{1.5} then yield 
$$
|F(u_0)-F(\varphi_{c})|\le 2\gamma \Bigl[4 \sqrt{\gamma} (2+c) +c(4\gamma+3 c)\Bigr]\; .
$$
\end{proof}

According   to  \eqref{kjj}-\eqref{kj2}  and  Lemma \ref{Lemma P6}, setting $M=v(\xi(T)) $, we get 
$$
M^{3}-\frac{1}{4}E(u)M+\frac{1}{72}F(u)\leq \frac{(2+c)^2\eps^4}{8}\; .
$$
The conservation of $ E$ and $ F $ together with Lemma \ref{Lemma P1} and \eqref{defdelta} then lead to
\begin{eqnarray}
M^{3}-\frac{1}{4}E(\varphi_{c})M+\frac{1}{72}F(\varphi_{c}) & \leq& \frac{1}{4}\vert E(u_0)-E(\varphi_c)\vert+\frac{1}{72}\vert F(u_0)-F(\varphi_c)\vert+\frac{(2+c)^2 \eps^4}{8}\nonumber \\
 & \le &\eps^4 (2+c)^2
\label{55;}
\end{eqnarray}
Now, by \eqref{EE} and (\ref{i}) one can check that $E(\varphi_{c})=c^{2}/3$ and $F(\varphi_{c})=2c^{3}/3$, 
 so that  (\ref{55;}) becomes
$$
\left(\frac{c}{6}-M\right)^2\left(M+\frac{c}{3}\right)  \le \eps^4 (2+c)^2\; .
$$
Finally, since according to \eqref{nz} $ M\ge 0 $, we deduce that 
$$
\big|\frac{c}{6}-M\big| \le \sqrt{\frac{3}{c}} \; (2+c) \eps^2
$$
which together with  Lemma \ref{Lemma P2} , Lemma \ref{Lemma P1}  and \eqref{defdelta} ensure that 
$$
\|u(T)-\varphi_c(x-\xi(T)) \|_{\H}^2 \le \eps^2 \Bigl( 4  \sqrt{3c}(2+c) + 2(2+c)\eps^2\Bigr)
\le 4 (2+c)^2 \eps^2  \; .
$$
This completes the proof of  \eqref{kkkk} and thus of  \eqref{P3}. Note that \eqref{P44} then follows by using Lemma \ref{LemmaPP1}.

\section{Stability of a train of well-ordered antipeakons-peakons}\label{sec-multi}

In this section, we generalize the stability result to the sum of well ordered trains of antipeakons-peakons (see fig \ref{twowellodantipeakon} and fig \ref{wellodantipeakon}).
\begin{figure}[!htb]
\vspace{0cm}
\centering
\subfloat[
 Two antipeakons at speeds $c_i=1,4$.]
{\includegraphics[width=8cm, height=6.5cm]{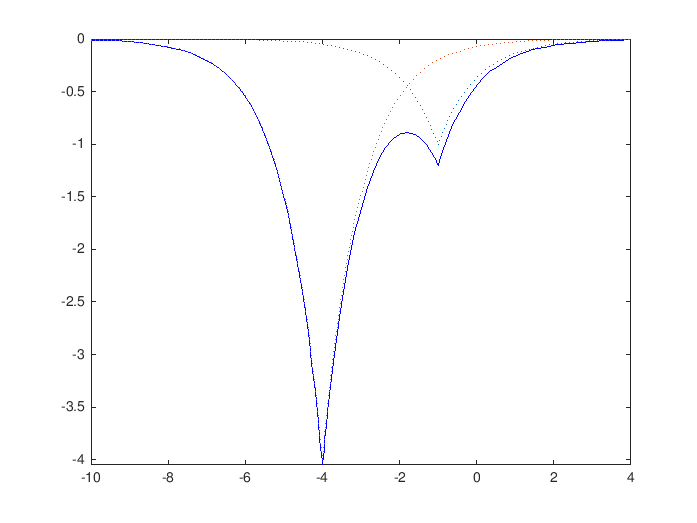}\label{wanitpeakonnnnn}}
\subfloat[
 Two peakons at speeds $c_i=2,4$.]
{\includegraphics[width=8cm, height=6.5cm]{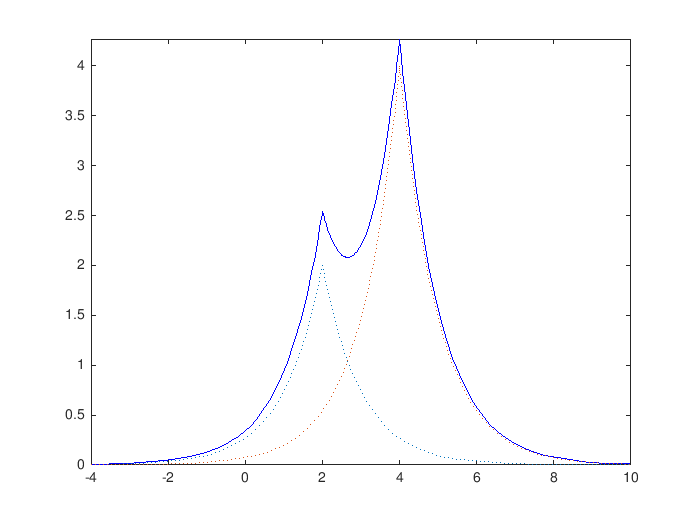}\label{wellantipeakonnnnn}}
\caption{Summing two antipeakons and peakons profiles at time $t=1$ with different speeds. }
\label{twowellodantipeakon}
\end{figure}
\begin{figure}[!htb]
\vspace{0cm}
\centering
\subfloat[
 At time $t=1$.]
{\includegraphics[width=8cm, height=6.5cm]{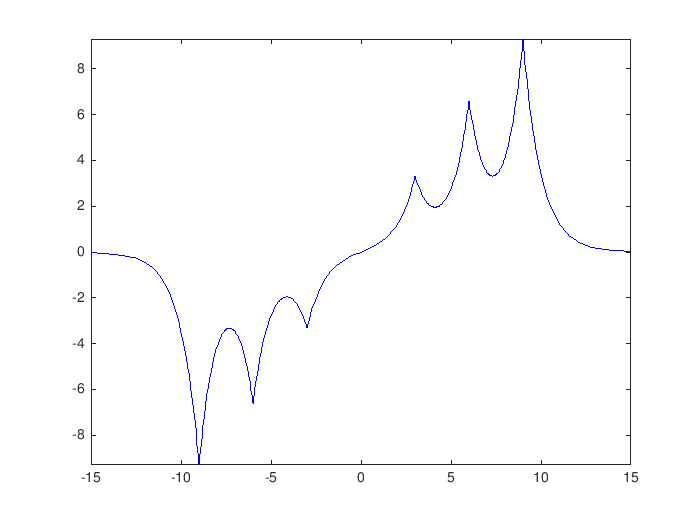}\label{wanitpeakon}}
\subfloat[
 At time $t=3$.]
{\includegraphics[width=8cm, height=6.5cm]{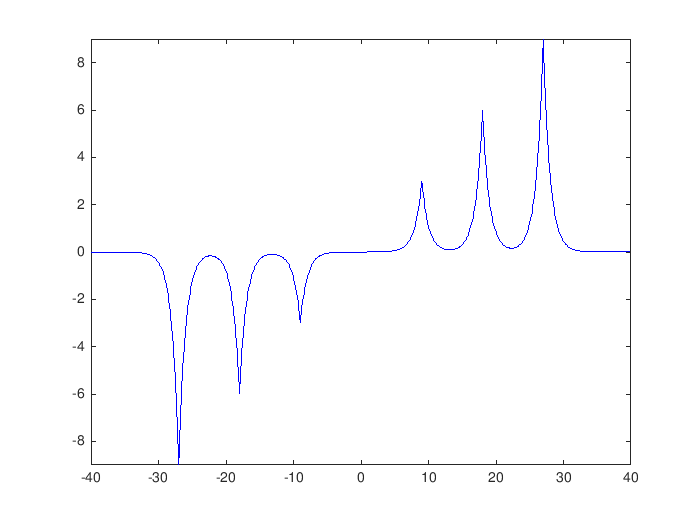}\label{wellantipeakonn}}
\caption{Three well-ordered trains of antipeakons and peakons profiles at different speeds $c_i= 3,6,9$. }
\label{wellodantipeakon}
\end{figure}
Let be given $ N_- + N_+ $  ordered  speeds $\vec{c}=(c_{-N_-},..,c_{-1},c_1,..,c_{N_+} )\in \R^{N_-+N_+} $ with 
\begin{equation} \label{speed1}
c_{-N_-}<..<c_{-1}<0 <c_1<..<c_{N_+} \; .
\end{equation}
We set 
\begin{equation} \label{speed2}
\|\vec{c}\|_1=\sum_{j=-N_-}^{N_+}  |c_j|\quad\text{and}\quad \sigma(\vec{c})=\min_{i\in
 [[1-N_-,N_+]]} |c_i-c_{i-1}| \; 
\end{equation}
where to simplify the notations we set 
\begin{equation} \label{speed0}
c_{0}=0 \; .
\end{equation}
For $ \alpha>0 $ and $ L>0 $ and $ \vec{c}$ satisfying \eqref{speed1}-\eqref{speed2}, we define the following neighborhood of all the sums
  of $N_-+N_+$ well-ordered antipeakons and peakons of speed $ c_{-N_-},..,c_{-1},c_1,..,c_{N_+} $ with spatial shifts $ z_j $ that satisfied
$ z_j-z_{q}\ge L $ for $ j>q$. 
  \begin{equation}
U(\alpha,L,\vec{c}) = \Bigl\{
u\in L^2(\R), \, \inf_{z_j-z_{q}> L, \; j>q} \big\|u-\sum_{j=-N_-\atop j\neq 0}^{N_+}\varphi_{c_j} (\cdot-z_j) \big\|_{\mathcal{H}} < \alpha \Bigr\}\; .
\end{equation}

We start by establishing the following lemma that linked the distance in $ L^\infty $ to the train of antipeakons-peakons 
 with the distance in $ \H $.
Indeed, applying  Lemma \ref{LemmaPP1} with $ \psi=\sum_{j=-N_-\atop j\neq 0}^{N_+} \varphi_{c_j}(\cdot -z_j) $ and 
 observing that 
 $$
 \|\psi \|_{L^\infty}+\|\psi'\|_{L^\infty} \le 2\sum_{j=-N_-\atop j\neq 0}^{N_+}\|\varphi_{c_j}\|_{L^\infty}\le 2 
  \|\vec{c} \|_{1} \; ,
 $$
 we get the following lemma.
\begin{lemma}[$L^{\infty}$  approximations]\label{LemmaPP1loc}
Let $(c_j, z_j) \in\R^2$, $j\in [[N_-,N_+]]\setminus\{0\} $, and $u\in Y $, satisfying Hypothesis 1, then  
\begin{equation}
\Big\|u-\sum_{j=-N_-\atop j\neq 0}^{N_+}\varphi_{c_j}(\cdot-z_j)\Big\|_{L^{\infty}(\mathbb{R})}\le 2 \Big\|u-\sum_{j=-N_-\atop j\neq 0}^{N_+}\varphi_{c_j}(\cdot-z_j)\Big\|_{\mathcal{H}}^{2/3} \Big(1+  \sqrt{2} \Big\|u-\sum_{j=-N_-\atop j\neq 0}^{N_+}\varphi_{c_j}(\cdot-z_j)\Big\|_{\mathcal{H}}^{2/3}+ 2 \|\vec{c} \|_1 \Big) \; .
\label{PP2loc}
\end{equation}
In particular, if moreover $ \|u-\sum_{j=-N_-\atop j\neq 0}^{N_+}\varphi_{c_j}(\cdot-z_j)\|_{\mathcal{H}}
\le 1/2 $ then 
\begin{equation}
\Big\|u-\sum_{j=-N_-\atop j\neq 0}^{N_+}\varphi_{c_j}(\cdot-z_j)\Big\|_{L^{\infty}(\mathbb{R})}\le 
4(1+\|\vec{c}\|_1)  \Big\|u-\sum_{j=-N_-\atop j\neq 0}^{N_+}\varphi_{c_j}(\cdot-z_j)\Big\|_{\mathcal{H}}^{2/3} \; .
\label{PP3loc}
\end{equation}
\end{lemma}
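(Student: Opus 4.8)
The plan is to obtain both estimates as immediate specializations of the general approximation bound of Lemma~\ref{LemmaPP1}, applied with the admissible profile $\psi=\sum_{j=-N_-\atop j\neq 0}^{N_+}\varphi_{c_j}(\cdot-z_j)$. This $\psi$ indeed lies in $W^{1,\infty}(\R)\cap L^2(\R)$, since each summand $\varphi_{c_j}(\cdot-z_j)=c_je^{-|\cdot-z_j|}$ is Lipschitz with exponential decay. Because the only $\psi$-dependent quantity in \eqref{PP22} is $\|\psi\|_{L^\infty}+\|\psi'\|_{L^\infty}$, the whole argument reduces to controlling this sum by $2\|\vec{c}\|_1$.

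For that I would use that a single peakon profile satisfies $\|\varphi_c\|_{L^\infty}=\|\varphi_c'\|_{L^\infty}=|c|$, the derivative of $ce^{-|x|}$ being $-c\,\mathrm{sgn}(x)e^{-|x|}$, of sup-norm $|c|$. The triangle inequality then gives $\|\psi\|_{L^\infty}\le\sum_j|c_j|=\|\vec{c}\|_1$ and likewise $\|\psi'\|_{L^\infty}\le\|\vec{c}\|_1$, whence $\|\psi\|_{L^\infty}+\|\psi'\|_{L^\infty}\le 2\|\vec{c}\|_1$. Substituting this into \eqref{PP22} produces \eqref{PP2loc} with no further work; this is precisely the computation already sketched just before the statement.

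To deduce the refined inequality \eqref{PP3loc}, I would set $D:=\big\|u-\sum_{j=-N_-\atop j\neq 0}^{N_+}\varphi_{c_j}(\cdot-z_j)\big\|_{\H}$ and use the extra hypothesis $D\le 1/2$. Rewriting the right-hand side of \eqref{PP2loc} as $2D^{2/3}\big(1+\sqrt{2}D^{2/3}+2\|\vec{c}\|_1\big)$, I would note that $D^{2/3}\le 2^{-2/3}$, so that $\sqrt{2}D^{2/3}\le 2^{-1/6}<1$ and hence $1+\sqrt{2}D^{2/3}\le 2$. Adding $2\|\vec{c}\|_1$ gives $1+\sqrt{2}D^{2/3}+2\|\vec{c}\|_1\le 2(1+\|\vec{c}\|_1)$, and multiplying by $2D^{2/3}$ yields exactly the bound $4(1+\|\vec{c}\|_1)D^{2/3}$ claimed in \eqref{PP3loc}.

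I do not expect any genuine obstacle here: the statement is a bookkeeping corollary of Lemma~\ref{LemmaPP1}, and the single nontrivial line is the elementary numerical inequality $\sqrt{2}(1/2)^{2/3}=2^{-1/6}<1$, which is what lets one collapse the bracket in the passage from \eqref{PP2loc} to \eqref{PP3loc}.
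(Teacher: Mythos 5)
Your proposal is correct and coincides with the paper's own argument: the authors likewise obtain \eqref{PP2loc} by applying Lemma~\ref{LemmaPP1} with $\psi=\sum_{j}\varphi_{c_j}(\cdot-z_j)$ and the bound $\|\psi\|_{L^\infty}+\|\psi'\|_{L^\infty}\le 2\|\vec{c}\|_1$. Your derivation of \eqref{PP3loc} via $\sqrt{2}\,D^{2/3}\le 2^{-1/6}<1$ when $D\le 1/2$ is the elementary step the paper leaves implicit, and it checks out.
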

\noindent


\subsection{Control of the distance between the peakons}
 In this subsection we want to prove that for a given $ \vec{c}$  satisfying \eqref{speed1} , there exists 
  $ \alpha=\alpha(\vec{c})$ and $ L=L(\vec{c}) $ such that as soon as the solution $ u(t) $ stays in 
   $ U(\alpha,L,\vec{c}) $  the different bumps of $ u $   that are individually close to a peakon or an antipeakon  get away  from each others as time is increasing.
 This is crucial in our analysis since we do not know how to manage strong interactions. 

\begin{lemma}\label{eloignement}(Decomposition of the solution around a sum of antipeakons and peakons).
Let $ u_0 \in  Y $ satisfying \eqref{huhu0}-\eqref{distz}. There exist $\alpha_0(\vec{c})>0$,
$L_0(\vec{c})>0$ and $\tilde{K}(\vec{c})\geq1$  such that for all  $0<L_0<L$ if for some $T>0$
\begin{equation}\label{hyplem}
u\in U(\alpha_0, L/2,\vec{c}) \quad \text{ on  $[0,T]$ }
\end{equation}
then there exist $ N_-+N_+ $  $C^1$-functions $ x_{-N_-}(\cdot)< ..<x_{-1}(\cdot)< x_1(\cdot)< ..<x_{N_+}(\cdot) $ defined on $[0,T] $ such that for all $ t\in  [0,T] $ we have,
\begin{equation}\label{orthocondition}
\int_{\R}\Big(v(t,x)-\sum_{j=-N_-\atop j\neq 0}^{N_+}\rho_{c_j}\big(x-x_j(t)\big)\Big)\partial_x\rho_{c_i}\big(x-x_i(t)\big)dx=0,\qquad \forall  i\in [[-N_- , N_+]],
\end{equation}
\begin{equation}\label{149}
\vert \dot{x}_i(t)-c_i\vert\leq \frac{\sigma(\vec{c})}{8},\qquad \forall i\in [[-N_- , N_+]]\backslash \{0\},
\end{equation}
and \begin{equation}
x_i(t)-x_{j}(t) \geq 3L/4, \quad \forall (i,j)\in ([-N_-,N_+]]\setminus\{0\})^2 , i> j ,
   \label{eloi}
\end{equation}
where $ v=(4-\partial_x^2)^{-1} u $ and $\rho_{c_i}=(4-\partial_x^2)^{-1}\varphi_{c_i} $. \\
Moreover,  if 
\begin{equation}\label{1488}
u\in U(\alpha, L/2,\vec{c}) \quad \text{ on  $[0,t_0]$ }
\end{equation}
for some $0<\alpha<\alpha_0(\vec{c}),$ then
\begin{equation}\label{147}
\left\Vert u(t,\cdot)-\sum_{i=-N_-\atop i\neq 0}^{N_+}\varphi_{c_i}(\cdot-x_i(t))\right\Vert_{\mathcal{H}}\leq \tilde{K}\alpha,
\end{equation}

\begin{equation}\label{148}
\left\Vert v(t,\cdot)-\sum_{i=-N_-\atop i\neq 0}^{N_+}\rho_{c_i}(\cdot-x_i(t))\right\Vert_{C^{1}(\R)}\leq \tilde{K}\alpha,
\end{equation}

\end{lemma}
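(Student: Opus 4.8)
The plan is to adapt the single–peakon modulation argument of Lemma \ref{modulation} to the multi-bump setting, the genuinely new ingredient being that the orthogonality system is now $(N_-+N_+)$–dimensional and must be inverted uniformly in the separation parameter $L$. First I would set up a vector-valued Implicit Function Theorem. Given an admissible reference configuration $\vec{z}=(z_i)$ with $z_i-z_j>L/2$ for $i>j$, I define, for $v$ in a small $H^2$–ball around $\sum_j \rho_{c_j}(\cdot-z_j)$ and $\vec{y}$ near $\vec 0$, the map with components
$$
\Phi_i(\vec{y},v)=\int_{\R}\Big(v-\sum_{j}\rho_{c_j}(\cdot-z_j-y_j)\Big)\,\rho_{c_i}'(\cdot-z_i-y_i)\,dx ,\qquad i\in[[-N_-,N_+]]\setminus\{0\},
$$
the sum running over nonzero indices. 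This map is $C^1$ and vanishes at the reference point $(\vec 0,\sum_j\rho_{c_j}(\cdot-z_j))$, and \eqref{orthocondition} is exactly $\Phi(\vec{y},v)=0$.

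The crucial computation is the Jacobian $\partial_{\vec y}\Phi$ at the reference point. As in \eqref{eded}, its diagonal entries equal $\|\rho_{c_i}'\|_{L^2}^2=c_i^2\,\tfrac{5}{54}>0$, while the off-diagonal entries are integrals of products $\rho_{c_i}'(\cdot-z_i)\rho_{c_k}'(\cdot-z_k)$ of derivatives centered at points more than $L/2$ apart, hence bounded by $C(\vec c)\,e^{-L/4}$ thanks to the exponential decay \eqref{1.88} of the smooth peakons. \emph{This diagonal dominance is the main obstacle}: for $L>L_0(\vec c)$ large enough the Jacobian is a small perturbation of a fixed invertible diagonal matrix (diagonal entries bounded below in terms of $\sigma(\vec c)$ since all $c_i\neq0$), so it is invertible with uniformly bounded inverse. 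The IFT then produces, for each admissible $\vec z$, a unique $C^1$ map $v\mapsto\vec y(v)$ with $|\vec y(v)|\lesssim \|v-\sum_j\rho_{c_j}(\cdot-z_j)\|_{H^2}$, the constants being independent of $\vec z$ by translation invariance. By uniqueness these local maps glue into a single $C^1$ map on $\bigcup_{\vec z}B_{H^2}(\sum_j\rho_{c_j}(\cdot-z_j),\tilde\eps_0)$, and setting $x_i(t)=z_i+y_i(v(t))$ gives the desired $C^1$ functions (the time regularity following, as in Lemma \ref{modulation}, from $v\in C^1(\R;H^2)$ via \eqref{vt}). The bound $|x_i-z_i|\lesssim\alpha$, the triangle inequality and the decay of $\rho$ then yield \eqref{147}, and \eqref{148} follows by applying $(4-\partial_x^2)^{-1}=\tfrac14 e^{-2|\cdot|}\ast\,\cdot$, which maps $\mathcal H$ continuously into $C^1(\R)$.

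For the speed control \eqref{149} I would differentiate \eqref{orthocondition} in time and substitute $v_t=-\tfrac12\partial_x(1-\partial_x^2)^{-1}u^2$, exactly as in \eqref{vb}--\eqref{nanaa2}. This produces a linear system for the quantities $\dot x_i-c_i$ whose matrix is again the near-diagonal matrix above, whose diagonal coefficients are $\approx\|\rho_{c_i}'\|_{L^2}^2$, and whose off-diagonal coefficients and source terms are $O(\alpha)+O(e^{-L/4})$ because each bump feels the others only through exponentially small tails and through the $O(\alpha)$ defect $u-\sum_j\varphi_{c_j}(\cdot-x_j)$. Inverting this system by the same argument and choosing $\alpha_0(\vec c)$ small and $L_0(\vec c)$ large gives $|\dot x_i-c_i|\le\sigma(\vec c)/8$.

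Finally, the separation \eqref{eloi} follows from a continuity/bootstrap argument using the ordering of the speeds. At $t=0$, \eqref{distz} together with $|x_i(0)-z_i^0|\lesssim\alpha$ give $x_i(0)-x_j(0)\ge L-C(\vec c)\alpha\ge 3L/4$ for $i>j$, upon taking $L_0$ large relative to $\alpha_0$. Since the consecutive gaps in $(c_i)_{i}$ (with $c_0=0$ inserted) are all at least $\sigma(\vec c)$, one has $c_i-c_j\ge\sigma(\vec c)$ for $i>j$, so \eqref{149} forces
$$
\frac{d}{dt}\big(x_i(t)-x_j(t)\big)=\dot x_i(t)-\dot x_j(t)\ge \sigma(\vec c)-\tfrac{\sigma(\vec c)}{4}=\tfrac34\,\sigma(\vec c)>0 .
$$
Hence the gaps are strictly increasing on $[0,T]$ and the bound $x_i(t)-x_j(t)\ge 3L/4$ is preserved throughout, which completes the scheme; the only delicate point, to be handled with care in the write-up, remains the uniform invertibility of the orthogonality matrix, i.e. the domination of the interaction integrals by the fixed diagonal entries, which is precisely where the separation $L>L_0(\vec c)$ and the exponential localization of the $\rho_{c_j}$ are used.
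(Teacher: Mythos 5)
Your proposal is correct and follows essentially the same route as the paper, which proves this lemma by a modulation argument adapting Lemma \ref{modulation} to the vector-valued setting (the paper itself only sketches this and refers to \cite{AK,dikalm} for details). Your write-up supplies exactly the expected ingredients — uniform invertibility of the near-diagonal orthogonality Jacobian for $L$ large, time-differentiation of the orthogonality conditions for the speed bound, and the $\dot x_i-\dot x_j\ge 3\sigma(\vec c)/4$ bootstrap for the separation — so it is a faithful (indeed more detailed) version of the intended proof.
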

\begin{proof} The strategy is to use a modulation argument to  construct $ N_-+N_+ $ $C^1$-functions
 $ t\mapsto x_i(t)$, $ i\in [[-N_-, N_+]] \backslash \{0\}$ on $ [0,T] $ satisfying the orthogonality
 conditions \eqref{orthocondition}. The proofs of  the above estimates are  direct adaptations of  similar estimates proved in Lemma \ref{modulation}. We refer to  \cite{AK,dikalm} for details. 
 \end{proof}


\subsection{Monotonicity property}\label{Sectmonotonie}
Thanks to the preceding lemma, for $\alpha_0> 0 $ small enough and $ L_0>0 $ large enough, one can construct  $C^1$-functions $ x_{-N_-}(\cdot)<...<x_{N_+}(\cdot)$ defined on $
[0,T] $ such that \eqref{147}, \eqref{148}, \eqref{149} are satisfied.
In this subsection we state the almost monotonicity of  functionals that correspond to  the part of the functional  $E(\cdot)-\lambda F(\cdot)$  at the right  of  a curve that travels slightly at the left of the $ i $th bump of $ u $. 
To control the growth of the mass of $ y(t) $ we will also need an almost monotonicity result on $E(\cdot)+\gamma M(\cdot) $ at the right of a curve that travels slightly at the left of  the smallest positive  bump of $u$. 
As in \cite{MMT}, we introduce the $ C^\infty $-function $ \Psi $ defined  on $ \R $ by 
\begin{equation}\label{defPsi}
\Psi(x) =\frac{2}{\pi} \arctan \Bigl( \exp(x/6)\Bigr) 
\end{equation}
\begin{figure}[!htb]
\vspace{-0.5cm}
\centering
\subfloat[
 Representative curve of $\Psi$]
{\includegraphics[width=7.5cm, height=6cm]{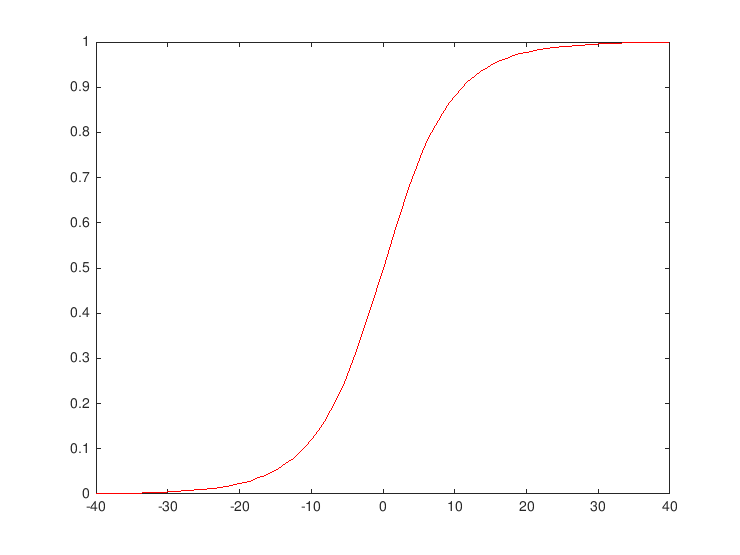}\label{psi}}
\subfloat[
 Representative curves of $\Psi'$,$\Psi''$,$\Psi'''$]
{\includegraphics[width=7.5cm, height=6cm]{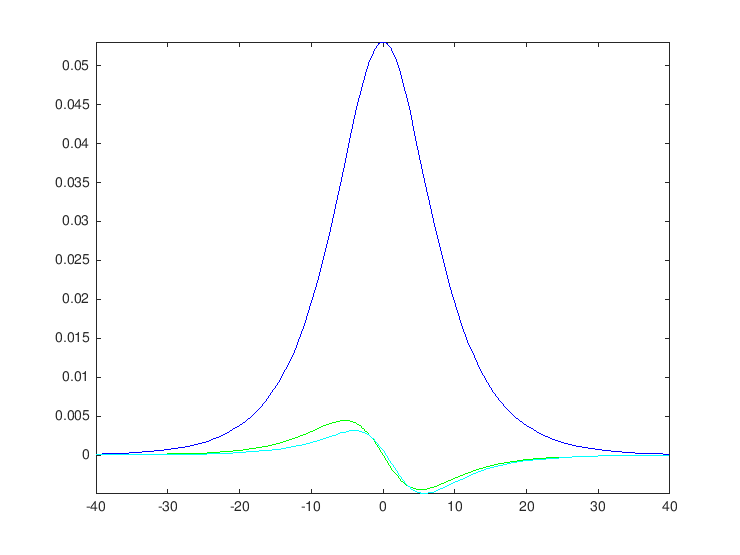}\label{psi'}}
\caption{Profiles of $\Psi$ and its derivatives.}
\end{figure}
It is easy to check that $ \Psi(-\cdot)=1-\Psi $ on $ \R $, $ \Psi' $ is a  positive even  function and that 
 there exists $C>0 $ such that $ \forall x\le 0 $, 
\begin{equation}\label{psipsi}
|\Psi(x)| + |\Psi'(x)|\le C \exp(x/6) \; .
\end{equation}
Moreover, by direct calculations (see fig \ref{psi'''psi'}), it is easy to check that 
\begin{equation}\label{psi3}
|\Psi^{'''}| \le  \frac{1}{2} \Psi' \;\text{on }\R ,
\end{equation}
\begin{figure}[!htb]
\vspace{-0.2cm}
\centering
{\includegraphics[width=8.5cm, height=6.5cm]{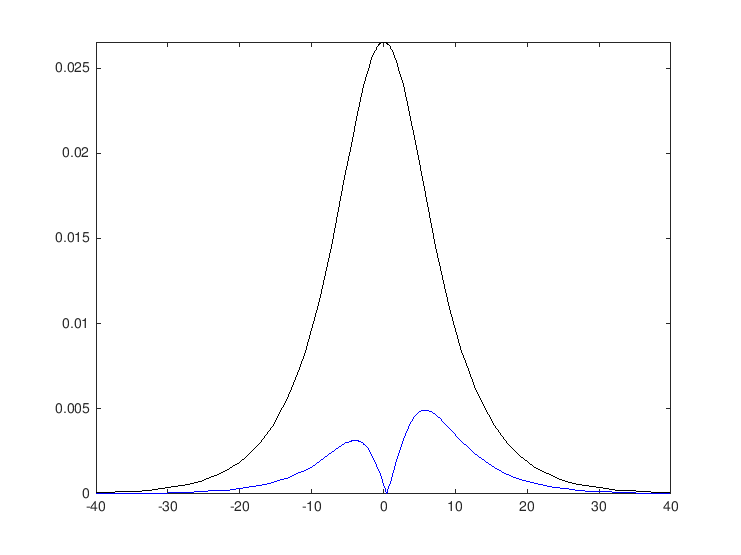}}
\caption{Profiles of $\vert \Psi''' \vert$ (blue) with respect to $\frac{1}{2}\Psi '$ (black).}
\label{psi'''psi'}
\end{figure}
and that 
\begin{equation} \label{po}
\Psi'(x)\ge \Psi'(2)= \frac{1}{3\pi} \frac{e^{1/3}}{1+e^{2/3}} , \quad \forall x\in [0,2] \; .
\end{equation}
 Setting $ \Psi_K=\Psi(\cdot/K) $, we introduce for $ j\in \{1,..,N_+\}
 $ and $ \lambda\ge 0 $,
 \begin{equation}\label{defjk}
 \mathcal{J}_{j,\lambda}(t)=\mathcal{J}_{j,\lambda,K}\big(t,u(t,x)\big)=  \int_{\R}\Bigl(  \big[4v^{2}(t,x)+5v^{2}_{x}(t,x)+v^{2}_{xx}(t,x)\big] -\lambda u^3(t,x)\Bigr) \Psi_{j,K}(t) \, dx\,,
 \end{equation}
 where $ \Psi_{j,K}(t,x)=\Psi_K(x-y_j(t)) $ with $ y_j(t)$,
 $ j=1,..,N_+ $, defined by
  \begin{equation}\label{defy1}
y_{1}(t)=x_{1}(0)+\frac{c_{1}}{2} t -\frac{L}{4},
  \end{equation}
 and
    \begin{equation}\label{defyi}
y_i(t)=\frac{ x_{i-1}(t)+ x_i(t)}{2},\quad
  i=2,..,N_+.
  \end{equation} 
   \begin{proposition}\label{monotonicitylem}(Almost monotony  of the functional energy $\mathcal{J}_{i,\lambda,K}$)
  Let $ T>0 $ and $ u\in C(\R_+;H^1) $ be the  solution of the DP equation emanating from $ u_0\in Y $, satisfying Hypothesis \ref{hyp} with \eqref{huhu0}-\eqref{huhu}  on $[0,T] $. There exist $ \alpha_0(\vec{c})>0 $ and $ L_0(\vec{c})>0 $  such that
    if $0<\alpha<\alpha_0(\vec{c})\ll1$ and $ L\ge L_0>0 $ then for any $ 1\le K \lesssim \sqrt{L}$
     and $ 0\le \lambda\le  \frac{1}{2c_1}$,
    \begin{equation}\label{monotonicityestim}
    \mathcal{J}_{j,\lambda,K}(t)-\mathcal{J}_{j,\lambda,K}(0)\le O( e^{-\frac{ L }{48K}}) ,
    \quad \forall j\in\{1,...,N_+\}, \; \quad \forall t\in [0,T] \; .
    \end{equation}
\end{proposition}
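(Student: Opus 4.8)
The plan is to adapt the monotonicity machinery of \cite{MMT} (as implemented for peakon trains in \cite{MR2542735,dikalm}) to the DP functionals. By the approximation and continuity argument of Proposition \ref{weakGWP} it suffices to treat smooth solutions, for which all the manipulations below are licit. Writing $\mathcal{E}(v)=4v^2+5v_x^2+v_{xx}^2$ and $h=(1-\partial_x^2)^{-1}u^2=p\ast u^2\ge0$, I would first put $\partial_t\mathcal{E}(v)$ and $\partial_t(u^3)$ in divergence form. Since $v_t=-\frac12\partial_x h$ by \eqref{vt} and $u^2=h-h_{xx}$, a direct computation gives the local conservation laws $\partial_t(u^3)+\partial_x G_F=0$ with $G_F=\frac34u^4+\frac94(h^2-h_x^2)$, and $\partial_t\mathcal{E}(v)+\partial_x G_E=0$ with $G_E=4vh+v_xh_x+G_E^{\mathrm{loc}}$, where $G_E^{\mathrm{loc}}$ is a polynomial in $v,v_x,v_{xx},v_{xxx}$ (equivalently in $u$ and $v$) and $4vh+v_xh_x$ is the only genuinely nonlocal contribution. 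Differentiating $\mathcal{J}_{j,\lambda,K}$, using $\partial_t\Psi_K(\cdot-y_j)=-\dot{y}_j\Psi_K'(\cdot-y_j)$ and integrating by parts then yields
\begin{equation*}
\frac{d}{dt}\mathcal{J}_{j,\lambda,K}(t)=\int_{\R}\Bigl\{G_E-\lambda G_F-\dot{y}_j(t)\bigl[\mathcal{E}(v)-\lambda u^3\bigr]\Bigr\}\,\Psi_K'\big(x-y_j(t)\big)\,dx .
\end{equation*}

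The whole point is that the principal term $-\dot{y}_j[\mathcal{E}(v)-\lambda u^3]\Psi_K'$ is a coercive sink. Indeed $\Psi_K'\ge0$; the velocity estimate \eqref{149} together with \eqref{defy1}--\eqref{defyi} give $\dot{y}_j\ge c_1/2-\sigma(\vec{c})/8>0$; and on the transition zone $\{|x-y_j|\le L/8\}$, which by the separation \eqref{eloi} lies in a valley at distance $\gtrsim L/4$ from every bump, the modulation bounds \eqref{147}--\eqref{148} and Lemma \ref{LemmaPP1loc} force $|u|,|v|,|v_x|,|v_{xx}|$ and $h$ to be $O(\alpha^{2/3}+e^{-cL})$. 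Since $u^2\lesssim\mathcal{E}(v)$ pointwise and $\lambda\le\frac1{2c_1}$, this gives $\mathcal{E}(v)-\lambda u^3\ge\frac12\mathcal{E}(v)\ge0$ there, so the principal term is $\le-\frac{c_1}{8}\mathcal{E}(v)\Psi_K'$ on the transition zone.

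It then remains to dominate the flux $G_E-\lambda G_F$. On the far region $\{|x-y_j|\ge L/8\}$ I would use the exponential decay $\Psi_K'(s)\lesssim\frac1K e^{-|s|/(6K)}$ coming from \eqref{psipsi} and the evenness of $\Psi'$, which gives $\Psi_K'\lesssim\frac1K e^{-L/(48K)}$ there, while the conservation of $E$ and $F$ together with the uniform $L^\infty$ bound of Lemma \ref{LemmaPP0} control the total mass of $G_E,G_F,\mathcal{E}(v),u^3$; this produces a contribution $O(e^{-L/(48K)})$. On the transition zone the local flux $G_E^{\mathrm{loc}}-\lambda G_F$ is pointwise bounded by $(\text{small factor})\times\mathcal{E}(v)$ thanks to the smallness of $u,v$ recorded above, and is therefore absorbed into the sink $-\frac{c_1}{8}\mathcal{E}(v)\Psi_K'$ for $\alpha$ small; the terms carrying extra derivatives of $\Psi_K$ are handled with $|\Psi'''|\le\frac12\Psi'$ from \eqref{psi3} and $1\le K\lesssim\sqrt L$.

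The main difficulty, and the step I expect to require the most care, is the genuinely nonlocal flux $4vh+v_xh_x$ (and the $h^2-h_x^2$ piece of $G_F$), since $h=p\ast u^2$ need not be small merely because the local value of $u$ is. The remedy is to split $u^2=u^2\mathbf 1_{\mathrm{bumps}}+u^2\mathbf 1_{\mathrm{valleys}}$: the bump part reaches the transition zone only through the exponential tail of $p$ over a distance $\gtrsim L/4$, hence contributes $O(e^{-cL})$, while the valley part makes $h=O(\alpha^{4/3})$ there, so that $|4vh+v_xh_x|$ is of higher order in $\alpha$ and is again absorbed into the coercive term for $\alpha$ small. Collecting these estimates gives $\frac{d}{dt}\mathcal{J}_{j,\lambda,K}\le O\big(\frac1K e^{-\mathrm{dist}(y_j,\,\mathrm{bumps})/(6K)}\big)$ up to a non-positive remainder; since \eqref{149} forces $y_j$ to drift away from every bump as time grows, this bound is integrable in time, and integrating over $[0,t]$ yields \eqref{monotonicityestim}.
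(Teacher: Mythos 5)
Your overall architecture is the paper's: reduce to smooth solutions, use the virial identities of Lemma \ref{lemvir} with $g=\Psi_{j,K}(\cdot-y_j(t))$, extract the coercive transport term $-\dot y_j(t)\int(4v^2+5v_x^2+v_{xx}^2)\Psi_{j,K}'$ with $\dot y_j\gtrsim c_1$, split $\R$ into the valley $D_j$ (where $|u|,|v|,|v_x|$ are $O(\sqrt\alpha)+O(e^{-L/8})$) and its complement (where $\Psi_{j,K}'\lesssim K^{-1}e^{-(\sigma(\vec c)t+L/8)/(6K)}$), and integrate in time using the linear drift of $y_j$ away from the bumps. The local terms and the $\lambda$-terms are handled as in the paper (note that the paper gets $F_4+F_5\le 0$ for free from $\lambda\ge0$, $\Psi_{j,K}'\ge0$ and $|h_x|\le h$, so the $h^2-h_x^2$ piece you worry about needs no estimate at all).

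The genuine gap is your treatment of the nonlocal flux $5vh+v_xh_x$ on the transition zone. You bound $h=p\ast u^2$ \emph{pointwise} there by $O(\alpha^{4/3})+O(e^{-cL})$ via the bump/valley splitting of $u^2$. This yields only an \emph{additive} error $O(\alpha^{2})\|\Psi_{j,K}'\|_{L^1}=O(\alpha^{2})$ in $\frac{d}{dt}\mathcal{J}_{j,\lambda,K}$, which does not decay in $t$ and cannot be absorbed into $-c\int(4v^2+5v_x^2+v_{xx}^2)\Psi_{j,K}'$, since nothing bounds that integral from below by a multiple of $\alpha^2$. Integrated over $[0,T]$ with $T$ arbitrary this gives $O(\alpha^2 T)$, not $O(e^{-L/(48K)})$. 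The missing device — the paper's \eqref{174} — is to transfer the convolution onto the weight: \eqref{psi3} gives $(1-\partial_x^2)\Psi_{j,K}'\ge(1-\tfrac1{2K^2})\Psi_{j,K}'$, hence $(1-\partial_x^2)^{-1}\Psi_{j,K}'\le(1-\tfrac1{2K^2})^{-1}\Psi_{j,K}'$, so that
\begin{equation*}
\int_{\R} h\,\Psi_{j,K}'=\int_{\R} u^2\,(1-\partial_x^2)^{-1}\Psi_{j,K}'\le\big(1-\tfrac1{2K^2}\big)^{-1}\int_{\R} u^2\,\Psi_{j,K}'\le 5\big(1-\tfrac1{2K^2}\big)^{-1}\int_{\R}(4v^2+5v_x^2+v_{xx}^2)\Psi_{j,K}'\,,
\end{equation*}
making the nonlocal contribution \emph{multiplicative}, of the form $\|v\|_{C^1(D_j)}\int(4v^2+5v_x^2+v_{xx}^2)\Psi_{j,K}'$, and hence absorbable for $\alpha$ small. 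You do invoke $|\Psi'''|\le\frac12\Psi'$, but only for commutator terms carrying extra derivatives of $\Psi_K$; its essential role in this proof is precisely the inequality above.
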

The proof of this proposition relies on the following virial type identities that are proven in the appendix.
\begin{lemma}\label{lemvir}(Viral type identity). Let $u\in C(\R_+; H^\infty(\R))$ be a solution of equation \eqref{DP}. For any smooth function $g\colon\R\mapsto\R$, it holds
\begin{multline}\label{163}
\frac{d}{dt}\int_{\R}(4v^2+5v_x^2+v_{xx}^2)(t,x)gdx=\frac{2}{3}\int_{\R}u^3(t,x)g'dx-4\int_{\R}u^2(t,x)v(t,x)g'dx\\
+5 \int_{\R} v(t,x)h(t,x) g'dx  +\int_{\R} v_x(t) h_x(t,x) g' dx,
\end{multline}
\begin{equation}\label{164}
\frac{d}{dt}\int_{\R}u^3(t,x)gdx=\frac{3}{4}\int_{\R}u^4(t,x)g'dx+\frac{9}{4}\int_{\R}(h^2-h_x^2)(t,x)g'dx,
\end{equation}
and 
 \begin{equation}
 \frac{d}{dt}\int_{\R}  y g \, dx =  \int_{\R} y u g' \, dx +\frac{3}{2} \int_{\R} (u^2-u_x^2) g'  \, dx \label{go2}
 \end{equation}

where $y=(1-\partial_x^2)u$, $v=(4-\partial_x^2)^{-1}u$, and $h=(1-\partial_x^2)^{-1}u^2$.
\end{lemma}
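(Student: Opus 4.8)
The plan is to prove each of the three identities by differentiating the weighted integral in time, substituting the relevant evolution equation, and then integrating by parts so as to transfer every spatial derivative onto the weight $g$. Throughout I work with the smooth solution $u\in C(\R_+;H^\infty(\R))$, so that $u$ and all its $x$-derivatives are Schwartz in space; taking $g$ and its derivatives bounded (as is the case for the relevant choice $g=\Psi_{j,K}$), every boundary term produced by integration by parts vanishes and all manipulations are legitimate. The two algebraic relations that do most of the work are $u=4v-v_{xx}$ (from $v=(4-\partial_x^2)^{-1}u$) and $u^2=h-h_{xx}$ (from $h=(1-\partial_x^2)^{-1}u^2$); these are used to eliminate the top-order derivatives that arise.

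I would begin with \eqref{164}, which is the cleanest. Writing $\frac{d}{dt}\int u^3 g=3\int u^2 u_t\,g$ and replacing $u_t$ by $-\tfrac12\partial_x(u^2)-\tfrac32 h_x$ (from \eqref{DP}, using $(1-\partial_x^2)^{-1}\partial_x(u^2)=h_x$) splits the integral into $-\tfrac32\int u^2\partial_x(u^2)\,g$ and $-\tfrac92\int u^2 h_x\,g$. The first is an exact derivative, $-\tfrac34\int\partial_x(u^4)\,g=\tfrac34\int u^4 g'$. For the second I substitute $u^2=h-h_{xx}$ and integrate by parts, using $\int hh_x g=-\tfrac12\int h^2 g'$ and $\int h_{xx}h_x g=-\tfrac12\int h_x^2 g'$, which produces $\tfrac94\int(h^2-h_x^2)g'$. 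This matches the claim. For \eqref{go2} I would use the transport form \eqref{DP2}, $y_t=-uy_x-3u_xy$: integrating the $uy_x$ term by parts gives $\frac{d}{dt}\int yg=\int yu\,g'-2\int u_x y\,g$, and then expanding $y=u-u_{xx}$ turns $\int u_x y\,g$ into $-\tfrac12\int(u^2-u_x^2)g'$, so that the weighted mass evolves through a pure $g'$ flux; applying $(1-\partial_x^2)$ directly to the $u$-equation gives the same answer and serves as a useful cross-check.

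The identity \eqref{163} is the heart of the matter. Here I would differentiate the $E$-density and use $v_t=-\tfrac12 h_x$ (equation \eqref{vt}, obtained from \eqref{DP} via the operator splitting \eqref{zq}) to reduce the left-hand side to $\int\bigl(-4vh_x-5v_x h_{xx}-v_{xx}h_{xxx}\bigr)g$. The strategy is then to kill the highest-order derivatives by substituting $v_{xx}=4v-u$, $h_{xx}=h-u^2$ and $h_{xxx}=h_x-2uu_x$, after which repeated integration by parts moves the remaining derivatives onto $g$. One sees that $-2\int u^2u_x g$ collapses to $\tfrac23\int u^3 g'$, that $8\int v u u_x g$ yields the $-4\int u^2 v\,g'$ contribution after integrating by parts, that $\int v_{xx}h_x g$ integrated by parts produces the $\int v_x h_x g'$ term, and that the $vh_x$-type terms produce the $\int vh\,g'$ contribution.

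The main obstacle, and the step demanding the most care, is the bookkeeping in \eqref{163}. The substitutions and integrations by parts generate a cloud of auxiliary terms, typically integrals such as $\int hv_x g$ and $\int u^2 v_x g$, as well as intermediate terms carrying $g''$ or $g'''$, and one must verify that these cancel \emph{exactly} against one another so that only the four advertised $g'$-integrals survive. I would organize the verification by collecting like terms, treating all $\int hv_x g$ contributions together, all $\int u^2 v_x g$ contributions together, and so on, and checking that each such group sums to zero. Once this cancellation is confirmed, the identity \eqref{163} follows, completing the proof of the lemma.
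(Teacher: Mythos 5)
Your overall strategy is the same as the paper's: differentiate the weighted density, substitute the relevant evolution law ($v_t=-\tfrac12 h_x$ for \eqref{163}, the $u$-equation for \eqref{164}, the transport equation for $y$ for \eqref{go2}), eliminate top-order derivatives via $v_{xx}=4v-u$, $h_{xx}=h-u^2$, $h_{xxx}=h_x-2uu_x$, and integrate by parts onto $g$. Your treatment of \eqref{164} is complete and coincides with the paper's line by line. But there are two genuine problems. For \eqref{163} you stop exactly where the proof lives: you assert that the auxiliary terms ``cancel exactly'' and that you ``would organize the verification,'' but you never perform it, and this bookkeeping is the entire content of the identity. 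Moreover, when one actually carries it out (each integration by parts need only be done once, so no $g''$ or $g'''$ terms ever appear), the $\int u^2v_xg$ and $\int vh_xg$ groups do cancel, but a residual combination $\int vh_xg+\int v_xhg=-\int vhg'$ survives and lowers the coefficient of $\int vhg'$ from $5$ to $4$. So the computation you outline, once completed, does not return the display \eqref{163} as printed; you must either exhibit the step that restores the coefficient $5$ or record that the computation yields $4$ (the paper's own gathering of its terms $A_1,A_2$ exhibits the same residual pair $\int vh_xg+\int v_xhg$, and the discrepancy is harmless where \eqref{163} is used, since only upper bounds enter there).

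The same issue appears, more visibly, in your treatment of \eqref{go2}. Your integration by parts is correct: $-\int uy_xg=\int yu_xg+\int yug'$ gives $\frac{d}{dt}\int yg=\int yug'-2\int u_xyg$, and $-2\int u_xyg=\int(u^2-u_x^2)g'$, so you obtain the flux constant $1$, whereas \eqref{go2} asserts $\tfrac32$. (The paper's derivation starts from $y_t=-\partial_x(yu)-3yu_x$, which is not equivalent to $y_t=-uy_x-3u_xy$; the cross term $u_xy$ changes the count from $-3$ to $-2$, and your constant $1$ is the one the transport equation actually produces.) As written, your proposal claims to have proved \eqref{go2} while deriving a different identity; at minimum you must flag the mismatch and check that the downstream use in Lemma \ref{lemmo}, which only needs an upper bound on this flux, survives with the corrected constant. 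A last small point: $u(t)\in H^\infty(\R)$ does not make $u$ Schwartz; what legitimizes the integrations by parts is that $u$ and all its spatial derivatives lie in $L^2\cap L^\infty$ and vanish at infinity while $g$ and its derivatives are bounded.
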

\noindent
{\bf Proof of Proposition \ref{monotonicitylem}}
We first note that combining \eqref{defyi} and \eqref{149}, it holds for $j=1,...,N_+$,
\begin{equation}\label{166}
\frac{3}{2}c_{N_+}\geq \dot{y}_j(t)\geq\frac{c_{1}}{2}.
\end{equation}
Now,  using \eqref{defjk}, \eqref{163} and \eqref{164} with $g=\Psi_{j,\lambda,K}(\cdot-y_j(t))$, $j\geq 1$, one gets
\begin{align}
\frac{d}{dt}\mathcal{J}_{j,\lambda,K}(t)&=-\dot{y}_j(t)\int_{\R}(4v^2+5v_x^2+v_{xx}^2)\Psi_{j,K}'(x-y_j(t))dx+\int_{\R}
(\frac{2}{3} u-4v) u^2 \Psi_{j,K}'dx\\
&+\int_{\R}(5vh+v_x h_x)\Psi_{j,K}'dx+\lambda\dot{y}_j(t)\int_{\R}u^3\Psi_{j,K}'dx
\nonumber\\
&-\frac{3}{4}\lambda\int_{\R}u^4\Psi_{j,K}'dx-\frac{9}{4}\lambda\int_{\R}(h^2-h_x^2)\Psi_{j,K}'dx \nonumber\\
&=-\dot{y}_j(t)\int_{\R}(4v^2+5v_x^2+v_{xx}^2)\Psi_{j,K}'(x-y_j(t))dx+F_1+F_2+...+F_{5}.\label{166b}
\end{align}
We claim that for $k=1,2,3$, it holds
\begin{equation}\label{168}
F_k\leq\frac{c_{1}}{10}\int_{\R}(4v^2+5v_x^2+v_{xx}^2)\Psi_{j,K}'(x-y_j(t))dx+\frac{C}{K}e^{-\frac{1}{6 K}(\sigma(\vec{c})t+L/8)}.
\end{equation}
For all $ t\in [0,T] $ and each $ j \in  [[1,N_+]]$  divide $ \R $ into two regions $ D_j=D_j(t) $ and $ D_j^c $ with
 $$
 D_j(t)=[ x_{j-1}(t) +L/4, x_j(t) -L/4] \quad \text{for} \quad  j\in  [[2,N_+]]\quad \text{and} \quad 
 D_1(t)=[x_1(0)-L/2,x_1(t)-L/4] \; .
 $$
 First, in view of  \eqref{149} and \eqref{defy1}-\eqref{defyi}, one can check that for $ x\in D_j^c(t)  $, we have
 \begin{equation}\label{169}
 |x-y_j(t)| \geq\sigma(\vec{c})\, t+L/8 ,
 \end{equation}
 with $ \sigma(\vec{c}) $ defined in \eqref{speed2}.
 Indeed, for $ j\in [[2,N_+]] $ it holds 
 $$ 
  |x-y_j(t)| \ge \frac{ x_{j}(t)- x_{j-1}(t)}{2}-L/4 \ge \frac{c_j-c_{j-1}}{4} \, t +L/8\geq\sigma(\vec{c})t+L/8 .
  $$
  and for $ j= 1$, 
  $$
   |x-y_1(t)| \ge \frac{c_1}{4} t + L/4 \geq\sigma(\vec{c})t+L/8 .
  $$
 Second, noticing  that 
 \begin{equation}\label{170}
 u^2=(4v-v_{xx})^2\leq20v^2+5v_{xx}^2\leq5(4v^2+5v_{xx}^2+v_{xx}^2),
 \end{equation}
 and proceeding as for the estimate \eqref{PP2} with the help of \eqref{147}-\eqref{148} and the exponential decay of $\varphi_{c{_j}}$ on $D_j$, it holds
    \begin{eqnarray}
 \|v(t,\cdot)\|_{C^1(D_j)}+ \|u(t,\cdot)\|_{L^\infty(D_j)}& \lesssim &  \sum_{j=-N_-\atop j\neq 0}^{N_+} \|\varphi_{c_j}(\cdot -{ x_j(t))}\|_{L^\infty(D_j)}
   + \|u-\sum_{j=-N_-\atop j\neq 0}^{N_+}\varphi_{c_j}(\cdot -{ x_j(t)})\|_{L^\infty(D_j)}\nonumber\\
  & + &\sum_{j=-N_-\atop j\neq 0}^{N_+} \|\rho_{c_j}(\cdot -{ x_j(t))}\|_{L^\infty(D_j)}
   + \|v-\sum_{j=-N_-\atop j\neq 0}^{N_+}\rho_{c_j}(\cdot -{ x_j(t)})\|_{L^\infty(D_j)} \nonumber \\
    & \le & O(e^{-L/8}) +O(\sqrt{\alpha}) \; .\label{171}
    \end{eqnarray}
    
Now to  estimate $F_1$, we note that combining  \eqref{169}-\eqref{171} and the exponential decay of $\Psi_{j,K}'$ on $D_j^c$, we get
 \begin{align*}
 F_1&\leq 4(\Vert u\Vert_{L^{\infty}(D_j)}+\Vert v\Vert_{L^{\infty}(D_j)}) \int_{\R}u^2\Psi_{j,K}'dx+
  4(\Vert u\Vert_{L^{\infty}(D_j^c)}+\Vert v\Vert_{L^{\infty}(D_j^c)}) \Vert \Psi_{j,K}'\Vert_{L^{\infty}(D^c_j)}\Vert u\Vert_{L^2(\R)}^2\\
 &\leq 20(\Vert u\Vert_{L^{\infty}(D_j)}+\Vert v\Vert_{L^{\infty}(D_j)})\int_{\R}(4v^2+5v_x^2+v_{xx}^2)\Psi_{j,K}'dx+\frac{20}{K}\Vert u_0\Vert_{\mathcal{H}}^3e^{-\frac{1}{6K}(\sigma(\vec{c}) t+L/8)},
 \end{align*}
 where we used   \eqref{u0uinf} and that, thanks to \eqref{EE} and \eqref{1.5}, 
 $$
 \|v\|_{L^\infty(\R)} \le \frac{1}{\sqrt{2}} \| v\|_{H^1} \le \frac{1}{2\sqrt{2}} \|u\|_{\H} =\frac{1}{2\sqrt{2}}\|u_0\|_{\H} \; .
 $$
 Therefore, for $0<\alpha<\alpha_0(\vec{c})\ll1$ small enough and $L>L_0>0$ large enough, it holds
 $$
 F_1\leq \frac{c_{1}}{10}\int_{\R}(4v^2+5v_x^2+v_{xx}^2)\Psi_{j,K}'(x-y_j(t))dx+\frac{C}{K}e^{-\frac{1}{6K}(\sigma(\vec{c})t+L/8)}.\\
$$
  Let us now tackle the estimate of  $F_2$. We first remark that from the definition of $ \Psi $ in Section \ref{Sectmonotonie}, and in particular \eqref{psi3}, we have  for $ K\geq 1 $,
  \begin{equation}\label{174}
(1-\partial_x^2) \Psi_{j,K}' \geq (1-\frac{1}{2 K^2}) \Psi_{j,K}' \Rightarrow
(1-\partial_x^2)^{-1} \Psi_{j,K}'\leq \big(1-\frac{1}{2 K^2}\big)^{-1} \Psi_{j,K}'
  \end{equation}
  and, by Young's convolution estimates and \eqref{1.5},
\begin{equation}\label{174b}
\|h\|_{L^2} \le \frac{1}{2} \|e^{-|\cdot|}\|_{L^2} \|u^2\|_{L^1}\le \frac{1}{2} \|u\|_{L^2}^2 \le 2 \|u\|_{\H}^2 \; .
\end{equation}   We also notice that 
    \begin{eqnarray*}
 h(x)&  = & \frac{1}{2} e^{-x} \int_{-\infty}^x e^{x'}  u^2(x') \, dx' +\frac{1}{2} e^{x} \int_{-\infty}^x e^{-x'}  u^2(x') \, dx',
 \end{eqnarray*}
 and
 $$
 h_x(x)=-\frac{1}{2} e^{-x} \int_{-\infty}^x e^{x'}  u^2(x') \, dx' +\frac{1}{2} e^{x} \int_{-\infty}^x e^{-x'}  u^2(x') \, dx',
 $$
  so  that 
\begin{equation}\label{176}
\vert h_x(x)\vert\leq h(x) \quad\forall x\in\R.
\end{equation}
and thus 
\begin{equation}\label{177}
F_2 \le \int_{\R} (5v+|v_x|) h \Psi_{j,K}' \quad .
\end{equation}
Therefore, according to \eqref{170}-\eqref{171} and \eqref{174}-\eqref{177}, we have
\begin{align*}
F_2&\leq 6 \Vert v\Vert_{C^1(D_j)}\int_{\R}h\Psi_{j,K}'dx+\Vert \Psi_{j,K}'\Vert_{L^{\infty}(D_j^c)}
\|h\|_{L^2} (5\|v\|_{L^2}+\|v_x\|_{L^2})\\
&\leq 6 \Vert v\Vert_{C^1(D_j)}\int_{\R}u^2(1-\partial_x^2)^{-1}\Psi_{j,K}'dx+
2  \Vert \Psi_{j,K}'\Vert_{L^{\infty}(D_j^c)}  \|u\|_{\H}^2 (5\|v\|_{L^2}+\|v_x\|_{L^2})\\
&\leq 30  \Vert v\Vert_{C^1(D_j)}\big(1-\frac{1}{2K^2}\big)^{-1} \int_{\R}(4v^2+5v_x^2+v_{xx}^2)\Psi_{j,K}'dx+6\Vert \Psi_{j,K}'\Vert_{L^{\infty}(D_j^c)}
\|u\|_{\H}^3 \; .
\end{align*}
Using \eqref{1.5} and the exponential decay of of $\Psi_{j,K}'$ on $D_j^c$ with \eqref{169}, we thus get  
$$
F_2\leq 30\big(1-\frac{1}{2K^2}\big)^{-1} \Vert v\Vert_{C^{1}(D_j)}\int_{\R}(4v^2+5v_x^2+v_{xx}^2)\Psi_{j,K}'dx+6\Vert u_0\Vert_{\mathcal{H}}^3e^{-\frac{1}{6K}(\sigma(\vec{c})+L/8)},
$$
so that by \eqref{177} $F_2$ satisfies \eqref{168}  for $0<\alpha<\alpha_0(\vec{c})\ll1$ small enough and $ L>L_0\gg1$ large enough.\\

 To estimate $F_{3}$, remark that using \eqref{166} and \eqref{170} one may write
 \begin{align*}
 F_{3}\leq \frac{15}{2}\lambda c_{N_+}\Vert u\Vert_{L^{\infty}(D_j)}\int_{\R}(4v^2+5v_x^2+v_{xx}^2)\Psi_{j,K}'dx+\frac{3}{2}\lambda c_{N_+}\Vert\Psi_{j,K}'\Vert_{L^{\infty}(D_j^c)}\Vert u\Vert_{L^{\infty}(\R)}\Vert u\Vert_{L^2(\R)}^2 \; .
 \end{align*}
 Using that, by hypothesis $ 0\le \lambda\le (2c_1)^{-1} $, the exponential decay of $\Psi_{j,K}'$ on $D_j^c$, \eqref{1.5} and \eqref{u0uinf},  we deduce that $F_{3}$ satisfies \eqref{168} for $0<\alpha<\alpha_0(\vec{c})\ll1$ small enough and $ L>L_0(\vec{c})\gg1$ large enough. \\

Finally,  $ \Psi_{j,K}'\geq 0$, $ \lambda \geq 0 $ and \eqref{176} ensure that $F_{4}+F_{5}$ is non positive. Gathering \eqref{166}-\eqref{168}
 we thus infer that 
$$
\frac{d}{dt}\mathcal{J}_{j,\lambda,K}(t)\leq \frac{C}{K}\Vert u_0\Vert_{\mathcal{H}}^3e^{-\frac{1}{6K}(\sigma(\vec{c})t+L/8)}.
$$
Integrating this inequality between $ 0 $ and $ t $, \eqref{monotonicityestim} follows and this proves the proposition for smooth initial solutions. Finally, approximating the initial data as in \eqref{app}, the strong continuity result with respect to initial data \eqref{cont2}  in Proposition \ref{weakGWP} ensures that \eqref{monotonicityestim} also hold for 
 $ u_0\in Y $ satisfying Hypothesis \ref{hyp}.
\hfill $\square$ \vspace*{2mm} \\
We will also need the following monotonicity result on $ E+\gamma M $ at the right of the curve $ y_1(\cdot) $.
We introduce the function $ \phi $ defined by 
\begin{equation}\label{defPhi}
\Phi(x) =\left\{ \begin{array}{rcl}
0 &\text{for}& x\le 0\\
x/2  &\text{for}& x\in [0,2] \\
1&\text{for}& x\ge 2
\end{array}
\right.
\end{equation}

\begin{lemma} \label{lemmo} Let $u\in C([0,T]; H^{\infty} \cap L^\infty(0,T; Y) $ be the solution to \eqref{DP} satisfying Hypothesis \ref{hyp} and \eqref{hyplem} for some $ L>0 $. Assume moreover that $u$ satisfies
\begin{equation}\label{ttt}
x_0(t) \le x_1(0)-L/4 +\frac{c_1 t}{2} \; , \quad \forall t\in [0,T],
\end{equation}
where $ x_1(\cdot) $ is defined in Lemma \ref{eloignement}. There exists $ L_0=L_0(\vec{c})  >0 $ such that if $ L\ge L_0  $ then on $ [0,T] $, it holds 
\begin{equation}\label{154}
\int_{\R} \Bigl( 4v^2+5v_x^2+v_{xx}^2\Bigr)(t) \Psi(\cdot -y_1(t)) +\frac{c_1}{2^9} y \Phi(\cdot -y_1(t)) \le \int_{\R} \Bigl( 4v_0^2+5v_{0,x}^2+v_{0,xx}^2\Bigr) \Psi(\cdot -y_1(0)) + \frac{c_1}{2^9}y(0)\Phi(\cdot -y_1(0))+ O(e^{-\frac{L}{48}})
\end{equation}
where $ y_1(\cdot) $ is defined in \eqref{defy1} and $ \Psi $ is defined in \eqref{defPsi}.
\end{lemma}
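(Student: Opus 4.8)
The plan is to differentiate in time the localized functional
\[
\mathcal{I}(t):=\int_{\R}\bigl(4v^2+5v_x^2+v_{xx}^2\bigr)(t)\,\Psi(\cdot-y_1(t))+\frac{c_1}{2^9}\int_{\R}y(t)\,\Phi(\cdot-y_1(t))=:\mathcal{E}(t)+\frac{c_1}{2^9}\mathcal{M}(t),
\]
and to show that $\frac{d}{dt}\mathcal{I}(t)\le Ce^{-\frac{1}{6}(\sigma(\vec{c})t+L/8)}$, whence \eqref{154} follows by integration on $[0,t]$ since $\int_0^{+\infty}e^{-\sigma(\vec{c})s/6}\,ds<+\infty$ and $e^{-L/48}=e^{-\frac{1}{6}\cdot\frac{L}{8}}$. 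I observe first that $\mathcal{E}(t)$ is exactly the functional $\mathcal{J}_{1,0,K}$ of \eqref{defjk} with $\lambda=0$ and $K=1$, and that $y_1(\cdot)$ being affine forces $\dot y_1\equiv c_1/2$. The computation carried out in the proof of Proposition \ref{monotonicitylem} (with $j=1$, $\lambda=0$, $K=1$) then yields not merely the almost monotonicity of $\mathcal{E}$ but the sharper differential inequality
\[
\frac{d}{dt}\mathcal{E}(t)\le -\frac{3c_1}{10}\int_{\R}\bigl(4v^2+5v_x^2+v_{xx}^2\bigr)\Psi'(\cdot-y_1(t))+Ce^{-\frac{1}{6}(\sigma(\vec{c})t+L/8)},
\]
in which the residual dissipative term comes from $-\dot y_1=-c_1/2$ after absorbing the flux terms $F_1,F_2$, each bounded by $\frac{c_1}{10}\int(\dots)\Psi'$ in \eqref{168}. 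I shall keep this residual term in order to control the momentum contribution.

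For the momentum part I apply the virial identity \eqref{go2} with the frozen weight $g=\Phi(\cdot-y_1(t))$ (this is legitimate since \eqref{go2} only involves $g'$, hence applies to the Lipschitz weight $\Phi$) and add the drift produced by $\partial_t\Phi(\cdot-y_1(t))=-\dot y_1\Phi'(\cdot-y_1(t))$, which gives
\[
\frac{d}{dt}\mathcal{M}(t)=\int_{\R}(u-\dot y_1)\,y\,\Phi'(\cdot-y_1(t))+\frac{3}{2}\int_{\R}(u^2-u_x^2)\,\Phi'(\cdot-y_1(t)).
\]
The support of $\Phi'(\cdot-y_1(t))$ is the compact interval $[y_1(t),y_1(t)+2]$, and I first check that it stays in a valley far from every bump: from \eqref{149} and \eqref{eloi} one gets $x_1(t)-y_1(t)\ge (c_1/2-\sigma(\vec{c})/8)t+L/4\ge L/4$ and $y_1(t)-x_{-1}(t)\ge L/2$, so the localized estimate \eqref{171} gives $\|u\|_{L^\infty([y_1,y_1+2])}+\|v\|_{C^1([y_1,y_1+2])}=O(e^{-L/8})+O(\sqrt\alpha)$; in particular $|u|<c_1/2=\dot y_1$ there for $\alpha$ small and $L$ large. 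Since assumption \eqref{ttt} yields $x_0(t)\le y_1(t)$, Proposition \ref{weakGWP} together with \eqref{13c} forces $y(t)\ge0$ on $[y_1(t),y_1(t)+2]$; combined with $u-\dot y_1<0$ and $\Phi'\ge0$ this makes the first integral non positive. For the second integral I discard $-u_x^2\le0$, use \eqref{170}, and invoke the lower bound \eqref{po}, namely $\Psi'(\cdot-y_1(t))\ge\Psi'(2)$ on $[y_1(t),y_1(t)+2]$, to obtain
\[
\frac{3}{2}\int_{\R}(u^2-u_x^2)\Phi'(\cdot-y_1(t))\le \frac{3}{2}\int_{\R}u^2\Phi'(\cdot-y_1(t))\le \frac{15}{4\,\Psi'(2)}\int_{\R}\bigl(4v^2+5v_x^2+v_{xx}^2\bigr)\Psi'(\cdot-y_1(t)).
\]

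Gathering the two parts gives
\[
\frac{d}{dt}\mathcal{I}(t)\le\Bigl(-\frac{3c_1}{10}+\frac{c_1}{2^9}\cdot\frac{15}{4\,\Psi'(2)}\Bigr)\int_{\R}\bigl(4v^2+5v_x^2+v_{xx}^2\bigr)\Psi'(\cdot-y_1(t))+Ce^{-\frac{1}{6}(\sigma(\vec{c})t+L/8)},
\]
and the whole point is that the constant $2^9$ has been chosen precisely so that the bracket is non positive: since $\Psi'(2)=\frac{1}{3\pi}\frac{e^{1/3}}{1+e^{2/3}}\ge \frac{25}{1024}=\frac{15\cdot10}{2048\cdot3}$, one has $\frac{c_1}{2^9}\frac{15}{4\Psi'(2)}\le\frac{3c_1}{10}$. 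Discarding the resulting non positive term and integrating on $[0,t]$ yields \eqref{154}. I expect the delicate point to be exactly this calibration: the momentum flux $\frac32\int u^2\Phi'$ cannot be made small, since the support of $\Phi'$ meets the region where $\Psi'$ is of order one, so it must be absorbed by the genuine energy dissipation $-\frac{3c_1}{10}\int(\dots)\Psi'$, which is possible only because $[y_1(t),y_1(t)+2]$ lies where $\Psi'$ is bounded below by $\Psi'(2)$ and because the weight $c_1/2^9$ is small enough. The role of hypothesis \eqref{ttt} is equally essential: it guarantees $y\ge0$ on the support of $\Phi'$, which is what renders the transport term $\int(u-\dot y_1)y\Phi'$ harmless.
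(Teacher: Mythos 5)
Your proposal is correct and follows essentially the same route as the paper: differentiate the combined localized functional via the virial identities \eqref{163} and \eqref{go2}, use \eqref{ttt} to get $y\ge 0$ on the support of $\Phi'(\cdot-y_1(t))$ so that the transport term is controlled by $-\dot y_1\int y\Phi'$, bound $u^2\le 5(4v^2+5v_x^2+v_{xx}^2)$, and absorb the momentum flux $\tfrac{c_1}{2^9}\cdot\tfrac32\int u^2\Phi'$ into the energy dissipation using $\Phi'\lesssim \Psi'$ on $[0,2]$ (i.e. \eqref{po}) and the smallness of $2^{-9}$. The only cosmetic difference is that you keep the term $\int(u-\dot y_1)y\Phi'$ whole and observe it is nonpositive, where the paper splits it into \eqref{za1} plus the $-\dot y_1$ contribution; the calibration and conclusion are identical.
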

\begin{proof}
 Applying \eqref{163} with $ g(t,x)=\Psi(x - y_1(t)) $ and \eqref{go2} with  $ g(t,x)=\Phi(x - y_1(t)) $   and recalling the definition \eqref{defy1} of $ y_1(\cdot) $, we get
  \begin{equation}
 \frac{d}{dt}\big[E(u)+\frac{c_1}{2^9}M(u)\big] = -\frac{c_1}{2} \int_{\R}  \Bigl[ \Psi'(4v^2+5v_x^2+v_{xx}^2) +\frac{c_1}{2^9} \Phi'  y \Bigr] +\frac{3 c_1}{2^{10}} \int_{\R} (u^2-u_x^2) \phi' +\frac{c_1}{2^9}\int_{\R}uy\Phi'+J
 \end{equation}
 where  thanks to \eqref{168}, 
 \begin{eqnarray}
 J & = &\int_{\R}(\frac{2}{3}u-4v)u^2\Psi'dx
+\int_{\R}(5vh+v_x h_x)\Psi 'dx \nonumber \\
& \le &  \frac{c_1}{2^4} \int_{\R}   (4v^2+5v_x^2+v_{xx}^2)\Psi' + C \, \Vert u_0\Vert_{\mathcal{H}}^3e^{-\frac{1}{6}(\sigma(\vec{c})+L/8)} \; .
 \end{eqnarray}
  We first observe that 
\begin{equation}\label{za}
  \int_{\R} (u^2-u_x^2) \Phi' \le    \int_{\R} u^2  \Phi' = \int_{\R} (4v-v_{xx})^2  \Phi' 
  \le 5  \int_{\R} \Bigl( 4v^2+5v_x^2+v_{xx}^2\Bigr) \Phi'\; ,
  \end{equation}
  where, according to the definition \eqref{defPhi} of $ \Phi $, it holds 
  \begin{equation}\label{za0}
\frac{3c_1}{2^{10}} \, 5  \Phi' \le \frac{c_1}{4}  \Psi' \quad\text{ on } \R \; .
  \end{equation}
Second,  \eqref{ttt} together with  \eqref{defy1} and the definition \eqref{defPhi} of $ \Phi $ ensure that $ y(t,\cdot) $ is non negative on the support of $
 \Phi'(\cdot-y_1(t))$ that is $[y_1(t),y_1(t)+2] $. Therefore \eqref{171} leads to 
 \begin{eqnarray}\label{za1}
 \frac{c_1}{2^9}\int_{\R}uy\Phi' & \le & \frac{c_1}{2^9}\Vert u\Vert_{L^{\infty}(]y_1(t),y_1(t)+2[)}\int_{\R}y\Phi' dx
 \le \frac{c_1}{2^{11}} \int_{\R}y\Phi' \, dx \; .
 \end{eqnarray}
Therefore \eqref{za}-\eqref{za1} and\eqref{147} we obtain
 $$
 -\dot{y}_1(t) \int_{\R}  \Bigl[ \Psi'( 4v^{2}+5v^{2}_{x}+v^{2}_{xx}) +\frac{c_1}{2^9} \Phi' y \Bigr]  +\frac{3c_1}{2^{10}} \int_{\R} (u^2-u_x^2) \Phi'  +\frac{c_1}{2^9}\int_{\R}uy\Phi' 
 \le -\frac{c_1}{4} \int_{\R} \Psi'( 4v^2+5v_x^2+v_{xx}^2)dx  \; 
 $$
 that leads to 
 $$
 \frac{d}{dt}\big[E(u)+\frac{c_1}{2^9}M(u)\big] \le  C \, \Vert u_0\Vert_{\mathcal{H}}^3e^{-\frac{1}{6}(\sigma(\vec{c})+L/8)} \; .
 $$
 This proves \eqref{154} by integrating in time.
  \end{proof}
\subsection{Control of the growth of $ \|y\|_{L^1} $}
The control of the growth of the mass of $ y(t) $ is more delicate than in the case of the stability of a single peakon. Indeed, in this last case we deeply use that $ u $ stays $ L^\infty $-close to the peakon that is positive and thus the negative part of $ u$ stays small. In the present case, this is of course no more true because our train of antipeakon-peakons is no more positive. To overcome this difficulty we make use of the  monotony argument for $ E(u) +\gamma M(u) $ proven in Lemma \ref{lemmo}.
\begin{proposition}\label{prop5}
Let $u_0\in Y\cap H^\infty(\R) $ satisfying Hypothesis 1 and  $ u\in C(\R_+;H^\infty) \cap L^\infty_{loc}(\R_+; Y) $ be the associated solution to DP given by Proposition \ref{weakGWP}. There exist $ \alpha_0=\alpha_0(\vec{c}) $ and $ L_0=L_0(\vec{c}) $ such that
 if 
\begin{equation}\label{assum11}
u(t)\in U(\alpha, L,\vec{c}) \quad , \quad \forall t\in [0,T]
\end{equation}
with $ 0<\alpha\le \alpha_0 $ and $ L\ge L_0 $ then
\begin{equation}\label{prop4eq1}
\Vert y(t,\cdot)\Vert _{L^1(\R)}     \leq    e^{17+2^{-5} (c_1\wedge |c_{-1}|)  t}
 \frac{(\|\vec{c}\|_1 +1)^3}{(c_1 \wedge |c_{-1}|)^2} (1+\|y_0\|_{L^1}) \; \quad , \quad \forall t\in [0,T].
\end{equation}
\end{proposition}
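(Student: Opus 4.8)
The plan is to reduce the bound on the total variation $\|y(t)\|_{L^1}$ to a bound on the negative mass $\int_\R y^-(t)$, and to control the growth of the latter by feeding the monotonicity of the mixed functional $E+\gamma M$ from Lemma~\ref{lemmo} into a transport computation along the flow. By the approximation procedure \eqref{app} and the continuity statement of Proposition~\ref{weakGWP} I may assume $u_0\in Y\cap H^\infty(\R)$. Since $M(u)=\int_\R y$ is conserved by \eqref{conserM}, and Hypothesis~\ref{hyp} propagates along the flow so that $\supp y^-(t)\subset\,]-\infty,x_0(t)]$ and $\supp y^+(t)\subset[x_0(t),+\infty[$, one has $\|y(t)\|_{L^1}=M(u_0)+2\int_\R y^-(t)$; hence it suffices to bound $\int_\R y^-(t)$, or equivalently the positive mass $\int_\R y^+(t)$, since the two differ by the fixed constant $M(u_0)$.

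For the negative mass I would differentiate along the flow exactly as in the single-peakon estimate leading to \eqref{l1y}: using \eqref{DP2}, \eqref{13c} and \eqref{dodo},
\[
\frac{d}{dt}\int_\R y^-(t,x)\,dx=2\int_{-\infty}^{x_0(t)}u_x(t,x)\,y(t,x)\,dx\le 2\int_{-\infty}^{x_0(t)}\big(-u(t,x)\big)\,y^-(t,x)\,dx .
\]
In the single-peakon case one closes this with the pointwise bound $u\ge -C$, $C$ small, and this is precisely what fails here: near the antipeakons Lemma~\ref{LemmaPP1loc} only yields $-u\lesssim\|\vec c\|_1$, so a direct Grönwall argument would produce the far too large rate $e^{2\|\vec c\|_1 t}$. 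This is the heart of the difficulty, and the reason the static conservation laws alone are insufficient for a train containing antipeakons.

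To recover the slow rate I would bring in the dynamic information. First I would establish \eqref{ttt}, i.e. $x_0(t)\le y_1(t)$ on $[0,T]$, by a continuity argument: the interface $x_0(t)$ lies in the gap between the last antipeakon $x_{-1}(t)$ and the first peakon $x_1(t)$ produced by Lemma~\ref{eloignement}, so \eqref{171} keeps its speed $\dot x_0(t)=u(t,x_0(t))$ below $c_1/2=\dot y_1(t)$ for $\alpha$ small and $L$ large, which prevents $x_0$ from overtaking $y_1$. With \eqref{ttt} in hand, Lemma~\ref{lemmo} applies; since its energy part is nonnegative and $y\ge 0$ on $\supp\Phi(\cdot-y_1(t))\subset[y_1(t),+\infty[$, it controls the flux of positive mass past $y_1(t)$ at the slow effective rate governed by $c_1$, rather than by $c_{N_+}$. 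The symmetric statement on the antipeakon side, obtained from Lemma~\ref{lemmo} through the symmetry $u(t,x)\mapsto -u(t,-x)$ (which exchanges peakons and antipeakons, maps $y^\pm$ into one another and preserves Hypothesis~\ref{hyp} with $x_0\mapsto -x_0$), plays the same role on the left and supplies the rate $|c_{-1}|$. Using whichever of the two interface speeds is smaller to drive the transport identity above then replaces the unacceptable factor $\|\vec c\|_1$ by $2^{-5}(c_1\wedge|c_{-1}|)$.

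Integrating the resulting differential inequality and collecting the energy bound $E(u_0)\le\|u_0\|_\H^2$, the conservation of $M(u_0)$, and the $O(e^{-L/48})$ errors coming from Lemma~\ref{lemmo} into the prefactor $e^{17}(\|\vec c\|_1+1)^3(c_1\wedge|c_{-1}|)^{-2}$ would yield \eqref{prop4eq1}. I expect the control of the negative (antipeakon) mass to be the main obstacle, for two reasons: it is exactly the part not covered by the peakon-side monotonicity of Lemma~\ref{lemmo} and so requires the symmetry reduction, and, more seriously, one must turn the $L^2$-type almost-monotonicity of $E+\gamma M$ into an $L^1$ bound on $y^-$, that is a bound on an integral of the product of the low-regularity objects $u_x\in BV(\R)$ and the measure $y$; making this step rigorous, in the spirit of the dynamic estimate of Proposition~\ref{pro2}, is the delicate point.
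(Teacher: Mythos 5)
Your overall strategy --- conservation of $M$, the transport identity for $y^\pm$ combined with \eqref{dodo}, the mass term in Lemma \ref{lemmo}, and the reflection symmetry --- is the right collection of ingredients, and you correctly identify why a naive Gr\"onwall argument fails. But the step on which everything hinges is wrong as stated. You claim that $x_0(t)$ lies in the gap between $x_{-1}(t)$ and $x_1(t)$, so that \eqref{ttt} holds unconditionally; this is not justified and can fail. Hypothesis \ref{hyp} only forces the negative momentum to sit to the left of the positive momentum: a perturbation of the form $\nu\bigl(-p(\cdot-a)+p(\cdot-a-h)\bigr)$ with $a=z_1^0-1$ is small in $\H$, satisfies Hypothesis \ref{hyp}, and puts $x_0(0)$ at $z_1^0-1>y_1(0)=x_1(0)-L/4$. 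The paper instead runs a dichotomy on $x_0(0)$: if $x_0(0)\le x_1(0)-L/3$, a continuity argument (using $\dot x_0(t)=u(t,x_0(t))\le c_1/2\le\dot y_1$) gives \eqref{ttt} and one bounds $\int y^+$; otherwise \eqref{eloi} forces $x_0(0)\ge x_{-1}(0)+L/3$ and one applies the reflected version to bound $\int y^-$. In each branch only one of the two monotonicity statements is available, so your plan of "using whichever of the two interface speeds is smaller" does not describe a valid argument; the factor $c_1\wedge|c_{-1}|$ in \eqref{prop4eq1} is there only so that a single formula covers both branches.

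Second, the way the monotonicity enters is not the one you describe. In the branch where \eqref{ttt} holds, one differentiates $\int y^+$ (not $\int y^-$): $\frac{d}{dt}\int y^+\le2\int_{x_0(t)}^{+\infty}uy$, and one splits $[x_0(t),+\infty[$ at $y_1(t)+2$. On $[y_1(t)+2,+\infty[$, where $u$ can be of size $\|\vec c\|_1$, the mass $\int_{y_1(t)+2}^{+\infty}y$ is bounded by a constant uniformly in $t$, directly from the term $\frac{c_1}{2^9}\int y\,\Phi(\cdot-y_1(t))$ in Lemma \ref{lemmo} together with the conservation and nonnegativity of $E$ --- so your worry about "turning an $L^2$-type monotonicity into an $L^1$ bound" is moot: the functional already contains the mass, and $y\ge0$ there. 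On $[x_0(t),y_1(t)+2]$ one has $u\le O(\sqrt{\alpha_0})$ (the antipeakons only push $u$ downwards, and the peakons are at distance $\ge L/4$), so the Gr\"onwall rate is $O(\sqrt{\alpha_0})$, which is forced below $2^{-10}(c_1\wedge|c_{-1}|)$ by the choice of $\alpha_0$; the exponent $2^{-5}(c_1\wedge|c_{-1}|)t$ then comes from absorbing the linearly growing flux term, not from any interface speed. Your version, which integrates $(-u)y^-$ over all of $]-\infty,x_0(t)]$ --- a region containing every antipeakon --- cannot be closed with Lemma \ref{lemmo} as stated, and needs exactly this case splitting and two-region decomposition to become a proof.
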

\begin{proof}
In view of Lemma \ref{eloignement}, there exists $ N_-+N_+ $  $C^1$-functions $ x_{-N_-}(\cdot)< ..<x_{-1}(\cdot)< x_1(\cdot)< ..<x_{N_+}(\cdot) $ defined on $[0,T] $  that satisfy  \eqref{149}-\eqref{eloi} and  \eqref{147}-\eqref{148}.

We separate two cases depending on the place of $ x_0(0)$ with respect to $x_{1}(0)$. \\
{\bf Case 1.} $x_0(0)\le x_{1}(0)-L/3 $.
Then according to \eqref{149}, \eqref{147},  the definition \eqref{defq} of $ x_0(\cdot) $ and a continuity argument, $x_0(t)\le x_1(t) -L/3 $ and in particular 
 $ \dot{x}_0(t)\le c_1/2 $ for all $ t\in [0,T]$.  This ensures that 
$$ 
 x_0(t)+L/12 \le y_1(t)= x_1(0)-L/4 +\frac{c_1 }{2}t , \quad \forall t\in [0,T],
$$
where $ y_1(\cdot)$ is defined in \eqref{defy1}.\\
Therefore Lemma \ref{lemmo} leads to 
$$
\int_{\R} (4v^2+5v_x^2+v_{xx}^2) \Psi(\cdot-y_1(t))+\frac{c_1}{2^9}\int_{\R} y\phi(\cdot-y_1(t)) \le \int_{\R} (4v^2+5v_x^2+v_{xx}^2)\Psi(\cdot-y_1(0))+\frac{c_1}{2^9}\int_{\R} y(0)\phi(\cdot-y_1(0))+O(e^{-\frac{L_0}{48}})\;
$$
 Making use of the conservation of $ E $ and of the definition of $ \Psi $, if follows that for $ L $ large enough, 
$$
\int_{y_1(t)+2}^{+\infty} y(t,x) \, dx \le \frac{2^9}{c_1} E(u_0) + \|y_0\|_{L^1} +O(e^{-\frac{L_0}{48}}) \le 1+\frac{2^9}{c_1} E(u_0) + \|y_0\|_{L^1} \; , \quad \forall  t\in [0,T] \; .
$$
On the other hand, according to \eqref{dodo}, $ u_x\ge -u $ on $ ] x_0(t),+\infty[$ and by Lemma \ref{LemmaPP1loc} $ \forall t\in [0,T]$ we have,
$$
u(t) \le \sum_{i=1}^{N_+}c_i +O( \sqrt{\alpha_0} ) \quad \text{on} \quad [y_1(t)+2,+\infty] \quad  \text{and} \quad
u (t)\le   O ( \sqrt{\alpha_0} ) +O(e^{-L_0/8}) \le O(\sqrt{\alpha_0})  \quad \text{on} \quad [x_0(t),y_1(t)+2]  \; ,
$$
where to get the last inequality we take $ L_0 >0 $ such that $ O(e^{-L_0/8}) \le \sqrt{\alpha_0} $.
Therefore, according to  \eqref{DP2} and \eqref{13c}, we have
\begin{align*}
\frac{d}{dt}\int_{\R}y^{+}(t,x)dx&=\frac{d}{dt}\int_{q(t,x_0)}^{+\infty}y(t,x)dx=- 2\int^{+\infty}_{x_0(t)}u_x(t,x)y(t,x)dx\\
&\leq2\int^{y_1(t)+2}_{x_0(t)} u(t,x) y(t,x)dx+2\int_{y_1(t)+2}^{+\infty}u(t,x)y(t,x)dx\\
&\leq  2\big(\|\vec{c}\|_{1}+O(\sqrt{\alpha_0})\big) \big(1+\frac{2^9}{c_1} E(u_0)+\|y_0\|_{L^1}\big)+ O(\sqrt{\alpha_0})  \int_{\R}y^{+}(t,x)dx.
\end{align*}
Hence, Gr$\ddot{\text{o}}$nwall's inequality yields $ \forall t\in [0,T]$
\begin{equation}
\int_{\R}y^{+}(t,x)dx\leq   e^{C \sqrt{\alpha_0}   t}\Bigl( \Vert y_0 \Vert_{L^1} +  2t \, \big(\|\vec{c}\|_{1}+1\big) \big(1+
\frac{2^9}{c_1} E(u_0)+\|y_0\|_{L^1}\big)\Bigr) ,
\end{equation}
for some universal constant $ C>0 $. Since, according to Proposition \ref{smoothWP},  $\dsp M(u)=\int_{\R} y $ is conserved for positive times,
 it follows that 
 \begin{equation}\label{l1yy}
\Vert y(t,\cdot)\Vert _{L^1(\R)}  \leq   2 e^{C \sqrt{\alpha_0}   t}\Bigl( \Vert y_0 \Vert_{L^1} +  2t \, \big(\|\vec{c}\|_{1}+1\big) \big(1+\frac{2^9}{c_1} E(u_0)+\|y_0\|_{L^1}\big)\Bigr) .
\end{equation}
Taking $ \alpha_0\le (c_1\wedge |c_{-1}|)^2 (C\,  2^{10})^{-2} $ we thus deduce that 
$$
\Vert y(t,\cdot)\Vert _{L^1(\R)}     \leq   2 e^{2^{-10} (c_1\wedge |c_{-1}|) t}\Bigl( \Vert y_0 \Vert_{L^1} +
2 t \big(\|\vec{c}\|_{1}+1\big) \big(1+\frac{2^9}{c_1} E(u_0)+\|y_0\|_{L^1}\big)\Bigr) \; .
$$
Since for $ t\ge 0$, $ t  e^{2^{-10} (c_1\wedge |c_{-1}|) t}\le t e^{2^{-6}(c_1\wedge |c_{-1}|) t} e^{2^{-5} (c_1\wedge |c_{-1}|) t}\le \frac{e^{2^{-5} (c_1\wedge |c_{-1}|) t}}{2^{-6}(c_1\wedge |c_{-1}|)} e^{-1}$, it follows that  
 \begin{equation}\label{l1y2}
 \Vert y(t,\cdot)\Vert _{L^1(\R)} \le 2 e^{2^{-5} (c_1\wedge |c_{-1}|)  t}\Bigl( \Vert y_0 \Vert_{L^1} +
2^{7} \frac{\big(\|\vec{c}\|_{1}+1\big)}{(c_1\wedge |c_{-1}|) } \big(1+\frac{2^9}{(c_1\wedge |c_{-1}|) } E(u_0)+\|y_0\|_{L^1}\big)\Bigr) \; .
\end{equation}
Finally, taking $ \alpha_0\le 1 $, \eqref{assum11} ensures that $ E(u_0) \le (\|\vec{c}\|_1+1)^2 $, and noticing that 
$$
\frac{\|\vec{c}\|_1+1}{c_1\wedge |c_{-1}|} \ge 1 , 
$$
we eventually get  \eqref{prop4eq1}.

{\bf Case 2: } $x_0(0)\ge x_{1}(0)-L/3 $. Then by \eqref{eloi}, we must have  $ x_0(0)\ge x_{-1}(0)+L/3 $.

In this case, we make use of the fact  that the  DP equation is invariant by the change of unknown $ u(t,x)\mapsto \tilde{u}(t,x)=-u(t,-x) $. Clearly $ \tilde{u}(0) $ also satisfies hypothesis \ref{hyp} with $ \tilde{x}_0(t)=-x_0(t) $.
 Morever, $\tilde{u} $ satisfies \eqref{147} on $ [0,T] $ with $ N_-$ and $ N_+ $ respectively replaced by
  $ \tilde{N}_-=N_+ $ and $ \tilde{N}_+=N_- $, $ c_i $ replaced by $\tilde{c}_i=-c_{-i}$  and  $ x_i(t) $ replaced by $ \tilde{x}_i(t)=-x_{-i}(t)$. In particular, it holds 
   $$\tilde{x}_0(0)=-x_0(0)\le -x_{-1}-L/3=\tilde{x}_1(0) -L/3 ,
   $$
   and thus $\tilde{u} $ satisfies the hypothesis of Case 1. Therefore $ \tilde{y}=\tilde{u}-\tilde{u}_{xx} 
   =-y(t,-\cdot)$ satisfies \eqref{l1y2} with $ c_{-1} $ and $ c_1 $ respectively replaced by 
  $ \tilde{c}_{-1} =-c_1$  and $ \tilde{c}_1=-c_{-1} $. This completes the proof of  \eqref{prop4eq1} .
\end{proof}

Let us now state the adaptation of Proposition \ref{pro2} in the present case. The role of $ x(\cdot) $ will be now play by 
 $ x_1(\cdot) $ that localizes the slowest peakon. The proof is essentially the same as the one of Proposition \ref{pro2}.
  However, in the present case \eqref{50b} is not available anymore on $ \R $ but we actually only need that 
  it holds on $ [x_{-1}(t)+L/4, +\infty[ $ that is verified since $\sum_{j=-N_-\atop j\neq 0}^{N_+}\varphi_{c_j}  \ge
 O(\sqrt{\alpha_0})+O(e^{-L_0/8})
  $ on this interval. 

\begin{proposition}\label{pro2loc}
There exists $\alpha_0(\vec{c})>0 $  and $ L_0(\vec{c})>0 $ such that for any  $u_0\in Y\cap H^\infty(\R) $ satisfying Hypothesis \ref{hyp}, if  the solution $u\in C(\R_{+}; H^\infty(\mathbb{R})\big)$ emanating from $u_0$ satisfies for some
  $0<\alpha<\alpha_0$, $L\ge L_0 $ and  $T >0 $,
\begin{equation}\label{assum4loc}
u\in U\Big(\alpha,L/2,\vec{c}\Big) \quad \text{ on } [0,T], 
\end{equation}
  then
 for all $t\in[0,T]$,
\begin{equation}\label{81loc}
\Vert y^{-}(t,\cdot)\Vert_{L^1(]x_1(t)-\frac{1}{16}c_{1}t,+\infty[)}\leq e^{-c_{1}t/8}\Vert y_0\Vert_{L^1(\R)} ,
\end{equation}
where $y^- =\max(-y,0)$, and $x_1(\cdot)$ is the $C^1$-function 
constructed in Lemma \ref{eloignement}.
Moreover it holds 
\begin{equation}\label{lolo}
u(t,\cdot)-6v(t,\cdot) \le   e^{27-\frac{c_1t}{32}}  
 \frac{(\|\vec{c}\|_1 +1)^3}{(c_1 \wedge |c_{-1}|)^2} (1+\|y_0\|_{L^1})  \quad \text{on} \; ]x_1(t)-8, +\infty[ \;, 
\end{equation}
where $v=(4-\partial_x)^{-1} u $ .

\end{proposition}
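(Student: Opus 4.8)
The plan is to run the proof of Proposition~\ref{pro2} almost verbatim, with the modulation curve $x(\cdot)$ of Lemma~\ref{modulation} replaced by the curve $x_1(\cdot)$ localizing the slowest positive peakon (provided by Lemma~\ref{eloignement}), and with $c$, $r(t)$ replaced by $c_1$, $x_1(t)$. Since by hypothesis $u_0\in Y\cap H^\infty(\R)$, the solution is smooth and we may work directly with it. From \eqref{assum4loc}, Lemma~\ref{eloignement} produces the $C^1$-curves $x_{-N_-}(\cdot)<\cdots<x_{N_+}(\cdot)$ satisfying \eqref{149}, \eqref{147}--\eqref{148} and the separation \eqref{eloi}; in particular \eqref{149} forces $\dot x_1\ge 7c_1/8$ on $[0,T]$. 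The point is that near $x_1(t)$ the solution looks like a single peakon: Lemma~\ref{LemmaPP1loc} together with \eqref{147} and the exponential decay of the profiles show that on any fixed window around $x_1(t)$ one has $\|u(t)-\varphi_{c_1}(\cdot-x_1(t))\|_{L^\infty}=O(\sqrt{\alpha})+O(e^{-L/8})$, the remaining bumps sitting at distance $\ge 3L/4$. After shrinking $\alpha_0$ and enlarging $L_0$, this local closeness makes the three threshold facts used in Proposition~\ref{pro2} --- namely $u(\tau,\cdot)\le \tfrac{3}{4}c_1$ on $]-\infty,x_1(\tau)-\ln(3/2)]$, $u(\tau,x_1(\tau)-\ln 3)\ge c_1/4$, and $x_0(\tau)\le x_1(\tau)+\ln(4/3)$ --- hold exactly as before, the contributions of the other peakons and of the antipeakons being either exponentially small or of the favourable sign.

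With these preliminaries, I would split as in Proposition~\ref{pro2} according to the position of $x_0(t/2)$, where $x_0(\cdot)=q(\cdot,x_0)$ separates $y^-$ from $y^+$. In the case $x_0(t/2)<x_1(t/2)-\ln(3/2)$, the bounds $\dot x_1\ge 7c_1/8$ and $\dot x_0=u(\cdot,x_0(\cdot))\le\tfrac{3}{4}c_1$, via a continuity argument, give $x_1(t)-x_0(t)\ge\tfrac{c_1}{16}t$, so that $y^-(t)\equiv0$ on $]x_1(t)-\tfrac{1}{16}c_1 t,+\infty[$ and \eqref{81loc} is trivial. In the opposite case I would reproduce the two-step flow argument: Step~1 bounds $\big|\int_{x_0(t/2)-\ln 2}^{x_0(t/2)}y(t/2)\big|$ by $e^{-c_1 t/4}\|y_0\|_{L^1}$ using \eqref{13}--\eqref{13c}, the inequality $u_x\ge u\ge c_1/4$ on $[x_0(\tau)-\ln 2,x_0(\tau)]$ (valid near $x_1$) and the ensuing $q_x(t/2,\cdot)\ge e^{c_1 t/8}$; Step~2 transports this estimate from $t/2$ to $t$ along the flow $q_{t/2}$ of \eqref{defq1}, which yields \eqref{81loc}.

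The only genuine change, and the point flagged before the statement, occurs in the lower bound for $\partial_x q_{t/2}$ in Step~2: it rests on a lower bound for $u$ along the trajectories, i.e. on the analogue of \eqref{50b}. That bound is no longer available on all of $\R$, but the characteristics entering Step~2 start near $x_0(t/2)\approx x_1(t/2)$ and travel forward, so by \eqref{eloi} and the speed bounds \eqref{149} they remain in $[x_{-1}(\tau)+L/4,+\infty[$ throughout $[t/2,t]$. On that half-line $\sum_j\varphi_{c_j}(\cdot-x_j(\tau))\ge -O(\sqrt{\alpha_0})-O(e^{-L_0/8})$, so \eqref{147} gives $u(\tau,\cdot)\ge -2^{-5}c_1$ there after a final adjustment of $\alpha_0$ and $L_0$; this is precisely what is needed to obtain $\partial_x q_{t/2}\ge e^{-2^{-4}c_1 t}$ and hence \eqref{81loc}. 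I expect the main technical care to lie exactly here, in checking that all the relevant characteristics stay trapped in this good half-line, away from the antipeakons; this follows from the fact that $u$ stays $\ge c_1/4$ near $x_1$ (preventing any backward drift) together with \eqref{149}.

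Finally, \eqref{lolo} follows as Corollary~\ref{coro1}. Writing $6v-u=\tfrac{1}{2}(e^{-|\cdot|}-e^{-2|\cdot|})\ast y\ge-\tfrac{1}{2}e^{-|\cdot|}\ast y^-$ and splitting, for $x\ge x_1(t)-8$, the convolution at $z=x_1(t)-\tfrac{c_1}{16}t$, I would estimate the part over $]-\infty,x_1(t)-\tfrac{c_1}{16}t]$ by the total-mass bound of Proposition~\ref{prop5} times the decay $e^{0\wedge(8-\frac{c_1}{16}t)}$, and the part over $[x_1(t)-\tfrac{c_1}{16}t,+\infty[$ by \eqref{81loc}. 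Collecting exponents, the time rate is $2^{-5}(c_1\wedge|c_{-1}|)-\tfrac{c_1}{16}\le-\tfrac{c_1}{32}$ and the constants combine into $e^{27-c_1 t/32}\,\frac{(\|\vec{c}\|_1+1)^3}{(c_1\wedge|c_{-1}|)^2}(1+\|y_0\|_{L^1})$, which is \eqref{lolo}.
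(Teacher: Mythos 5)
Your proposal is correct and follows essentially the same route as the paper: the same two-case split on the position of $x_0(t/2)$ relative to $x_1(t/2)$, the same two-step flow argument in Case 2, and crucially the same key observation that the analogue of \eqref{50b} is only needed on the half-line $[x_{-1}(\tau)+L/4,+\infty[$, where \eqref{147}--\eqref{148} and the separation \eqref{eloi} guarantee $u\ge -2^{-5}c_1$ along the relevant characteristics. The derivation of \eqref{lolo} by splitting the convolution at $x_1(t)-\tfrac{c_1}{16}t$ and invoking Proposition \ref{prop5} and \eqref{81loc} also matches the paper's argument.
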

\begin{proof} As mentioned above we mainly proceed as in Proposition  \ref{pro2} but with $ x(\cdot) $ replaced by $ x_1(\cdot) $. 
Hence, for $ t\in [0,T]$, we separate two possible cases  according to the distance between $x_0(t/2) $  and $ x_1(t/2) $.\\
\noindent
{\it Case 1:}
\begin{equation}\label{79b}
x_0(t/2)<x_1(t/2)-\ln(3/2).
\end{equation}
In this case, the same continuity argument as in the proof of Proposition \ref{pro2} ensures that 
\begin{equation}
x_1(t)-x_0(t)\geq\ln(3/2)+\frac{c_1}{16}t \; .
\end{equation}
This proves that $y^{-}(t,\cdot)=0$ on $]x_1(t)-\frac{1}{16}c_1t,+\infty[$ and thus that  \eqref{81loc} holds in this case.\\
\noindent
{\it Case 2:}
\begin{equation}\label{83b}
x_0(t/2)\geq x_1(t/2)-\ln(3/2).
\end{equation}
Then, as in  the proof of Proposition \ref{pro2}, \eqref{81loc} is a consequence of the two following estimates :
\begin{equation}\label{91b}
\left\vert \int_{x_0(t/2)-\ln 2}^{x_0(t/2)}y(t/2,s)ds\right\vert\leq e^{-\frac{1}{4}c_1 t}\Vert y_0\Vert_{L^1(\R)}
\end{equation}
and
\begin{equation}\label{94b}
\left\vert \int_{x_1(t)-\ln (3/2)-\frac{c_1}{16} t }^{x_0(t)}y(t,s)\, ds\right\vert \le 
e^{c_1 t/8} \left\vert \int_{x_0(t/2)-\ln 2}^{x_0(t/2)}y(t/2,s)\, ds\right\vert \; .
\end{equation}
\eqref{91b} can be obtained exactly as \eqref{91} in Proposition \ref{pro2}. We thus focus on \eqref{94b} where there is the main change. Indeed, we are not allowed to use \eqref{50b} in order to prove  the crucial estimate \eqref{13bbb}.
 The idea to overcome this difficulty  is to notice that actually we only need such estimate from below on $ u $  in  $[x_{-1}(t)+L/4, +\infty[ $. 

Indeed, let $ q_t$ be the flow-map defined in \eqref{defq1}. For $L$ large enough, $ \eqref{eloi} $ and \eqref{83b} ensure  that $ x=q_{t/2}(t/2,x)\ge x_{-1}(t/2)+L/2 $ as soon as $ x\in [x_0(t/2)-\ln 2, x_0(t/2)]$. Therefore, by \eqref{149}, \eqref{148},  \eqref{PP3loc} and a continuity argument, 
  for $ \tau \in [t/2,T] $ it holds 
$$
q_{t/2}\bigl(\tau, x\bigr)-x_{-1}(\tau)\ge L/2,  \qquad  \forall x\in[x_0(t/2)-\ln 2,x_0(t/2)] \; .
$$
On the other hand,  \eqref{148} and \eqref{PP3loc} ensure that for all $\tau\in [0,T] $, 
$$
 u(\tau,x)  \ge - 2^{-5} c_1 \quad \text{on} \quad [x_{-1}(\tau) +L/4,+\infty[ \; .
$$
Combining the two above estimates with \eqref{dodo} we obtain as in Proposition \ref{pro2} that  for any $ \tau \in [t/2, t] $ and any 
 $x\in[x_0(t/2)-\ln 2,x_0(t/2)] $, 
\begin{equation}\label{13bbbb}
\partial_x q_{t/2}(t,x)\ge exp\Big(-\int_{t/2}^t  2^{-5} c_1 \,  ds\Big)\ge e^{-2^{-4} c_1 t }\; .
\end{equation}
Once we have the above estimate, the rest of the proof of \eqref{81loc} follows the same lines as in the proof of Proposition \ref{pro2}. 

Finally to prove \eqref{lolo},  we take $ \alpha_0 $ and $ L_0 $  that are suitable for Proposition \ref{prop5} . \eqref{ql} together with \eqref{81loc} , \eqref{prop4eq1}  ensure that for $ x\ge x_1(t)-8 $ it holds
\begin{eqnarray*}
6v(x)-u(x) & \ge &  - \frac{1}{2}\int_{-\infty}^{x_1(t)-\frac{c_1}{16}t} e^{-|x-z|} y^-(z)\, dz- \frac{1}{2}
\int_{x_1(t)-\frac{c_1}{16}t}^{+\infty} e^{-|x-z|} y^-(z)\, dz\\
& \ge & - e^{0\wedge (8-\frac{c_1}{16}t)}     e^{2^{-5} (c_1\wedge |c_{-1}|) t} \, e^{17} \,   \frac{(\|\vec{c}\|_1 +1)^3}{(c_1 \wedge |c_{-1}|)^2} (1+\|y_0\|_{L^1})  \\
& & -\frac{1}{2} e^{-c_1t/8}\Vert y_0\Vert_{L^1(\R)}
\\
 & \ge & - e^{18}\, e^{9-\frac{c_1t}{32}} \, 
 \frac{(\|\vec{c}\|_1 +1)^3}{(c_1 \wedge |c_{-1}|)^2} (1+\|y_0\|_{L^1}) \; .
 \end{eqnarray*}
 \end{proof}
 \subsection{An approximate solution}
 A new difficulty with respect to the case of a single peakon will be that 
 $$
  t\mapsto \sum_{j=-N_-\atop j\neq 0}^{N_+}  \varphi_{c_j}(\cdot-z_j^0-c_{j} t ) 
  $$
   is not an exact solution of the DP equation. The aim of the  following lemma is to overcome this difficulty by proving that if $ L>0 $ is large enough then this is an approximate  solution with an error in $ L^2(\R) $ of order $ e^{-L/2} $  on a time interval of  order $ \ln(L^{3/4})$.
 \begin{lemma}\label{exsol}
Let be given $ N_- \in \N^*$ negative   velocities $ c_{-N_-} <..<c_{-1}<0 $, $ N_+ \in \N^*$ positive velocities $0<c_1<..<c_{N_+} $ and  $z_{-N_-}^0<..z_{-1}^0 <z_1^0<..<z_{N_+}^0 $.
There exists  $L_0>0 $ only depending on $ \vec{c} $ such that for any $ L\ge L_0 $ if 
\begin{equation} \label{zz}
z_i^0-z_j^0\ge L \quad \text{for} \quad i> j 
\end{equation}
then the solution $ u $ to \eqref{DP} emanating from 
$\dsp u_0=\sum\limits_{\substack{\scriptscriptstyle j=-N_- \\ \scriptscriptstyle j \neq 0}}^{\scriptscriptstyle N_+} \varphi_{c_j}(\cdot-z_j^0) $ satisfies 
$$
\sup_{t\in [0, 2^5 (c_1\wedge |c_{-1}|)^{-1} \ln( L^{3/4})]} \Bigl\|u(t)-\sum_{j=-N_-\atop j\neq 0}^{N_+}  \varphi_{c_j}(\cdot-z_j^0-c_{j} t ) 
\Bigr\|_{\H} \le e^{-L/2}\; .
$$
\end{lemma}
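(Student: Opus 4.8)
The plan is to show that the difference between the exact solution and the free superposition of the travelling profiles stays exponentially small in $L^2$ by a Gronwall argument, the whole point being to exploit the sign structure coming from Hypothesis~\ref{hyp} in order to compensate for the lack of an $H^1$-control on the DP flow. Write $z_j(t)=z_j^0+c_jt$ and $R(t,\cdot)=\sum_{j}\varphi_{c_j}(\cdot-z_j(t))$, so that $u_0=R(0,\cdot)$. First I would record that $u_0\in Y$ and satisfies Hypothesis~\ref{hyp}: its momentum density $y_0=2\sum_j c_j\,\delta_{z_j^0}$ is a finite signed sum of Dirac masses whose negative part (the indices $j<0$) sits strictly to the left of its positive part, so one may take any $x_0\in[z_{-1}^0,z_1^0]$. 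Consequently the solution $u$ of Proposition~\ref{weakGWP} satisfies \eqref{dodo} and, by Lemma~\ref{LemmaPP0}, $\|u(t)\|_{L^\infty}\le 2(1+\sqrt2)\|u_0\|_{\H}\le C(\vec c)$ for all $t\ge0$. Since the speeds are ordered with the index, $z_i(t)-z_j(t)=(z_i^0-z_j^0)+(c_i-c_j)t\ge L$ for every $i>j$ and every $t\ge0$: the bumps only spread apart.

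Next I would compute the error. Each travelling profile $\phi_j:=\varphi_{c_j}(\cdot-z_j(t))$ is an exact (weak) solution of \eqref{DP}, so by linearity of $\partial_x$ and of $(1-\partial_x^2)^{-1}\partial_x$ only the cross terms survive:
\[
R_t+\frac{1}{2}\partial_x(R^2)+\frac{3}{2}(1-\partial_x^2)^{-1}\partial_x(R^2)=\mathcal{E},\qquad
\mathcal{E}=\frac{1}{2}\partial_x\!\Big(\sum_{j\neq k}\phi_j\phi_k\Big)+\frac{3}{2}(1-\partial_x^2)^{-1}\partial_x\!\Big(\sum_{j\neq k}\phi_j\phi_k\Big).
\]
Using $\|\phi_j\phi_k\|_{L^2}\lesssim_{\vec c}\sqrt{L}\,e^{-L}$ for bumps separated by at least $L$ (the product $e^{-|x-z_j|}e^{-|x-z_k|}$ is $\le e^{-|z_j-z_k|}$ and essentially supported on an interval of length $|z_j-z_k|$), together with the pointwise bound $|\partial_x(\phi_j\phi_k)|\le2\phi_j\phi_k$ and the $L^2\to L^2$ boundedness of $(1-\partial_x^2)^{-1}\partial_x$, I would obtain $\sup_{t\ge0}\|\mathcal{E}(t)\|_{L^2}\lesssim_{\vec c}\sqrt{L}\,e^{-L}$.

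I would then set $w=u-R$, which solves $w_t+\frac{1}{2}\partial_x\big(w(u+R)\big)+\frac{3}{2}(1-\partial_x^2)^{-1}\partial_x\big(w(u+R)\big)=-\mathcal{E}$ in $C(\R_+;L^2)$, recalling that $u\in C^1(\R_+;L^2)\cap C(\R_+;H^1)$ by Proposition~\ref{weakGWP} and that $R$ is smooth as an $L^2$-valued curve. Pairing with $w$ and integrating by parts, the transport term becomes $-\frac{1}{4}\int w^2\,\partial_x(u+R)$. The piece $\frac{1}{4}\int w^2 R_x$ is harmless since $\|R_x\|_{L^\infty}\le\|\vec c\|_1$; the genuinely dangerous piece is $-\frac{1}{4}\int w^2 u_x$, for which no $H^1$-bound on $u$ is available. \emph{This is the main obstacle}, and it is resolved exactly by the one-sided bound \eqref{dodo}, which forces $u_x\ge-|u|$ and hence $-u_x\le\|u\|_{L^\infty}\le C(\vec c)$, giving $-\frac{1}{4}\int w^2u_x\le C(\vec c)\|w\|_{L^2}^2$. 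The nonlocal term is bounded by $C\|u+R\|_{L^\infty}\|w\|_{L^2}^2\le C(\vec c)\|w\|_{L^2}^2$, using once more the $L^2$-boundedness of $(1-\partial_x^2)^{-1}\partial_x$. Altogether $\frac{d}{dt}\|w\|_{L^2}\le C(\vec c)\|w\|_{L^2}+\|\mathcal{E}\|_{L^2}$.

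Finally, since $w(0)=u_0-R(0,\cdot)=0$, Gronwall's inequality yields $\|w(t)\|_{L^2}\le e^{C(\vec c)t}\int_0^t\|\mathcal{E}\|_{L^2}\,ds\lesssim_{\vec c}e^{C(\vec c)t}\,t\,\sqrt{L}\,e^{-L}$. Evaluating at $t=2^5(c_1\wedge|c_{-1}|)^{-1}\ln(L^{3/4})$ turns $e^{C(\vec c)t}$ into a fixed power $L^{a(\vec c)}$ of $L$, so that $\|w(t)\|_{L^2}\lesssim_{\vec c}L^{a(\vec c)}(\ln L)\sqrt{L}\,e^{-L}\le e^{-L/2}$ as soon as $L\ge L_0(\vec c)$; the stated bound then follows from $\|w\|_{\H}\le\|w\|_{L^2}$ in \eqref{1.5}. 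To make the energy identity fully rigorous for the merely $H^1$ solution, one approximates $u_0$ by the smooth data \eqref{app} and passes to the limit through the continuity statement \eqref{cont2}.
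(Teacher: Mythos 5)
Your proof is correct, but it takes a genuinely different route from the paper at the key technical step. Both arguments start from the same decomposition $w=u-\underline{u}$ with the cross-term source $\mathcal{E}$ (your bound $\sup_t\|\mathcal{E}(t)\|_{L^2}\lesssim_{\vec c}\sqrt{L}\,e^{-L}$ is fine and even sharper than the paper's $(L+1)e^{-2L/3}$ for $\|F\|_{L^1}+\|F_x\|_{\M}$), and both close with Gronwall on $[0,\,2^5(c_1\wedge|c_{-1}|)^{-1}\ln(L^{3/4})]$. The difference is the norm and, crucially, the Gronwall constant. The paper runs a $W^{1,1}$-estimate in the style of \cite{CM}, whose constant is $C(M_T+\|\vec c\|_1)$ with $M_T=\sup_{[0,T]}\|y(t)\|_{\M}$; since $M_T$ is only controlled through Proposition \ref{prop5} and grows like $L^{3/4}$ on the relevant time interval, the exponent becomes $C L^{3/4}\ln L$ and the conclusion requires both Proposition \ref{prop5} and a continuity/bootstrap argument to close. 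You instead run a plain $L^2$-energy estimate and observe that the only genuinely dangerous term, $-\tfrac14\int w^2 u_x$, needs just a \emph{one-sided} bound on $u_x$, which Hypothesis \ref{hyp} provides through \eqref{dodo} ($u_x\ge -|u|$ everywhere, hence $-u_x\le\|u\|_{L^\infty}\le 2(1+\sqrt2)\|u_0\|_{\H}$ by Lemma \ref{LemmaPP0}); the nonlocal term is harmless since $(1-\partial_x^2)^{-1}\partial_x$ is bounded on $L^2$ and no $u_x^2$ appears in the DP nonlocal part. This yields a time-uniform constant $C(\vec c)$, so no bootstrap and no appeal to the $\M$-norm growth of $y$ are needed; the price is that you only get an $L^2$ bound on $w$ rather than the paper's $W^{1,1}$ bound, but only the $\H$-norm is claimed in the statement, so nothing is lost. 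Your verification that $u_0$ satisfies Hypothesis \ref{hyp} (so that \eqref{dodo} and Lemma \ref{LemmaPP0} apply to $u(t)$ for all $t$, via Proposition \ref{weakGWP}) is exactly the point that makes this shortcut available, and your closing remark about regularizing via \eqref{app} and \eqref{cont2} correctly covers the rigor of the energy identity.
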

\begin{proof}
We set $\dsp \underline{u}(t)=\sum\limits_{\substack{\scriptscriptstyle j=-N_- \\ \scriptscriptstyle j \neq 0}}^{\scriptscriptstyle N_+} \varphi_{c_j}(\cdot-z_j^0-c_{j} t ) $. Using that  $ \varphi_{c}(x-ct) $ is a solution to \eqref{DP}, one can check that $ \underline{u}$ satisfies 
\begin{equation}\label{toy}
\underline{u}_t+ \underline{u} \underline{u}_x + \frac{3}{2} \partial_x (1-\partial_x^2)^{-1} (\underline{u}^2) =F 
\end{equation}
with 
$$
F:= \sum_{i<j} c_i c_j  \Bigl( 1 + 3 (1-\partial_x^2)^{-1} \Bigr) \partial_x \Bigl(\varphi(\cdot- z_i^0-c_{i} t)
\varphi(\cdot - z_j^0-c_{j} t)\Bigr)\; .
$$
On account of \eqref{zz}, straightforward calculations lead to 
$$
\sup_{t\in [0,T]} \big\|\partial_x \Bigl(  \varphi(\cdot- z_i^0-c_{i} t)
\varphi(\cdot- z_j^0-c_{j} t)\Bigr) \|_{L^1}+\|\partial_x^2 \Bigl(  \varphi(\cdot- z_i^0-c_{i} t)
\varphi(\cdot- z_j^0-c_{j} t)\Bigr) \big\|_{\M}
 \lesssim (L+1) e^{-2L/3}\; . 
$$
so that 
\begin{equation}\label{toy2}
\sup_{t\in \R_+} \|F(t)\|_{L^1}+\|F_x(t)\|_{\M}\lesssim (L+1) e^{-2L/3} \; . 
\end{equation}
Note also that for all $ t\ge 0 $ it holds $\sup_{t\in [0,T]}\|(\underline{u}-\underline{u}_{xx})(t)\|_{\M} = \|\vec{c}\|_{1}$. 

Now, since $ \underline{u}(0)=u_0 $ clearly satisfies Hypothesis \ref{hyp}, the solution $ u$ to \eqref{DP} emanating from $ 
u_0=\underline{u}(0) $ exists for all positive times in $ Y $. For $ T>0 $ we set 
 $$
 M_T=\sup_{t\in [0,T]}\|u-u_{xx}\|_{\M} \; .
 $$
At this stage it s worth noticing that Proposition \ref{prop5} ensures that 
 \begin{equation} \label{MT}
  M_T \le e^{17+2^{-5} (c_1\wedge |c_{-1}|)  T}
 \frac{(\|\vec{c}\|_1 +1)^4}{(c_1 \wedge |c_{-1}|)^2} \; .
 \end{equation}
Setting $ w=u-\underline{u}$, using exterior regularization and  proceeding as in \cite{CM} (see also \cite{MR2271927} for the DP equation pp$\colon$480-482), 
 we get on $ [0,T] $
 \begin{align*}
  \frac{d}{dt} \Bigl( \int_{\R} |\rho_n \ast w |+   |\rho_n \ast w_x | \Bigr) 
  \lesssim & (M_T +\|\vec{c}\|_{1} ) \Bigl( \int_{\R} |\rho_n \ast w |+ |\rho_n \ast w_x | \Bigr) \\
  & + \int_{\R} (|\rho_n \ast F |+  |\rho_n \ast F_x | )+R_n(t)
  \end{align*}
  where $ (\rho_n)_{n\ge 0} $ is defined in \eqref{rho}, 
  $$
  R_n(t) \to 0 \text{ as } n\to +\infty  \; \text{and } |R_n(t)| \lesssim 1, \; n\ge 1, t\in\R_+.
  $$
Therefore Gronwall inequality and since $w(0)=w_x(0)=0$, yields to 
\begin{equation}
\int_{\R} |\rho_n \ast w(t)|+|\rho_n \ast w_x(t) | \lesssim \int_0^t e^{C (M_T +\|\vec{c}\|_{1} )(t-s)}
\Bigl( \int_{\R} |\rho_n \ast F(s) |+ |\rho_n \ast F_x(s) | +|R_n(s)| \Bigr)ds \; .
\end{equation}
Letting $ n $ tends to $+\infty $ and making use of \eqref{toy2} and then \eqref{MT}, we thus get that for $ L$ large enough 
\begin{eqnarray}\label{89}
\sup_{t\in [0,T]}\|w(t)\|_{\H} \le\sup_{t\in [0,T]}\|w(t)\|_{L^2} \le \sup_{t\in [0,T]}\|w(t)\|_{W^{1,1}} \le e^{C (1+M_T +\|\vec{c}\|_{1} )T} e^{-5 L/8} \; .
\end{eqnarray}
This estimate together with  \eqref{MT} ensure that there exists $ L_0(\vec{c}) \ge 1 $ such that  for all $ L> L_0$, \begin{equation}\label{90}
 \Bigl\|u(t)- \underline{u}(t)
\Bigr\|_{\H} \le e^{-L/2}
\end{equation}
as soon as 
 \begin{equation}\label{9191}
 0\le t \le 2^5 (c_1\wedge |c_{-1}|)^{-1} \ln (L^{3/4}) \; .
\end{equation}
Indeed, as soon as \eqref{90}-\eqref{9191} are satisfied, \eqref{MT} gives
$$
M_T \le e^{17}
 \frac{(\|\vec{c}\|_1 +1)^4}{(c_1 \wedge |c_{-1}|)^2}   L^{3/4} 
$$
so that \eqref{89} leads to 
$$
\|u(t)-\underline{u}(t)\|_{\H} \le
\exp\Bigl( C_1+C_2  \, L^{3/4} \ln( L^{3/4})\Bigr) e^{-5L/8}\; .
$$
where $ C_1=C_1(\vec{c}) >0 $ and $ C_2=C_2(\vec{c})>0 $. This gives  \eqref{90} for $ L $ large enough 
and proves the result by a continuity argument.
\end{proof}

\subsection{ Two global  estimates}\label{global}
The following generalization of the quadratic identity in Lemma \ref{Lemma P2} was proved in \cite{AK}.
\begin{lemma}(Global quadratic identity)\label{lemma20}
Let   $u\in L^2(\R)$ and assume that $ z_{-N_-}<..<z_{-1}<z_1<..<z_{N_+}  $ with 
  $z_i-z_{j}\geq  L/2 $ for $i>j $. Then  it holds
\begin{equation}
E(u)-\sum_{i=-N_{-}}^{N_+}E(\varphi_{c_i})=\left\Vert u-\sum_{i=-N_-}^{N_+}\varphi_{c_i}(\cdot-z_i)\right\Vert _{\mathcal{H}}^2+4\sum_{i=-N_{-}}^{N_+}c_i\left(v(z_i)-\frac{c_i}{6}\right)+O(e^{-L/2})\quad i\in [[-N_{-},N_+]]\setminus \{0\} .
\label{203}
\end{equation}
where $ v=(4-\partial_x^2)^{-1} u $.
\end{lemma}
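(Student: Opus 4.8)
The plan is to treat $E$ as a quadratic form and to exploit both its polarization and the reproducing property of the peakon profile under the associated inner product. First I would record the bilinear form attached to $\|\cdot\|_{\mathcal{H}}^2=E(\cdot)$: for $u,w\in L^2(\R)$ with $v=(4-\partial_x^2)^{-1}u$ and $\tilde v=(4-\partial_x^2)^{-1}w$, polarizing \eqref{EE} gives $\langle u,w\rangle_{\mathcal{H}}=\int_{\R}(4v\tilde v+5v_x\tilde v_x+v_{xx}\tilde v_{xx})$. Since the Fourier multiplier $4+5\xi^2+\xi^4$ factors as $(1+\xi^2)(4+\xi^2)$, integrating by parts collapses this to the clean expression $\langle u,w\rangle_{\mathcal{H}}=\int_{\R}v\,(1-\partial_x^2)w$, using that $(4-\partial_x^2)\tilde v=w$.

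The key observation is the reproducing identity. Since $(1-\partial_x^2)\varphi_c(\cdot-r)=2c\,\delta_r$, the formula above yields $\langle u,\varphi_c(\cdot-r)\rangle_{\mathcal{H}}=2c\,v(r)$ for every $r\in\R$ and every $u\in L^2(\R)$; this is precisely the content hidden in Lemma \ref{Lemma P2}, now valid at an arbitrary point $r$ and not just at the maximum of $v$. Specializing $u=\varphi_{c_j}(\cdot-z_j)$ and recalling $\rho_{c_i}=(4-\partial_x^2)^{-1}\varphi_{c_i}$, I obtain the pairwise formula $\langle\varphi_{c_i}(\cdot-z_i),\varphi_{c_j}(\cdot-z_j)\rangle_{\mathcal{H}}=2c_j\,\rho_{c_i}(z_j-z_i)$.

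With these identities in hand the proof is an expansion of the square. Writing $\psi=\sum_{i}\varphi_{c_i}(\cdot-z_i)$ (sum over $i\in[[-N_-,N_+]]\setminus\{0\}$), bilinearity gives $\|u-\psi\|_{\mathcal{H}}^2=E(u)-2\langle u,\psi\rangle_{\mathcal{H}}+\|\psi\|_{\mathcal{H}}^2$, where $\langle u,\psi\rangle_{\mathcal{H}}=\sum_i 2c_i v(z_i)$ by the reproducing identity. For $\|\psi\|_{\mathcal{H}}^2=\sum_{i,j}2c_j\,\rho_{c_i}(z_j-z_i)$ the diagonal terms equal $2c_i\rho_{c_i}(0)=c_i^2/3=E(\varphi_{c_i})$ thanks to $\rho_c(0)=c/6$, while each off-diagonal term is controlled by the explicit decay $\rho_{c_i}(x)=\tfrac{c_i}{3}e^{-|x|}-\tfrac{c_i}{6}e^{-2|x|}$ together with the separation $|z_i-z_j|\ge L/2$, giving $|2c_j\rho_{c_i}(z_j-z_i)|\le\tfrac{2}{3}|c_i||c_j|e^{-L/2}$ and hence $\|\psi\|_{\mathcal{H}}^2=\sum_i E(\varphi_{c_i})+O(e^{-L/2})$. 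Substituting these back produces $\|u-\psi\|_{\mathcal{H}}^2=E(u)-4\sum_i c_i v(z_i)+\sum_i E(\varphi_{c_i})+O(e^{-L/2})$; isolating $E(u)-\sum_i E(\varphi_{c_i})$ and using $E(\varphi_{c_i})=c_i^2/3$ to rewrite $-2\sum_i E(\varphi_{c_i})=-4\sum_i c_i\tfrac{c_i}{6}$, which combines with $4\sum_i c_i v(z_i)$ into the term $4\sum_i c_i(v(z_i)-\tfrac{c_i}{6})$, gives exactly \eqref{203}.

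The computation is essentially algebraic once the inner product is rewritten as $\int v\,(1-\partial_x^2)w$, and the statement is really a finite-rank perturbation of the single-peakon identity. The only point demanding care is the bookkeeping of the off-diagonal cross terms: one must verify that the finitely many products $c_j\rho_{c_i}(z_j-z_i)$ are each $O(e^{-L/2})$ uniformly over admissible configurations, with the implied constant depending only on $\vec c$ and $N_\pm$. This is where the hypothesis $z_i-z_j\ge L/2$ enters, and it is the multi-bump analogue of the fact that a single peakon is self-reproducing under $\langle\cdot,\cdot\rangle_{\mathcal{H}}$.
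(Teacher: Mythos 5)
Your proof is correct and follows essentially the same route as the paper: expand the square, use $(1-\partial_x^2)\varphi_{c_i}(\cdot-z_i)=2c_i\delta_{z_i}$ to turn the cross terms into $4\sum_i c_i v(z_i)$ and the self-energy into $2\sum_{i,j}c_i\rho_{c_j}(z_i-z_j)$, identify the diagonal with $\sum_i E(\varphi_{c_i})$ via $\rho_{c}(0)=c/6$, and absorb the off-diagonal terms into $O(e^{-L/2})$ using the explicit exponential decay of $\rho_{c_j}$ and the separation of the $z_i$'s. Making the bilinear form $\langle u,w\rangle_{\mathcal H}=\int v\,(1-\partial_x^2)w$ and the reproducing identity explicit is only a presentational difference from the paper's computation.
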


\begin{proof}
First, according to the definition of the energy space \eqref{EE} we notice that
\begin{align}
\left\Vert u-\sum_{i=-N_-}^{N_+}\varphi_{c_i}(\cdot-z_i)\right\Vert _{\mathcal{H}}^2&=E(u)+E\left(\sum_{i=-N_-}^{N_+}\varphi_{c_i}(\cdot-z_i)\right)-2\sum_{i=-N_-}^{N_+}\big \langle(1-\partial_x^2)\varphi_{c_i}(\cdot -z_i), v
\big\rangle \nonumber\\
&=E(u)+E\left(\sum_{i=-N_-}^{N_+}\varphi_{c_i}(\cdot-z_i)\right)-4\sum_{i=-N_-}^{N_+}c_iv(z_i).
\label{204}
\end{align}
where  we used that $(1-\partial_x^2)\varphi_{c_i}(\cdot-z_i)=2c_i\delta_{z_i}$ with $\delta_{z_i}$ the Dirac mass applied at point $z_i$. However,
\begin{align}
\displaystyle E\left(\sum_{i=-N_-}^{N_+}\varphi_{c_i}(\cdot-z_i)\right)&=\sum_{i=1}^N\sum_{j=1}^N\langle (1-\partial_x^2)\varphi_{c_i}(\cdot-z_i), \rho_{c_j}(\cdot-z_j)\rangle \nonumber\\
&=2\sum_{i=-N_-}^{N_+}\sum_{j=-N_-}^{N_+}c_i\rho_{c_j}(z_i-z_j)\nonumber\\
&\displaystyle =\frac{1}{3}\sum_{i=-N_-}^{N_+}c_i^2+2\sum_{i=-N_-}^{N_+}c_i\sum\limits_{\substack{j=-N_- \\ j \neq i}}^{N_+}\rho_{c_j}(z_i-z_j) \; .
\label{205}
\end{align}
 From the  definition of $\rho_{c_j}$ in \eqref{1.88} 
  and the fact that $z_i-z_{j}\ge 2L/3 $ for $ i> j $, it follows that
\begin{align}\label{206}
\Bigl| \sum\limits_{\substack{j=-N_- \\ j \neq i}}^{N_+}\rho_{c_j}(z_i-z_j)\Bigr| &=
\Bigl| \sum\limits_{\substack{j=-N_- \\ j \neq i}}^{N_+}\frac{1}{4}\Bigl( e^{-2\vert\cdot\vert}\ast\varphi_{c_j}(\cdot-z_j)\Bigr)(z_i)\Bigr| =\Bigl|\sum\limits_{\substack{j=-N_- \\ j \neq i}}^{N_+}\frac{c_j}{3}e^{-\vert z_i-z_j\vert}-\frac{c_j}{6}e^{-\vert z_i-z_j\vert}\Bigr| \nonumber\\
&\leq\|\vec{c}\|_{1}e^{-2L/3}\leq O(e^{-L/2})
\end{align}
Gathering \eqref{204}, \eqref{205}, \eqref{206} with $E(\varphi_{c_i})=c_i^2/3$ then \eqref{203} holds for $L>L_0\gg1$ large enough.
\end{proof}


The following lemma is an adaptation of  Lemma \ref{LemmaPP} in the present case.

\begin{lemma}\label{LemmaPPloc}
Let $ u\in L^\infty(\R) \cap L^2(\R) $ such that 
\begin{equation}\label{hygloc}
\Big\|u-\sum_{j=-N_-\atop j\neq 0}^{N_+}\varphi_{c_i}(\cdot-z_j)\Big\|_{L^{\infty}(\mathbb{R})}\leq  \frac{10^{-5}}{N_-+N_+} 
(c_1\wedge |c_{-1}|)
\end{equation}
for some $ c_{-N_-}<..<c_{-1}<0<c_1<..<c_{N_+} $ and some $ Z\in \R^{N_-+N_+} $  with 
  $z_i-z_j\geq 2 L/3$ for all $i>j $. Then there exists  $L_0>0$ only depending on  $ \vec{c}$, such that for $ L>L_0\gg1$ large enough, the function $ v=(4-\partial_x^2)^{-1} u$ has got a unique point of local maximum (resp. minimum) $ \xi_i $ on $\Theta_{z_i}
=[z_i-6.7,z_i+6.7]  $
 for any $ 1\le i \le N_+ $(resp. $-N_-\le i \le -1 $).
Moreover, 
\begin{equation}\label{hyg2loc}
\left\Vert u-\sum_{j=-N_-\atop j\neq 0}^{N_+}\varphi_{c_j}(\cdot-\xi_j)\right\Vert_{\mathcal{H}} \le \left\Vert u-\sum_{j=-N_-\atop j\neq 0}^{N_+}\varphi_{c_j}(\cdot-z_j)\right\Vert_{\mathcal{H}} + O(e^{-L/4})\; .
\end{equation}
and 
\begin{equation}\label{hyg33loc}
\xi_i\in{\mathcal V}_{i}=[z_i-\ln \sqrt{2}, z_i+\ln \sqrt{2}] , \quad \forall i\in [[-N_-,N_+]]\setminus \{0\} \; .
\end{equation}
Finally, for any $(y_1,..,y_{N_+})\in \R^{N_+} $, such that 
$$
z_{-1}+L/4 <y_1<z_1<y_2<z_2<\cdot\cdot<y_{N_+} <z_{N_+}
$$
with  $ |y_i-z_j| \ge L/4 $ for $(i,j)\in [[1,N_+]]^2 $ it holds
\begin{equation}\label{hyg3loc}
 \sup_{x\in  ]y_i,y_{i+1}[ \setminus\Theta_{z_i} }\big(|u(x)|, |v(x)|,|v_x(x)|\big) \le \frac{c_1\wedge |c_{-1}|}{100} \; , \quad 
  i\in [[1,N_+]] \; ,
\end{equation}
where we set $ y_{N_+ +1}=+\infty $.
\end{lemma}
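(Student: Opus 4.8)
The plan is to reduce everything to the single-peakon analysis already carried out in Lemma \ref{LemmaPP}, performed separately on each window $\Theta_{z_i}$, and then to globalize the quasi-optimality via the quadratic identity of Lemma \ref{lemma20}. First I would set $R=\sum_{i}\rho_{c_i}(\cdot-z_i)$ and write $v-R=\frac14 e^{-2|\cdot|}\ast\big(u-\sum_{i}\varphi_{c_i}(\cdot-z_i)\big)$, so that Young's convolution inequalities, exactly as in \eqref{jd}, bound $\|v-R\|_{L^\infty}$, $\|(v-R)'\|_{L^\infty}$ and $\|(v-R)''\|_{L^\infty}$ by a fixed multiple of the left-hand side of \eqref{hygloc}, hence by $C\,10^{-5}(c_1\wedge|c_{-1}|)/(N_-+N_+)$. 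Next, the exponential decay of $\rho_c$ in \eqref{1.88} together with $z_i-z_j\ge 2L/3$ shows that on $\Theta_{z_i}$ one has $R=\rho_{c_i}(\cdot-z_i)+O(\|\vec{c}\|_1 e^{-2L/3})$ in $C^2$, the remainder coming from the tails of the profiles centered at $z_j$, $j\neq i$. Combining the two, $v$ is $C^2$-close to $\rho_{c_i}(\cdot-z_i)$ on $\Theta_{z_i}$, with total error $\le 10^{-5}|c_i|$ once $L$ is large enough, because $|c_i|\ge c_1\wedge|c_{-1}|$ and $N_-+N_+\ge 1$. This places us precisely in the regime \eqref{hyg} of Lemma \ref{LemmaPP} attached to the single profile $\varphi_{c_i}$.

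On each $\Theta_{z_i}$ I would then run the argument behind \eqref{zs1}. When $c_i>0$, the inequalities $\rho''\le\frac{\sqrt{2}-2}{6}$ on $\mathcal{V}_0$ and $|\rho'|\ge 10^{-4}$ on $\Theta_0\setminus\mathcal{V}_0$, rescaled by $|c_i|\ge c_1\wedge|c_{-1}|$, dominate the perturbation of $v'$ estimated above; hence $v'$ is strictly decreasing on $\mathcal{V}_{z_i}$, positive on $[z_i-6.7,z_i-\ln\sqrt2]$ and negative on $[z_i+\ln\sqrt2,z_i+6.7]$, so $v$ has a unique critical point $\xi_i$ in $\Theta_{z_i}$, it is a local maximum, and $\xi_i\in\mathcal{V}_{z_i}$, which gives \eqref{hyg33loc}. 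When $c_i<0$ the signs of $\rho_{c_i}''$ and $\rho_{c_i}'$ flip and the same reasoning produces a unique local minimum $\xi_i\in\mathcal{V}_{z_i}$. The pointwise bound \eqref{hyg3loc} would then follow on $]y_i,y_{i+1}[\,\setminus\Theta_{z_i}$ from the decay estimates \eqref{zs2} applied to $\varphi_{c_i},\rho_{c_i}$ off $\Theta_{z_i}$, to the tails of the remaining profiles (which are $O(e^{-2L/3})$ thanks to $|y_i-z_j|\ge L/4$ and $z_i-z_j\ge 2L/3$), and to the convolution error, taking $L$ large depending on $\vec{c}$.

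Finally, for the comparison \eqref{hyg2loc} I would apply Lemma \ref{lemma20} twice to the same $u$, once with the shifts $z_i$ and once with the shifts $\xi_i$; both families are admissible since $\xi_i\in\mathcal{V}_{z_i}$ preserves the separation $\xi_i-\xi_j\ge 2L/3-\ln2\ge L/2$ for $L$ large. Subtracting the two identities cancels the common term $E(u)-\sum_i E(\varphi_{c_i})$ and leaves
$$\Big\|u-\sum_{i}\varphi_{c_i}(\cdot-\xi_i)\Big\|_{\H}^2-\Big\|u-\sum_{i}\varphi_{c_i}(\cdot-z_i)\Big\|_{\H}^2=4\sum_{i}c_i\big(v(z_i)-v(\xi_i)\big)+O(e^{-L/2}).$$
The key observation is that every summand on the right is nonpositive: for $c_i>0$, $\xi_i$ is the local maximum on $\Theta_{z_i}\ni z_i$, so $c_i(v(z_i)-v(\xi_i))\le 0$; for $c_i<0$, $\xi_i$ is the local minimum, so $c_i(v(z_i)-v(\xi_i))=|c_i|(v(\xi_i)-v(z_i))\le 0$. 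Hence the difference of squared norms is $\le O(e^{-L/2})$, and subadditivity of the square root yields \eqref{hyg2loc} with the stated remainder $O(e^{-L/4})$.

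I expect the main obstacle to be the simultaneous bookkeeping of the two competing small parameters: one must guarantee that both the tail error $O(e^{-2L/3})$ and the convolution error coming from \eqref{hygloc} stay strictly below the rescaled thresholds of \eqref{zs1} on \emph{every} window $\Theta_{z_i}$ at once. This is exactly what forces the factor $1/(N_-+N_+)$ in the hypothesis \eqref{hygloc} and the largeness of $L=L(\vec{c})$; by contrast, once the sign of each $c_i(v(z_i)-v(\xi_i))$ has been identified, the globalization step \eqref{hyg2loc} is essentially immediate.
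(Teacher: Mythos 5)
Your proposal is correct and follows essentially the same route as the paper: the tail estimate $\sum_j\rho_{c_j}(\cdot-z_j)=\rho_{c_i}(\cdot-z_i)+O(e^{-L/4})$ on each window, a repetition of the proof of Lemma \ref{LemmaPP} there to produce the $\xi_i\in{\mathcal V}_i$, and a double application of the quadratic identity \eqref{203} with the $z_i$'s and then the $\xi_i$'s. Your explicit verification that each term $c_i\big(v(z_i)-v(\xi_i)\big)$ is nonpositive is precisely the (unstated) content of the paper's final step, so the argument matches.
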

\begin{proof}
 Since $z_i-z_j\geq 2 L/3$ for all $i>j $ it holds 
\begin{equation}\label{187}
\sum_{j=-N_-\atop j\neq 0}^{N_+}\rho_{c_j}(x-z_j)=\rho_{c_i}(x-z_i)+
\| \vec{c}\|_{1}O(e^{-L/4}),\quad\forall x\in [z_i-L/3,z_i+L/3] \; .
\end{equation}
Therefore repeating the proof of  Lemma \ref{LemmaPP} on each $ [z_i-L/3,z_i+L/3] $, 
 we obtain that, for $L $ large enough,  the function $ v=(4-\partial_x^2)^{-1} u$ has got a unique point of maximum (resp. minimum) $ \xi_i $ on $\Theta_{z_i}
=[z_i-6.7,z_i+6.7]  $
 for any $ 1\le i \le N_+ $(resp. $-N_-\le i \le -1 $) and that moreover $ \xi_i\in {\mathcal V}_{i}$. In particular, 
 $\xi_i-\xi_j\ge L/2 $  for $ i>j $ and thus applying \eqref{203} for  the $z_i's $ and then the $ \xi_i's $, \eqref{hyg2loc} follows.

\end{proof}
\subsection{Beginning of the proof of Theorem \ref{stabantipeakonpeakon}}\label{sec66}
Let $ \vec{c} $ and $ A>0 $ be fixed and let $ B =B(\vec{c},A) \ge 1 $ to be fixed at the end of this section.
Let $ \tilde{\alpha}_0 $ be the minimum and $ \tilde{L}_0 $ be the maximum of respectively  all the $ \alpha_0(\vec{c}) $  and all the $ L_0(\vec{c})  $ appearing in the preceding statements of Section \ref{sec-multi}. We set 
\begin{equation}\label{defeps0}
\eps_0=\min\Bigl( \frac{10^{-20}}{B \tilde{K}} \Bigl(\frac{c_1\wedge |c_{-1}|}{(1+\|\vec{c}\|_1)(N_-+N_+)}\Bigr)^2,
\; \tilde{\alpha}_0\Bigr) \; .
\end{equation}
where $  \tilde{K} $ is  the constant depending on $ \vec{c} $ that appear in Lemma 
\ref{eloignement}. 
For $ \alpha>0 $ we also set 
\begin{equation}\label{defTalpha}
T_{\alpha}= \max\Bigl(\frac{2^5}{c_1\wedge |c_{-1}|)} (9+\ln(\frac{{\mathcal A}_0}{\alpha^2})) , 0 \Bigr) 
\end{equation}
with 
$$
{\mathcal A}_0=e^{27} \frac{(\|\vec{c}\|_1 +1)^3}{(c_1\wedge |c_{-1}|)^2} (1+A) \; .
$$
For $ 0<\eps<\eps_0 $ and $ L>\tilde{L}_0 $, we set $ \alpha=B(\eps+L^{-\frac{1}{8}})$.
Since $\alpha\ge L^{-1/8}$, we have  $ \ln (1/\alpha^2) \le \ln(L^{1/4})$ and thus 
$$
 T_\alpha\le \frac{2^5}{c_1\wedge |c_{-1}|} \ln (L^{3/4}) \; ,
 $$
as soon as $ L\ge {\mathcal A}_0^4  \vee e^{36}$. Therefore we set 
\begin{equation}\label{defL0}
L_0=\max(\eps_0^{-8}, {\mathcal A}_0^4,\tilde{L}_0)\; .
\end{equation}
According to Lemma \ref{exsol}, for $ L\ge L_0 $, this ensures that the solution $ {\bf u} $ to \eqref{DP} emanating from 
$\dsp {\bf u}_0=\sum\limits_{\substack{\scriptscriptstyle j=-N_- \\ \scriptscriptstyle j \neq 0}}^{\scriptscriptstyle N_+}\varphi_{c_j}(\cdot-z^0_j)$ satisfies
$$
\big\Vert {\bf u}(t)-\sum_{j=-N_-\atop j\neq 0}^{N_+}\varphi_{c_j}(x-z_j^0-c_jt) \big\Vert _{\mathcal{H}} \le e^{-L/2} 
\le L^{-1/8}  , \quad \forall t\in [0, T_{\alpha}]\;  .
$$
On the other hand, according to the continuity with respect to initial data (see Proposition \ref{weakGWP}),  for  any $ \varepsilon>0 $ there exists $ \delta=\delta
(A,\varepsilon,c)>0 $ such that for any $ u_0\in Y $ satisfying Hypothesis \ref{hyp} and \eqref{huhu0}-\eqref{huhu}
 with $ A $ and $\delta $, it holds 
$$
\|u(t)-{\bf u}(t) \|_{\H} \le \varepsilon , \quad \forall t\in [0, T_{\alpha}]\; ,
$$
where $u\in C(\R_+; H^1(\R)) $ is the solution of the (D-P) equation emanating from $ u_0$. Gathering the two above estimates we thus infer that 
\begin{equation}\label{kjs}
\big\Vert u(t)-\sum_{j=-N_-\atop j\neq 0}^{N_+}\varphi_{c_j}(x-z_j^0-c_jt) \big\Vert _{\mathcal{H}} \le\varepsilon+ L^{-{1/8}}   , \quad \forall t\in [0, T_{\alpha}]\;  .
\end{equation}

So let $ u_0\in Y\cap H^\infty(\R) $ that satisfies Hypothesis \ref{hyp}  and \eqref{huhu0}-\eqref{huhu}
 with $ A $, $\delta $ and $ L\ge L_0$. \eqref{kjs} together with the definitions  \eqref{defeps0}-\eqref{defL0}
   and   Lemma \ref{LemmaPP1loc} then ensure that
 \begin{equation}\label{kjjjs}
\big\Vert u(t)-\sum_{j=-N_-\atop j\neq 0}^{N_+}\varphi_{c_j}(x-z_j^0-c_jt) \big\Vert _{L^\infty} <  \frac{10^{-5}}{N_-+N_+} 
(c_1\wedge |c_{-1}|)
 , \quad \forall t\in [0, T_{\alpha}], 
\end{equation}
 Applying  Lemma \ref{LemmaPPloc} with the $ z_j=z_j^0+c_j t $ we obtain the existence of the local maxima (or minima) $ \xi_j(t) $. Note that \eqref{hyg33loc}  ensures that  $ \xi_i(t)-\xi_j(t) \ge 2L/3 $ for $ i> j $
  and \eqref{hyg3loc} ensures that $ \xi_i(t) $ is the only point of  maximum (resp. point of  minimum) of $ v(t)=(4-\partial_x^2)^{-1} u(t)$ 
   on $ [\xi_i(t)-L/4, \xi_i(t)+L/4] $ for $i\in [[1,N_+]] $ (resp. $i\in [[N_{-},-1]]$). 
   
 By a continuity argument it remains to prove that  for any $ T\ge T_\alpha $, if 
 \begin{equation}\label{kkks}
u(t)\in U\left(2B(\varepsilon+L^{-{1/8}}),L/2\right)  \quad\text{on} \quad [0,T] 
\end{equation}
then there exists $ \xi_{N_-}(T) <..\xi_{-1}(T)<\xi_{1} (T)< ..\xi_{N_+} (T)$ with 
$ \xi_i(T)-\xi_j(T)\ge 2L/3 $ for $ i>j $ such that 
\begin{equation}\label{kjs2}
\big\Vert u(t)-\sum_{j=-N_-\atop j\neq 0}^{N_+}\varphi_{c_j}(x-\xi_j(T)) \big\Vert _{\mathcal{H}} \le
B(\varepsilon+ L^{-{1/8}})  , \quad \forall t\in [0, T]\;  .
\end{equation}
and $ \xi_i(T) $ is the only point of global maximum (resp. point of global minimum) of $ v(t) $ 
   on $ [\xi_i(T)-L/4, \xi_i(T)+L/4] $ for $i\in [[1,N_+]] $ (resp. $i\in [[N_{-},-1]]$).
Now it is worth noticing that \eqref{kkks} together with the definitions  \eqref{defeps0}-\eqref{defL0}
 and Proposition \ref{pro2loc} ensure that there exist $ x_{N_-}(T) <..x_{-1}(T)<x_{1} (T)< ..x_{N_+} (T)$ with 
$ x_i(T)-x_j(T)\ge 3L/4 $ for $ i>j $ such that 
\begin{equation}\label{defxi}
\big\Vert u(t)-\sum_{j=-N_-\atop j\neq 0}^{N_+}\varphi_{c_j}(x-x_j(t)) \big\Vert _{\mathcal{H}} \le
\tilde{K} B(\varepsilon+ L^{-{1/8}}) , \quad \forall t\in [0, T]\;  .
\end{equation}
and Lemma \ref{LemmaPP1loc}  together with \eqref{defeps0}-\eqref{defL0} ensure that
 $$
 \big\Vert u(t)-\sum_{j=-N_-\atop j\neq 0}^{N_+}\varphi_{c_j}(x-x_j(t)) \big\Vert_{L^\infty} \le 
  \frac{10^{-5}}{N_-+N_+} 
(c_1\wedge |c_{-1}|)
   , \quad \forall t\in [0, T] \; .
 $$
 Applying  Lemma \ref{LemmaPPloc} with the $ z_j=x_j(t)$ we obtain the existence of the local maximum (or minimum) $ \xi_j(t) $. Note that \eqref{hyg33loc} ensures that $ \xi_i(t)-\xi_j(t) \ge 2L/3 $ for $ i> j $
  and \eqref{hyg3loc} ensures that $ \xi_i(t) $ is the only point of global maximum (resp. point of global minimum) of $ v(t)=(4-\partial_x^2)^{-1} u(t)$ 
   on $ [\xi_i(t)-L/4, \xi_i(t)+L/4] $ for $i\in [[1,N_+]] $ (resp. $i\in [[N_{-},-1]]$). Moreover, \eqref{hyg2loc} and again 
 Lemma \ref{LemmaPP1loc}    prove that for $ L\ge L_0 $ large enough 
  \begin{equation}\label{defxi4}
  \big\Vert u(t)-\sum_{j=-N_-\atop j\neq 0}^{N_+}\varphi_{c_j}(x-\xi_j(t)) \big\Vert_{\mathcal{H}} \le 
  2\tilde{K} B(\varepsilon+ L^{-{1/8}})
 \end{equation}
 and
 $$
 \big\Vert u(t)-\sum_{j=-N_-\atop j\neq 0}^{N_+}\varphi_{c_j}(x-\xi_j(t)) \big\Vert_{L^\infty} \le 
  \frac{10^{-5}}{N_-+N_+} 
(c_1\wedge |c_{-1}|)
   , \quad \forall t\in [0, T] \; .
 $$
Finally, Proposition \ref{pro2loc}   together with the definition \eqref{defTalpha} of $ T_\alpha $ and   \eqref{hyg33loc}  then ensure that 
\begin{equation}\label{kj22}
u(t,\cdot)-6v(t,\cdot) \le \alpha^2= (\eps+L^{-1/8})^2 \quad \text{on} \quad [x_1(t)-8,+\infty[ \quad \forall
 t\in [T_\alpha,T] \; .
\end{equation}  


For the remaining of the proof we need the following localized versions of Lemmas \ref{Lemma P4}-\ref{Lemma P1},   where the global functional $E$ and $F$ are replaced by their localized versions $E_i$ and $F_i$.
\subsection{Localized estimates}
In the sequel we set 
\begin{equation}\label{defK}
K=\sqrt{L}/8\; .
\end{equation}
Let $ x_{-N_-}(\cdot)< ..<x_{-1}(\cdot)< x_1(\cdot)< ..<x_{N_+}(\cdot) $ be the $ N_-+N_+ $  $C^1$-functions  defined on $[0,T] $ (see \eqref{defxi}) and define the function $ \Phi_i=\Phi_i(t,x) $, $i=1,..,N_{+}$,  by 
\begin{equation}\label{defphii}
\left\{
\begin{array}{l}
\Phi_{N_+}(t)=\Psi_{N_+,\sqrt{L}/8}(t)=\Psi_{\sqrt{L}/8}(\cdot-y_{N_+}(t)) \\
\Phi_i(t)=\Psi_{i,\sqrt{L}/8}(t)-\Psi_{i+1,\sqrt{L}/8}(t)=\Psi_{\sqrt{L}/8}(\cdot-y_i(t))-\Psi_{\sqrt{L}/8}(\cdot-y_{i+1}(t)),\quad i=1,...,N_{+}-1,
\end{array}
\right.
\end{equation}
where $ \Psi_{i,K} $ and the $ y_i $'s are defined in Section \ref{Sectmonotonie} \eqref{defPsi}-\eqref{defy1}. It
is easy to check that the $ \Phi_i$'s are positive functions  and that $\displaystyle \sum_{i=1}^{N_+} \Phi_{i}\equiv
\Psi_{1,\sqrt{L}/8} $.
 Since $ L\ge L_0\ge 1 $,   \eqref{defK} and \eqref{psipsi} ensure that  $ \Phi_i $ satisfies
  for $i\in \{1,...,N_+\} $, 
 \begin{equation}
 \big\vert 1-\Phi_{i}\big\vert \le  2 e^{-\sqrt{L}} \mbox{ on } \left] y_i+\frac{L}{8},  y_{i+1}-\frac{L}{8}\right[,
  \label{de1}
 \end{equation}
 and
\begin{equation}
 \big\vert\Phi_{i}\big\vert \le 2 e^{-\sqrt{L}} \mbox{ on } \R\backslash\left]y_i-\frac{L}{8},
 y_{i+1}+\frac{L}{8}\right[ \; ,\label{de2}
 \end{equation}
 where we set $ y_{N_{+}+1}:=+\infty $. \\
 It is worth noticing that, somehow, $ \Phi_i(t) $ takes care of only the ith bump of $u(t)$.
 We will use  the following localized  version of $ E $ and $ F $ defined  for
$i\in \{1,..,N_+\}, $ by
 \begin{equation}\label{defEi}
 E_i(t) = \int_{\R}(4v^2+5v_x^2+v_{xx}^2) \Phi_{i}(t)  \mbox{ and }
 F_i(t)= \int_{\R}\left(-v^{3}_{xx}+12vv^{2}_{xx}-48v^{2}v_{xx}+64v^{3}\right) \Phi_i(t) \; .
 \end{equation}
In the statement of the four following lemmas we fix the time. This corresponds  to fix 
$x_{-N_{-}}< ..<x_{-1}<x_1<..<x_{N_+}$ with $x_i-x_j>3L/4 $ for $i>j $ such that  
\begin{equation}\label{defxixi}
\big\Vert u(t)-\sum_{j=-N_-\atop j\neq 0}^{N_+}\varphi_{c_j}(x-x_j) \big\Vert _{\mathcal{H}} \le
\tilde{K} B(\varepsilon+ L^{-{1/8}})
\end{equation}
and to fix 
$(y_1,..,y_{N_+})\in \R^{N_+} $, such that 
$$
x_{-1} +L/4<y_1<x_1<y_2<x_2<\cdot\cdot<y_{N_+} <x_{N_+}<y_{N_+ +1}=+\infty
$$
with  $ |y_i-x_j| \ge L/4 $ for $(i,j)\in [[1,N_+]]^2 $.
In particular, 
 $ E_i $ and $ F_i $ do not depend on
time.\\
For $i=1,...,N_+$, we set $\Omega_i=]y_i-L/8,y_{i+1}+L/8[$, the interval in which the mass of each peakon $\varphi_{c_i}$ (and smooth peakon $\rho_{c_i}$) is concentrated. One can see that 
\begin{equation}\label{1877}
\sum_{j=-N_-\atop j\neq 0}^{N_+}\rho_{c_j}(x-x_j)=\rho_{c_i}(x-x_i)+O(e^{-L/4}),\quad\forall x\in\Omega_i,
\end{equation}
and that  $\rho_{c_i}(x-x_i)=O(e^{-L/4})$ for all $x\in\R\backslash\Omega_i$. We will decompose $\Omega_i$ as in Section \ref{sect5} by setting
\begin{equation}\label{188}
\Theta_{i}=[x_i-6.7,x_i+6.7],\text{ where }6.7\simeq\ln\left(\frac{20}{20-\sqrt{399}}\right)>\ln\sqrt{2},\text{ with }\rho_{c_i}(\pm6.7)\simeq c_i/2400.
\end{equation}

\begin{lemma}[See \cite{AK}]\label{Lemma P444}
Let $u\in L^{2}(\mathbb{R})$ satisfying \eqref{defxixi}. Denote by $M_i=\max_{x\in\Theta_i}v(x)=v(\xi_i)$ and define for $i=1,...,N_+$ the function $g_i$ by
\begin{equation}
  g_i(x)=\left\{
    \begin{aligned}
     &2v(x)+v_{xx}(x)-3v_{x}(x),~~\forall x<\xi_i,\\
     &2v(x)+v_{xx}(x)+3v_{x}(x),~~\forall x>\xi_i.\\
    \end{aligned}
  \right.
  \label{GG11}
\end{equation}
Then it holds
\begin{equation}
\int_{\mathbb{R}}g_i^{2}(x)\Phi_i(x)dx=E_i(u)-12M_i^{2}+\Vert u\Vert_{\mathcal{H}}^2O(L^{-1/2}),
\label{GG22}
\end{equation}
and
\begin{eqnarray}\label{improvmentt}
\int_{\mathbb{R}}g_i^{2}(x)\Phi_i(x)dx&=&E_i(u-\varphi_{c_i}(\cdot-\xi_i))-12\left(\frac{c_i}{6}-M_i\right)^2+\Vert u-\varphi_{c_i}(\cdot-\xi_i)\Vert_{\mathcal{H}}^2O(L^{-1/2})\nonumber \\
& \le& O(\Vert u-\varphi_{c_i}(\cdot-\xi_i)\Vert_{\mathcal{H}}^2)
\end{eqnarray}
\begin{proof}
The proof is similar to the one of Lemma 4.3 in \cite{AK} using  \eqref{de1} and  
$\vert\Phi'\vert+\vert\Phi''\vert\leq O(L^{-1/2})$  since $K=\sqrt{L}/8$. The second identity 
follows as \eqref{improvment} in Lemma \ref{Lemma P4}.\end{proof}
\end{lemma}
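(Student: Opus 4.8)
The plan is to run the integration-by-parts argument behind Lemma \ref{Lemma P4}, but with the cutoff $\Phi_i$ inserted, and to keep track of the extra terms it creates. The starting point is the pointwise algebraic identity obtained by expanding the square in \eqref{GG11} and regrouping the cross terms as exact derivatives: on $]-\infty,\xi_i[$ one gets
\[
g_i^2=(4v^2+5v_x^2+v_{xx}^2)+\frac{d}{dx}\bigl(4vv_x-6v^2-3v_x^2\bigr),
\]
while on $]\xi_i,+\infty[$, where the sign in front of $3v_x$ is reversed, the same bookkeeping gives
\[
g_i^2=(4v^2+5v_x^2+v_{xx}^2)+\frac{d}{dx}\bigl(4vv_x+6v^2+3v_x^2\bigr).
\]
Multiplying each identity by $\Phi_i$ and integrating over the corresponding half-line, the energy-density pieces reassemble into $\int_\R(4v^2+5v_x^2+v_{xx}^2)\Phi_i=E_i(u)$, and it remains to integrate by parts the two total-derivative terms.

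In that last step three contributions appear. The boundary terms at $\pm\infty$ vanish by decay of $v$; the boundary terms at $\xi_i$, using the critical-point relation $v_x(\xi_i)=0$, reduce on each side to $-6M_i^2\,\Phi_i(\xi_i)$, hence to $-12M_i^2\,\Phi_i(\xi_i)$ in total; and the terms in which the derivative falls on $\Phi_i$ are remainders. Since $K=\sqrt L/8$ forces $\|\Phi_i'\|_{L^\infty}=O(L^{-1/2})$ and $\int_\R|4vv_x\mp 6v^2\mp 3v_x^2|\lesssim\|v\|_{H^1}^2\lesssim\|u\|_{\mathcal H}^2$, each remainder is $O(L^{-1/2})\|u\|_{\mathcal H}^2$. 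Finally, because $\xi_i$ lies within $\ln\sqrt2$ of $x_i$ and therefore sits well inside the plateau $]y_i+L/8,\,y_{i+1}-L/8[$ of $\Phi_i$, estimate \eqref{de1} gives $\Phi_i(\xi_i)=1+O(e^{-\sqrt L})$; combined with $M_i^2\lesssim\|u\|_{\mathcal H}^2$ and $e^{-\sqrt L}\ll L^{-1/2}$, this turns $-12M_i^2\Phi_i(\xi_i)$ into $-12M_i^2$ up to the admissible error, which is exactly \eqref{GG22}.

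For the second identity \eqref{improvmentt} I would proceed as in Lemma \ref{Lemma P4}. Setting $w=v-\rho_{c_i}(\cdot-\xi_i)=(4-\partial_x^2)^{-1}[u-\varphi_{c_i}(\cdot-\xi_i)]$, the explicit form \eqref{1.88} of $\rho_{c_i}$ shows that $2\rho_{c_i}+\rho_{c_i}''-3\rho_{c_i}'$ vanishes on $]-\infty,0[$ and $2\rho_{c_i}+\rho_{c_i}''+3\rho_{c_i}'$ vanishes on $]0,+\infty[$, so the function $g_i$ built from $v$ coincides with the one built from $w$; moreover $\rho_{c_i}$ is even, whence $w_x(\xi_i)=v_x(\xi_i)=0$ and $\xi_i$ remains a critical point. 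Applying the already-proven \eqref{GG22} to $w$ (equivalently, to $u-\varphi_{c_i}(\cdot-\xi_i)$) and using $w(\xi_i)=M_i-\rho_{c_i}(0)=M_i-c_i/6$ yields the first line of \eqref{improvmentt}; the inequality then follows from $-12(c_i/6-M_i)^2\le0$ together with $E_i(\cdot)\le\|\cdot\|_{\mathcal H}^2$, valid since $0\le\Phi_i\le1$. I expect the only genuinely delicate point to be the error bookkeeping: one must confirm that $\xi_i$ stays inside the plateau of $\Phi_i$ and that every boundary and weight remainder is truly of size $O(L^{-1/2})$ times the relevant $\mathcal H$-norm squared, which is precisely what the scaling $K=\sqrt L/8$ and \eqref{de1} deliver.
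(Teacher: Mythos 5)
Your proof is correct and follows essentially the same route as the paper, which defers the computation to Lemma 4.3 of \cite{AK}: the weighted integration by parts of the exact-derivative part of $g_i^2$ on each side of $\xi_i$, the boundary term $-12M_i^2\Phi_i(\xi_i)$ via $v_x(\xi_i)=0$, the $O(L^{-1/2})\Vert u\Vert_{\mathcal H}^2$ remainders from $\Phi_i'$, and the substitution $w=v-\rho_{c_i}(\cdot-\xi_i)$ for \eqref{improvmentt}. (Your sign convention $2\rho_{c_i}+\rho_{c_i}''-3\rho_{c_i}'=0$ on the left half-line is the correct one, consistent with \eqref{GG4}, whereas the display in the proof of Lemma \ref{Lemma P4} carries a sign typo.)
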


\begin{lemma}[See \cite{AK}]\label{Lemma P555}
Let $u\in L^{2}(\mathbb{R})$ satisfying \eqref{defxixi}. 
 Denote by $M_i=\max_{x\in\Theta_i}v(x)=v(\xi_i)$ and define for $i=1,...,N_+$ the function $h_i$ by
\begin{equation}
  h_i(x)=\left\{
    \begin{aligned}
     &-v_{xx}-6v_{x}+16v,~~x<\xi_i,\\
     &-v_{xx}+6v_{x}+16v,~~x>\xi_i.\\
    \end{aligned}
  \right.
  \label{HH11}
\end{equation}
Then, it holds
\begin{equation}
\int_{\mathbb{R}}h_i(x)g_i^{2}(x)\Phi_i(x)dx=F_i(u)-144M_i^{3}\Phi_i(\xi_i)+\Vert u\Vert_{\mathcal{H}}^3O(L^{-1/2}).
\label{HH22}
\end{equation}
\begin{proof}
The proof is similar to the one of Lemma 4.3 in \cite{AK} using the fact that $K=\sqrt{L}/8$ and thus $\vert\Phi'\vert+\vert\Phi''\vert\leq O(L^{-1/2})$.
\end{proof}
\end{lemma}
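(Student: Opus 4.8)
The plan is to reduce \eqref{HH22} to the global identity of Lemma \ref{Lemma P5} by inserting the cut-off $\Phi_i$ and paying only for the derivative of $\Phi_i$. The starting point is the pointwise algebraic identity that already underlies Lemma \ref{Lemma P5}. Writing $u^3=-v_{xx}^3+12vv_{xx}^2-48v^2v_{xx}+64v^3$ as in \eqref{i} and expanding $h_ig_i^2$ from the definitions \eqref{GG11} and \eqref{HH11}, a direct computation shows that on each side of the switching point $\xi_i$ the top-order terms cancel: the $v_{xx}^3$ and the $v_{xx}^2$ contributions of $h_ig_i^2$ coincide with those of $u^3$, so that
\begin{equation*}
h_ig_i^2-u^3=\partial_x P_\pm \quad\text{on }\{x\lessgtr\xi_i\},
\end{equation*}
where $P_-=-72v^3+108v^2v_x-54vv_x^2+9v_x^3$ and $P_+=72v^3+108v^2v_x+54vv_x^2+9v_x^3$ depend \emph{only} on $(v,v_x)$ and not on $v_{xx}$. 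This structural cancellation is the heart of the matter: it is exactly what will allow the error to be controlled by $\|u\|_{\H}$ alone. Since $v_x(\xi_i)=0$ and $v(\xi_i)=M_i$ at the interior extremum $\xi_i$ furnished by Lemma \ref{LemmaPPloc}, one reads off $P_-(\xi_i)-P_+(\xi_i)=-144M_i^3$, which is precisely the jump producing the global identity $F(u)-144M^3=\int hg^2$ in the case $\Phi_i\equiv 1$.

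Next I would multiply this identity by $\Phi_i$, integrate over $\R$, and split the integral at $\xi_i$. The term $\int_\R u^3\Phi_i$ is, by definition \eqref{defEi}, equal to $F_i(u)$. Integrating the total-derivative terms by parts on $(-\infty,\xi_i)$ and on $(\xi_i,+\infty)$ produces, on the one hand, the boundary contributions at $\xi_i$, which combine—using that $\Phi_i$ is continuous—into $\big(P_-(\xi_i)-P_+(\xi_i)\big)\Phi_i(\xi_i)=-144M_i^3\Phi_i(\xi_i)$, the boundary terms at $\pm\infty$ vanishing because $v\in H^2(\R)$ decays; and, on the other hand, the remainder $-\int_{-\infty}^{\xi_i}P_-\,\Phi_i'-\int_{\xi_i}^{+\infty}P_+\,\Phi_i'$. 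This already gives the two main terms of \eqref{HH22}.

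It then remains to estimate this remainder. Bounding it crudely by $\|\Phi_i'\|_{L^\infty}\big(\|P_-\|_{L^1}+\|P_+\|_{L^1}\big)$, I would use that the choice $K=\sqrt{L}/8$ forces $\|\Phi_i'\|_{L^\infty}=O(L^{-1/2})$ (and likewise $\|\Phi_i''\|_{L^\infty}=O(L^{-1/2})$, should one prefer a second integration by parts), while the fact that $P_\pm$ are cubic in $(v,v_x)$ with no $v_{xx}$ yields $\|P_\pm\|_{L^1}\lesssim\|v\|_{H^2}^3\lesssim\|u\|_{\H}^3$, thanks to the norm identity $\int(4v^2+5v_x^2+v_{xx}^2)=E(u)=\|u\|_{\H}^2$ from \eqref{EE} (see also Lemma \ref{modulation}) together with the one-dimensional Sobolev embedding $H^1\hookrightarrow L^\infty$. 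This produces the announced error $\|u\|_{\H}^3\,O(L^{-1/2})$ and completes \eqref{HH22}. The genuine obstacle is the very first step: checking that the cubic remainder $h_ig_i^2-u^3$ is an exact $x$-derivative of an expression involving only $v$ and $v_x$. Had $P_\pm$ retained a true $v_{xx}$-dependence, the remainder would contain $\int|v_{xx}|^3$, which is not controlled by $\|u\|_{\H}$, and the estimate in the stated form would break down.
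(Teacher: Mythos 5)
Your proposal is correct and follows essentially the route the paper delegates to \cite{AK}: integrate by parts against the cut-off $\Phi_i$ on each side of $\xi_i$, collect the jump $\bigl(P_-(\xi_i)-P_+(\xi_i)\bigr)\Phi_i(\xi_i)=-144M_i^3\Phi_i(\xi_i)$ using $v_x(\xi_i)=0$, and bound the commutator terms by $\|\Phi_i'\|_{L^\infty}\|P_\pm\|_{L^1}=O(L^{-1/2})\|u\|_{\mathcal H}^3$. I checked the key algebraic identity $h_ig_i^2-u^3=\partial_xP_\pm$ with your stated $P_\pm$ (which indeed contain no $v_{xx}$), and it holds, so the argument is complete.
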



\begin{lemma}[Connection between the conservation laws $E_i$ and $F_i$]\label{Lemma P666}
Let $u\in L^{2}(\mathbb{R})$ satisfying  Hypothesis \ref{hyp} and \eqref{defxixi}. \\
If  
\begin{equation}\label{assum6loc}
u-6v\le  \alpha^2 \quad \text{on} \quad  [x_1-8,+\infty[
\end{equation}
and for $ i\in [[1,N_+]]$,
\begin{equation}\label{assum7loc}
  \sup_{x\in\left]y_i-\frac{L}{8},
 y_{i+1}+\frac{L}{8}\right[\setminus \Theta_{\xi_i}} (|u(x)|,|v(x)|,|v_x(x)|) \le \frac{c_i}{100}\; , 
\end{equation}
then it holds
 \begin{equation}
F_i(u)\leq 18M_iE_i(u)-72M_i^{3}+O(\alpha^4)+\Vert u\Vert_{\mathcal{H}}^3O(L^{-1/2}),\quad i=1,...,N_+.
\label{198}
\end{equation}
\end{lemma}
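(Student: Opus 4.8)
The plan is to transpose the proof of Lemma~\ref{Lemma P6} to the localized setting, the nonnegative weight $\Phi_i\ge 0$ playing the role that there allowed us to pass from a pointwise bound on $h$ to an integral bound. The whole argument hinges on the pointwise inequality
\[
h_i(x)\le 18 M_i+\alpha^2,\qquad \forall x\in\,]y_i-L/8,\,y_{i+1}+L/8[\,,
\]
the exact localized analogue of \eqref{hles18m}. Before proving it, I would record that by \eqref{defxixi}, Lemma~\ref{LemmaPP1loc} and the Young estimate \eqref{jd} (applied to $v-\sum_j\rho_{c_j}(\cdot-x_j)=\tfrac14 e^{-2|\cdot|}\ast(u-\sum_j\varphi_{c_j}(\cdot-x_j))$ together with \eqref{1877} on $\Omega_i$), the value $M_i=v(\xi_i)$ stays within $O(\alpha^{2/3})+O(e^{-L/4})$ of $\rho_{c_i}(0)=c_i/6$; in particular $M_i=O(1)$ and $\tfrac{19}{100}c_i\le 18M_i$ for $\alpha$ small and $L$ large.

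To prove the pointwise bound I would split $]y_i-L/8,y_{i+1}+L/8[$ into the three usual regions relative to $\Theta_{\xi_i}$, exactly as in Lemma~\ref{Lemma P6}. On the complement of $\Theta_{\xi_i}$ inside this window, assumption \eqref{assum7loc} gives $|u|,|v|,|v_x|\le c_i/100$, whence, using the rewriting $h_i=u-6v_x+12v$ (resp.\ $u+6v_x+12v$),
\[
h_i\le |u|+6|v_x|+12|v|\le \tfrac{19}{100}c_i\le 18M_i.
\]
On $[\xi_i-6.7,\xi_i]$ Lemma~\ref{LemmaPPloc} ensures $v_x\ge 0$, so $h_i=u-6v_x+12v\le u+12v=(u-6v)+18v$; since $\Theta_{\xi_i}\subset[x_1-8,+\infty[$ and $\xi_i$ is the unique point of maximum of $v$ on the relevant window, \eqref{assum6loc} ($u-6v\le\alpha^2$) together with $v\le M_i$ yield $h_i\le 18M_i+\alpha^2$. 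The half $[\xi_i,\xi_i+6.7]$, where $v_x\le 0$, is treated identically.

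With the pointwise bound in hand, I would integrate against the nonnegative measure $g_i^2\Phi_i\,dx$. Splitting $\R$ at the endpoints of $]y_i-L/8,y_{i+1}+L/8[$, the inner part is bounded by $(18M_i+\alpha^2)\int_\R g_i^2\Phi_i$, while on the outer part $\Phi_i\le 2e^{-\sqrt L}$ by \eqref{de2} and, since $u\in H^1\hookrightarrow L^\infty$ makes $h_i\in L^\infty$ and $g_i\in L^2$, this contributes only $O(e^{-\sqrt L})\|u\|_{\H}^3=\|u\|_{\H}^3O(L^{-1/2})$. Using \eqref{HH22}, then $\Phi_i(\xi_i)=1+O(e^{-\sqrt L})$ from \eqref{de1} with $M_i=O(1)$, this gives
\[
F_i(u)\le 144M_i^3+(18M_i+\alpha^2)\int_\R g_i^2\Phi_i+O(\alpha^4)+\|u\|_{\H}^3O(L^{-1/2}).
\]
Now \eqref{improvmentt}, together with the concentration of $\Phi_i$ on the $i$th bump (the other bumps contributing only $O(e^{-L/4})$), shows $\int_\R g_i^2\Phi_i=O(\alpha^2)+\|u\|_{\H}^2O(L^{-1/2})$, so the term $\alpha^2\int_\R g_i^2\Phi_i$ is absorbed into $O(\alpha^4)+\|u\|_{\H}^3O(L^{-1/2})$, while \eqref{GG22} turns $18M_i\int_\R g_i^2\Phi_i$ into $18M_iE_i(u)-216M_i^3+\|u\|_{\H}^3O(L^{-1/2})$. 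Since $-216M_i^3+144M_i^3=-72M_i^3$, this is exactly \eqref{198}. The main obstacle is the pointwise estimate $h_i\le 18M_i+\alpha^2$: it simultaneously requires the one-sided sign of $v_x$ on each half of $\Theta_{\xi_i}$ (Lemma~\ref{LemmaPPloc}), the inclusion $\Theta_{\xi_i}\subset[x_1-8,+\infty[$ so that \eqref{assum6loc} applies on the whole of $\Theta_{\xi_i}$, and the proximity of $M_i$ to $c_i/6$; the remaining difficulty is purely bookkeeping, keeping every exponential and $L^{-1/2}$ error subordinate to $O(\alpha^4)+\|u\|_{\H}^3O(L^{-1/2})$.
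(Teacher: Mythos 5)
Your proposal is correct and follows essentially the same route as the paper: the pointwise bound $h_i\le 18M_i+\alpha^2$ on $\Omega_i$ obtained by splitting into $\Omega_i\setminus\Theta_{\xi_i}$ (where \eqref{assum7loc} gives $h_i\le \tfrac{19}{100}c_i\le 18M_i$) and the two halves of $\Theta_{\xi_i}$ (where the sign of $v_x$ and \eqref{assum6loc} give $h_i\le 18M_i+\alpha^2$), followed by integration against $g_i^2\Phi_i$ and the localized identities \eqref{GG22}, \eqref{HH22}, \eqref{improvmentt}. The bookkeeping of the $O(\alpha^4)$ and $\Vert u\Vert_{\mathcal{H}}^3O(L^{-1/2})$ errors matches the paper's.
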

\begin{proof}
Recall that, according to Subsection \ref{sec66},  $ v=(4-\partial_x^2)^{-1} u $ has a got a unique global maximum
$ \xi_i $ on  $\xi_i\in]y_i+L/8,y_{i+1}-L/8[$ for $ i\in [[1,N_+]]$. , it follows from \eqref{de1} that $\Phi_i(\xi_i)=1+O(e^{-\sqrt{L}})$. Combining this with $K=\sqrt{L}/8$, \eqref{GG22} and \eqref{HH22} one may deduce that
\begin{equation}
\int_{\mathbb{R}}g_i^{2}(x)\Phi_i(x)dx=E_i(u)-12M_i^{2}+\Vert u\Vert_{\mathcal{H}}^2O(L^{-1/2}),
\label{GG222}
\end{equation}
and
\begin{equation}
\int_{\mathbb{R}}h_i(x)g_i^{2}(x)\Phi_i(x)dx=F_i(u)-144M_i^{3}+\Vert u\Vert_{\mathcal{H}}^3O(L^{-1/2}).
\label{HH222}
\end{equation}
Now, in view of \eqref{de2}  and \eqref{u0uinf} it holds
$$
\left\vert\int_{\R\setminus\Omega_i}  h_i(x)g_i^{2}(x)\Phi_i(x)dx\right\vert=\|u\|_{\H} ^2 (\|u\|_{L^\infty} 
+\|u\|_{\H}) O(e^{-\sqrt{L}/8})= \|u\|_{\H}^3 O(e^{-\sqrt{L}/8})\; .
$$
It thus remains to  show that the function $ h_i $ defined in Lemma \ref{Lemma P555} satisfies $h_i\leq 18M_i+\alpha^2$ on $\Omega_i$.
 We divide $ \Omega_i $ into three intervals.  
If $x\in\Omega_i\setminus\Theta_{\xi_i}$, then using \eqref{assum6loc}, it holds
\begin{align}
h_i(x)&\le \vert u(x)\vert+6\vert v_x(x)\vert+12\vert v(x)\vert\le\frac{19c_i}{100}\le 18M_i.
\label{hles18mm}
\end{align}
If $\xi_i-6.7<x<\xi_i$, then $v_x\geq0$ and using that $u-6v\leq\alpha^2$, we get 
\begin{equation}
h_i(x)\leq18M_i+\alpha^2.
\end{equation}
If $\xi_i<x<\xi_i+6.7$, then $v_x\leq0$ and using that $u-6v\leq\alpha^2$, we get 
$$
h_i(x)\leq18M_i+\alpha^2.
$$
 Combining \eqref{GG222}, \eqref{de2}, and \eqref{HH222}, one  deduce that
\begin{align*}
F_i(u)-144M_i^3&=\int_{\R}h_i(x)g_i^2(x)\Phi_i(x)dx+\Vert u\Vert_{\mathcal{H}}^3O(L^{-1/2})\\
&=\int_{\Omega_i}h_i(x)g_i^2(x)\Phi_i(x)dx+\Vert u\Vert_{\mathcal{H}}^3O(L^{-1/2})\\
&\leq18 M_i\Big( E_i(u)-12M_i^2\Big)+ O(\alpha^4)+\Vert u\Vert_{\mathcal{H}}^3O(L^{-1/2}),
\end{align*}
that completes the proof of the lemma.
\end{proof}

\begin{lemma}\label{lemma22}
Let $u_0\in Y$ satisfying Hypothesis 1 and \eqref{huhu0}-\eqref{distz}. It holds
\begin{align}
\big\vert E_i(u_0)-E(\varphi_{c_i})\big\vert+ \big\vert F_i(u_0)-F(\varphi_{c_i})\big\vert&\leq O(\eps^4)+O(e^{-\sqrt{L}}),\qquad i\in [[-N_{-},N_+]]\setminus \{0\}.\label{locestg}
\end{align}
\end{lemma}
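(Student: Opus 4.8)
The plan is to compare the localized functionals of $u_0$ with those of a single peakon by inserting the pure multi-peakon profile $\Sigma:=\sum_{j=-N_-,\,j\neq0}^{N_+}\varphi_{c_j}(\cdot-z_j^0)$ as an intermediate reference. I would first reduce to the case $i\in[[1,N_+]]$: for $i\in[[-N_-,-1]]$ the functionals $E_i,F_i$ are the analogous quantities localized around the $i$th antipeakon, and the estimate follows from the positive case through the reflection symmetry $u\mapsto -u(-\cdot)$, which leaves \eqref{DP} invariant and exchanges peakons and antipeakons. Then for each fixed $i\in[[1,N_+]]$ I would split
\[
|E_i(u_0)-E(\varphi_{c_i})|\le |E_i(u_0)-E_i(\Sigma)|+|E_i(\Sigma)-E(\varphi_{c_i})|,
\]
and likewise for $F_i$, treating the first (perturbative) term via the closeness \eqref{huhu} of $u_0$ to $\Sigma$ in $\H$ and the second (geometric) term via the separation $z_i^0-z_j^0\ge L$.

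For the energy, note that $E_i(w)=\int_\R(4v_w^2+5v_{w,x}^2+v_{w,xx}^2)\Phi_i$ with $v_w=(4-\partial_x^2)^{-1}w$ is the quadratic form of a nonnegative weighted inner product, since $0\le\Phi_i\le1$ and the integrand is pointwise a positive quadratic form in $(v_w,v_{w,x},v_{w,xx})$. Thus $E_i(w)\le E(w)=\|w\|_\H^2$, and Cauchy--Schwarz for the associated symmetric bilinear form $\tilde E_i$ gives
\[
|E_i(u_0)-E_i(\Sigma)|=|\tilde E_i(u_0-\Sigma,u_0+\Sigma)|\le \|u_0-\Sigma\|_\H\,(\|u_0\|_\H+\|\Sigma\|_\H)=O(\eps^4),
\]
using $\|u_0-\Sigma\|_\H\le\eps^4$ from \eqref{huhu} and $\|\Sigma\|_\H,\|u_0\|_\H=O(\|\vec c\|_1)$. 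For the geometric term, $v_\Sigma=\sum_j\rho_{c_j}(\cdot-z_j^0)$, so on $\Omega_i$ the estimate \eqref{1877} gives $v_\Sigma=\rho_{c_i}(\cdot-z_i^0)+O(e^{-L/4})$ (together with its derivatives), while \eqref{de1}--\eqref{de2} give $\Phi_i=1+O(e^{-\sqrt L})$ on $\Omega_i$ and $\Phi_i=O(e^{-\sqrt L})$ off $\Omega_i$; since $\rho_{c_i}(\cdot-z_i^0)$ and its derivatives decay exponentially away from $z_i^0$, one obtains $E_i(\Sigma)=\int_\R(4\rho_{c_i}^2+5\rho_{c_i,x}^2+\rho_{c_i,xx}^2)+O(e^{-\sqrt L})=E(\varphi_{c_i})+O(e^{-\sqrt L})$.

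For the cubic functional I would first record that $F_i(u)=\int_\R u^3\,\Phi_i\,dx$, since the density in \eqref{defEi} equals $u^3=(4v-v_{xx})^3$ pointwise. The perturbative term is then handled exactly as in Lemma \ref{Lemma P1}: factoring $u_0^3-\Sigma^3=(u_0-\Sigma)(u_0^2+u_0\Sigma+\Sigma^2)$, using $0\le\Phi_i\le1$, Hölder's inequality, \eqref{1.5} and the $L^\infty$ bounds provided by Lemma \ref{LemmaPP1loc} (which control $\|u_0\|_{L^\infty}$ and $\|\Sigma\|_{L^\infty}$ by $O(\|\vec c\|_1)$), one gets $|F_i(u_0)-F_i(\Sigma)|\le\|u_0-\Sigma\|_{L^2}\,O(\|\vec c\|_1^2)=O(\eps^4)$. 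For the geometric term, on $\Omega_i$ one has $\Sigma=\varphi_{c_i}(\cdot-z_i^0)+O(e^{-L/4})$ and $\Phi_i=1+O(e^{-\sqrt L})$, while off $\Omega_i$ either $\Phi_i$ or $\varphi_{c_i}^3$ is exponentially small, so $F_i(\Sigma)=\int_\R\varphi_{c_i}^3+O(e^{-\sqrt L})=F(\varphi_{c_i})+O(e^{-\sqrt L})$. Collecting the four estimates and using $e^{-L/4}\le e^{-\sqrt L}$ for $L\ge 16$ yields \eqref{locestg}. The only genuinely delicate bookkeeping lies in the geometric terms, where the cubic cross-products in $F_i(\Sigma)$ and the interaction tails between distinct bumps must be shown to be absorbed into $O(e^{-\sqrt L})$; this is routine given the separation hypothesis and the exponential decay of $\varphi_{c_j}$, $\rho_{c_j}$ and of $\Phi_i$.
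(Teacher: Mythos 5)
Your proof is correct and follows essentially the same route as the paper, which simply delegates to Lemma 4.7 of \cite{AK}: split off the pure profile $\Sigma$ as an intermediate reference, bound the perturbative part by $\|u_0-\Sigma\|_{\H}\le\eps^4$ (quadratically for $E_i$, via the factorization $u_0^3-\Sigma^3$ and the $L^\infty$ bounds for $F_i$), and absorb the geometric interaction and cutoff tails into $O(e^{-\sqrt{L}})$ using the separation \eqref{distz}, the decay of $\varphi_{c_i},\rho_{c_i}$ and the choice $K=\sqrt{L}/8$. The identity $-v_{xx}^3+12vv_{xx}^2-48v^2v_{xx}+64v^3=(4v-v_{xx})^3=u^3$ and the reflection symmetry for $i<0$ are exactly the ingredients the paper relies on.
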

\begin{proof}
 It follows easily from  \eqref{huhu0}-\eqref{distz}, \eqref{defEi}, the exponential decay of $\varphi_{c_i}$ and $\Phi_i$ 
  and the choice $ K=\sqrt{L}/8 $  (see Lemma 4.7 in \cite{AK} for details).
\end{proof}

\subsection{End of the proof of Theorem \ref{stabantipeakonpeakon}}
  For $ i\in [[1,N_+]$ we set  $M_i=v\big(T,\xi_i(T)\big) $ and $\delta_i=c_i/6-M_i$. It is worth recalling that Lemma \ref{LemmaPPloc} 
 ensures that for $ 1\le i\le N_+ $, $v(T,\xi_i(T))=\max_{[y_i(T),y_{i+1}(T)]} v(T,\cdot) $, where the $ y_i$'s are defined in 
 \eqref{defyi}.  For a function $f\colon \R_{+}\longrightarrow \R$, we set
$$
\Delta_0^Tf=f(T)-f(0).
$$
Summing \eqref{198} over $i\in\{1,...,N_+\}$, we get
 $$
\sum_{i=1}^{N_+}\Delta_0^TF_i(u)\leq18\sum_{i=1}^{N_+}M_i\Delta_0^TE_i(u)+ \sum_{i=1}^{N_+}\Big[-72M_i^{3}+18M_iE_i(u_0)-F_i(u_0)\Big]\\
+O(\alpha^4)+O(L^{-1/2}) 
$$
that can be rewritten  after some computations as
 \begin{multline}
 \sum_{i=1}^{N_+}\Big[M_i^{3}-\frac{1}{4}M_iE(\varphi_{c_i})+\frac{1}{72}F(\varphi_{c_i})\Big]\leq \frac{1}{4}\sum_{i=1}^{N_+}
 \Bigl( M_i \Delta_0^TE_i(u)-\frac{1}{18}\Delta_0^TF_i(u)\Bigr) \\
+\frac{1}{4} \sum_{i=1}^{N_+}M_i\big\vert E_i(u_0)-E(\varphi_{c_i})\big\vert+\frac{1}{72} \sum_{i=1}^{N_+}\big\vert F_i(u_0)-F(\varphi_{c_i})\big\vert+O(\alpha^4)+O(L^{-1/2}).
\label{211}
\end{multline}
Using Abel transformation, the fact that $E(\varphi_{c_i})=c_i^2/3$, $F(\varphi_{c_i})=2c_i^3/3$ and  definition \eqref{defjk}, (noticing that $0\leq1/18M_1<2/3c_{1}$ ) we obtain 
 \begin{multline}
 \sum_{i=1}^{N_+}\delta_i^2\left[\frac{c_i}{2}-\delta_i\right]\leq \frac{1}{4}M_1 \Delta_0^T\mathcal{J}_{1,\frac{1}{18M_1},K}
 +\frac{1}{4} \sum_{i=2}^{N_+}\Big(M_i-M_{i-1}\Big)\Delta_0^T\mathcal{J}_{i,0,K}\\
+\frac{1}{4} \sum_{i=1}^{N_+}M_i\big\vert E_i(u_0)-E(\varphi_{c_i})\big\vert+\frac{1}{72} \sum_{i=1}^{N_+}\big\vert F_i(u_0)-F(\varphi_{c_i})\big\vert+O(\alpha^4)+O(L^{-1/2}).
\label{211'}
\end{multline}
 Now, in view of Lemma \eqref{defxi4} and \eqref{1877}
$$
M_i=\frac{c_i}{6}+O(e^{-L/4})+O(\alpha),
$$
and thus for $0<\alpha<\alpha_0(\vec{c})\ll1$  small enough and $ L>L_0\gg1$ large enough, it holds 
\begin{equation}\label{MM}
0<M_{1}<...<M_{N_+}\qquad\text{and}\qquad\delta_i<c_i/4,\qquad \text{with }i=1,...,N_+.
\end{equation}
Combining \eqref{locestg}, \eqref{211'}, \eqref{MM} and \eqref{monotonicityestim}, we obtain
\begin{equation}\label{deltai}
\sum_{i=1}^{N_+}|c_i\delta_i|\leq O(\eps^2+L^{-1/4}) \; .
\end{equation}

Now, it is again crucial to note that  (D-P) is invariant by the change of unknown $ u(t,x)\mapsto \tilde{u}(t,x)=-u(t,-x) $. 
 As in the proof of Proposition \ref{prop5} it is clear that $ \tilde{u}(0,\cdot)=-u_0(-\cdot)$ satisfies Hypothesis \ref{hyp}  with $ \tilde{x}_0=-x_0 $ and then $ \tilde{x}_0(t)=-x_0(t) $ for all $t\ge 0$.
 $\tilde{u} $ satisfies \eqref{defxi} on $ [0,T] $ with $ N_-$ and $ N_+ $ respectively replaced by
  $ \tilde{N}_-=N_+ $ and $ \tilde{N}_+=N_- $,  $ x_i(t) $ replaced by $ \tilde{x}_i(t)=-x_{-i}$ and $ c_i $ replaced by 
  $\tilde{c}_i=-c_{-i} $ . Also we notice that the definition of $ T_\alpha $ is symmetric in $ c_1$ and $-c_{-1} $ so that 
 $ \tilde{u} $ also satisfies \eqref{kj22} with $ v$ replaced by $\tilde{v} $ and $ x_1(t)$ replaced by $ \tilde{x}_1(t) $.
  Therefore, applying the above procedure  for $ \tilde{u}  $ we obtain as well that 
  \begin{equation}\label{deltaineg}
 \sum_{i=1}^{N_{-}}  |c_{-i} (c_{-i}/6-M_{-i})|=\sum_{i=1}^{\tilde{N}_+}|\tilde{c}_i (\tilde{c}_i/6-\tilde{M}_i )|\leq O(\eps^2+L^{-1/4}) \; ,
  \end{equation}
  with $ \tilde{M}_i=-M_{-i} $ where $ M_{-i}=v(T,\xi_{-i})=\min_{\Omega_i} v(T,\cdot) $. 
 
 To conclude the proof we need the following estimate on the left-hand side member of \eqref{203}.
 \begin{lemma}\label{lemma222}
For  any $u_0\in L^2(\R) $ satisfying  \eqref{huhu}-\eqref{distz}, it holds
\begin{align}
\Big\vert E(u_0)-\sum_{i=-N_{-}}^{N_+}E(\varphi_{c_i})\Big\vert&\leq O(\eps^4)+O(e^{-L/2})\label{zaq}\
\end{align}
\end{lemma}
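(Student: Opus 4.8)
The plan is to expand the quadratic functional $E$ around the train of profiles and to control the resulting cross terms. Set $\Phi:=\sum_{j=-N_-\atop j\neq 0}^{N_+}\varphi_{c_j}(\cdot-z_j^0)$ and $w:=u_0-\Phi$, so that \eqref{huhu} reads $\|w\|_{\mathcal{H}}\le\delta\le\eps^4$. By \eqref{1.5} the map $\|\cdot\|_{\mathcal{H}}=\sqrt{E(\cdot)}$ is a Hilbert norm on $L^2(\R)$ equivalent to $\|\cdot\|_{L^2}$, so $E$ is the square of this norm; denoting by $(\cdot,\cdot)_{\mathcal{H}}$ the associated inner product, the polarization identity gives
$$
E(u_0)=E(\Phi)+2\,(\Phi,w)_{\mathcal{H}}+E(w)\; .
$$

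First I would dispose of the two terms containing $w$. By definition $E(w)=\|w\|_{\mathcal{H}}^2\le\delta^2\le\eps^8=O(\eps^4)$, while the Cauchy--Schwarz inequality yields $|(\Phi,w)_{\mathcal{H}}|\le\|\Phi\|_{\mathcal{H}}\,\|w\|_{\mathcal{H}}\le\|\Phi\|_{\mathcal{H}}\,\eps^4$. Since $\|\Phi\|_{\mathcal{H}}^2=E(\Phi)$ is bounded by a constant depending only on $\vec{c}$, uniformly in $L\ge L_0$ (as the next step shows), this term is $O(\eps^4)$ as well. Hence $|E(u_0)-E(\Phi)|\le O(\eps^4)$.

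It then remains to evaluate $E(\Phi)$, which is precisely the computation \eqref{205}--\eqref{206} performed in the proof of Lemma \ref{lemma20}. Using $(1-\partial_x^2)\varphi_{c_i}(\cdot-z_i^0)=2c_i\delta_{z_i^0}$ and the definition \eqref{1.88} of $\rho_{c_i}$, one obtains
$$
E(\Phi)=\frac{1}{3}\sum_{i=-N_-}^{N_+}c_i^2+2\sum_{i=-N_-}^{N_+}c_i\sum\limits_{\substack{j=-N_- \\ j \neq i}}^{N_+}\rho_{c_j}(z_i^0-z_j^0)\; .
$$
Since \eqref{distz} forces $z_i^0-z_j^0\ge L$ for $i>j$, the exponential decay of $\rho_{c_j}$ (see \eqref{1.88}) bounds the double sum by $O(\|\vec{c}\|_1 e^{-2L/3})=O(e^{-L/2})$, exactly as in \eqref{206}. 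Recalling that $E(\varphi_{c_i})=c_i^2/3$, this gives $E(\Phi)=\sum_{i=-N_-}^{N_+}E(\varphi_{c_i})+O(e^{-L/2})$, and combining with the previous paragraph establishes \eqref{zaq}.

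There is essentially no hard step here: the statement is a direct quadratic expansion that recycles the interaction estimate \eqref{206} already proved for Lemma \ref{lemma20}. The only points requiring a little care are that the implicit constants in $O(\eps^4)$ are allowed to depend on $\vec{c}$ (through $\|\Phi\|_{\mathcal{H}}$), and that the interaction bound $O(e^{-2L/3})$ is uniform over the admissible configurations of shifts, which is guaranteed by \eqref{distz}. Alternatively, \eqref{zaq} can be read off directly from the global quadratic identity \eqref{203} taken at $z_i=z_i^0$, by bounding $\|u_0-\Phi\|_{\mathcal{H}}^2\le\eps^8$ and $|v(z_i^0)-c_i/6|=O(\eps^4)+O(e^{-L/2})$, the latter following from $\|(4-\partial_x^2)^{-1}w\|_{L^\infty}\lesssim\|w\|_{\mathcal{H}}$ together with \eqref{206}.
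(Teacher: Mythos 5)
Your proof is correct and follows essentially the route the paper intends (the paper only sketches it, deferring to Lemma 4.7 of \cite{AK}): a quadratic expansion of $E$ around the train of peakons, with the remainder and cross term controlled by $\|u_0-\Phi\|_{\mathcal{H}}\le\eps^4$ and the interaction terms controlled by the same computation \eqref{205}--\eqref{206} used for Lemma \ref{lemma20}. No gap to report.
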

\begin{proof}
 It follows easily from  \eqref{huhu}-\eqref{distz} and  the exponential decay of $\rho_{c_i}
 =(4-\partial_x^2)^{-1} \varphi_{c_i}$ (see Lemma 4.7 in \cite{AK} for details).
\end{proof}

Gathering \eqref{deltai}-\eqref{deltaineg}, Lemma \ref{lemma20} and \eqref{zaq} with $ \delta\le \eps^4$ we obtain that there exists $ C>0 $ only depending on 
  $ \vec{c} $ such that 
$$
\left\Vert u(T)-\sum\limits_{\substack{ i=-N_- \\  i \neq 0}}^{ N_+}\varphi_{c_i}(\cdot-\xi_i(T))\right\Vert _{\mathcal{H}}\leq   C(\eps+L^{-1/8}),
$$
and \eqref{kkks} holds by choosing $B=C\vee 1$.

\section{Appendix}
\subsection{Proof of Lemma \ref{lemvir}}
Identity \eqref{163} is a simplified version of the one derived in \cite{AK} Appendix $4.4$. We start by applying the operator $(4-\partial_x^2)^{-1}(\cdot)$ on the both sides of equation \eqref{DP} and using the fact that
\begin{equation}\label{1..32}
(4-\partial_x^2)^{-1}(1-\partial_x^2)^{-1}(\cdot)=\frac{1}{3}(1-\partial_x^2)^{-1}(\cdot)-\frac{1}{3}(4-\partial_x^2)^{-1}(\cdot),
\end{equation}
we infer that $v=(4-\partial_x^2)^{-1} u $ satisfies
\begin{equation}\label{vthx}
v_t+\frac{1}{2}h_x=0,
\end{equation}
where $ h=(1-\partial_x^2)^{-1} u^2 $. 
With this identity in hand one may check that
\begin{align*}
4\frac{d}{dt}\int_{\R}v^2gdx=8\int_{\R}vv_tgdx=-4\int_{\R}vh_xgdx \; .
\end{align*}
Since $\partial_x^2(1-\partial_x^2)^{-1}(\cdot)=-(\cdot)+(1-\partial_x^2)^{-1}(\cdot)$, \eqref{vthx} then leads to 
\begin{align*}
5\frac{d}{dt}\int_{\R}v_x^2gdx&=10\int_{\R}v_xv_{xt}gdx=-5\int_{\R}v(1-\partial_x^2)^{-1}\partial_x^2(u^2)gdx=5\int_{\R}u^2v_xgdx-5\int_{\R}v_xhgdx\\
&=5\int_{\R}u^2v_xgdx+5\int_{\R}vh_xgdx+5\int_{\R}vhg'dx \; .
\end{align*}
Moreover in the same way one may write
\begin{align*}
\frac{d}{dt}\int_{\R}v_{xx}^2gdx&=2\int_{\R}v_{xx}v_{xxt}gdx=-\int_{\R}v_{xx}(1-\partial_x^2)^{-1}\partial_x^3(u^2)gdx=\int_{\R}\partial_x(u^2)v_{xx}gdx-\int_{\R}v_{xx}h_xgdx\\
&=A_1+A_2,
\end{align*}
where since $v_{xx}=u-4v$, it holds
\begin{align*}
A_1=-\int_{\R}\partial_x(u^2)ugdx+4\int_{\R}\partial_x(u^2)vgdx=\frac{2}{3}\int_{\R}u^3g'dx-4\int_{\R}u^2v_xgdx-4\int_{\R}u^2vg'dx
\end{align*}
and 
\begin{align*}
A_2=\int_{\R}v_x(1-\partial_x^2)^{-1}\partial_x^2(u^2)gdx+\int_{\R}v_xh_xg'dx=-\int_{\R}u^2v_xgdx+\int_{\R}v_xhgdx+\int_{\R}v_xh_xg'dx.
\end{align*}
Gathering the above identities, \eqref{163} follows. We now concentrate on the proof of \eqref{164}. Using equation \eqref{DP3} one may write
\begin{equation}
\frac{d}{dt}\int_{\R}u^3gdx=-\frac{3}{2}\int_{\R}u^2(u^2)_xgdx-\frac{9}{2}\int_{\R}u^2h_xgdx=I_1+I_2.
\end{equation}
First, by integration by parts one may have
$$
I_1=\frac{3}{4}\int_{\R}u^4g'dx.
$$
Second, substituting $u^2$ by $h-h_{xx}$ and integrating by parts we get 
\begin{align*}
I_2&=-\frac{9}{2}\int_{\R}hh_xgdx+\frac{9}{2}\int_{\R}h_xh_{xx}gdx=\frac{9}{4}\int_{\R}(h^2-h_x^2)g'dx
\end{align*}
that proves \eqref{164}. Finally, \eqref{go2} can be deduced  directly from \eqref{DP2} by integrating by parts in the following way :
 \begin{eqnarray*}
 \frac{d}{dt}\int_{\R}  y g \, dx & = &-\int_{\R} \partial_x (y u) g -3 \int_{\R} y u_x g \nonumber \\
 & = & \int_{\R} y u g' -3\int_{\R} (u-u_{xx}) u_x g \nonumber \\
  &= &  \int_{\R} y u g' +\frac{3}{2} \int_{\R} (u^2-u_x^2) g'  \; . \end{eqnarray*}
\providecommand{\href}[2]{#2}

\end{document}